\tikzset{shorten <>/.style={shorten >=#1,shorten <=#1}}
\newcommand{\Inv}{\operatorname{Inv}}
\newcommand{\cC}{{\mathcal C}}
\newcommand{\cZ}{{\mathcal Z}}
\newcommand{\ot}{{\otimes}}
\newcommand{\ca}{{\mathcal{C} }}
\newcommand{\TY}{{\mathcal{TY}}}
\newcommand{\Ker}{\mbox{\rm Ker\,}}
\newcommand{\ku}{{\Bbbk}}
\newcommand{\Z}{{\mathbb Z}}
\newcommand{\id}{\operatorname{id}}
\newcommand{\cB}{\mathcal{B}}
\newcommand{\cD}{\mathcal{D}}
\newcommand{\Vect}{\mbox{\rm Vect\,}}
\newcommand{\Fun}{\operatorname{Fun}}
\newcommand{\Rep}{\operatorname{Rep}}
\newcommand{\Irr}{\operatorname{Irr}}
\newcommand{\Hom}{\operatorname{Hom}}
\newcommand{\End}{\operatorname{End}}
\newcommand{\Id}{\operatorname{Id}}
\newcommand{\Tr}{\operatorname{Tr}}
\newcommand{\Pic}[1]{\operatorname{Pic}(#1)}
\newcommand{\Aut}{\operatorname{Aut}}
\theoremstyle{plain}
\numberwithin{equation}{section}
\newtheorem{theorem}{Theorem}[section]
\newtheorem{lemma}[theorem]{Lemma}
\newtheorem{corollary}[theorem]{Corollary}
\newtheorem{proposition}[theorem]{Proposition}
\theoremstyle{definition}
\newtheorem{definition}[theorem]{Definition}
\newtheorem{example}[theorem]{Example}
\theoremstyle{remark}
\newtheorem{remark}[theorem]{Remark}
\theoremstyle{remark}
\newcommand{\parcen}[1]{\phantom{(}#1\phantom{)}}
\newcommand{\br}{to [out=90,in=-90]}
\newcommand{\onebox}[3]{
\begin{scope}[line width=1]
\draw (0,0) node[below] {$#1$} --(0,2) node[above] {$#2$};
\draw[fill=white] (-.33,.66) rectangle node {$#3$} (.33,1.33);
\end{scope}
}
\newcommand{\onecirc}[3]{
\begin{scope}[line width=1]
\draw (0,0) node[below] {$#1$} --(0,2) node[above] {$#2$};
\draw[fill=white] (0,1) circle (.45) node {$#3$};
\end{scope}
}
\newcommand{\twobox}[5]{
\begin{scope}[line width=1]
\draw (0,0) node[below] {$#1$} --(0,2) node[above] {$#2$};
\draw (1,0) node[below] {$#3$} --(1,2) node[above] {$#4$};
\draw[fill=white] (-.33,.66) rectangle node {\small $#5$} (1.33,1.33);
\end{scope}
}
\newcommand{\twoboxnotop}[3]{
\begin{scope}[line width=1]
\draw (0,0) node[below] {$#1$} --(0,1);
\draw (1,0) node[below] {$#2$} --(1,1);
\draw[fill=white] (-.33,.66) rectangle node {\small$#3$} (1.33,1.33);
\end{scope}
}
\newcommand{\twoboxnobottom}[3]{
\begin{scope}[line width=1]
\draw (0,1) --(0,2) node[above] {$#1$};
\draw (1,1) --(1,2) node[above] {$#2$};
\draw[fill=white] (-.33,.66) rectangle node {\small $#3$} (1.33,1.33);
\end{scope}
}
\newcommand{\twoboxnostrands}[1]{
\begin{scope}[line width=1]
\draw[fill=white] (-.33,.66) rectangle node {\small $#1$} (1.33,1.33);
\end{scope}
}
\newcommand{\gcocyc}[4]{
 \begin{tikzpicture}[line width=1, scale=1.25]
  \foreach \x in {0,1.5}{
  \draw(\x,0)--(\x,2);
  }
  \draw[fill=white] (-.25,.75) rectangle (1.75,1.25); 
  \draw (0,0) node[below] {$\lambda(#2,#3)$};
  \draw (1.5,0) node[below] {$\lambda(#1,#2#3)$};
  \draw (0,2) node[above] {$\lambda(#1,#2)^{g_3}$};
  \draw (1.5,2) node[above] {$\lambda(#1#2,#3)$};
  \draw (.75,1) node {$#4_{#1,#2,#3}$};
  \end{tikzpicture}}
\newcommand{\assocrev}[4]{
\begin{tikzpicture}[baseline=75*#4,line width=1, scale=#4]

 \foreach \x in {0,2,4}{
  \draw(\x,0)--(\x,2);
  \draw(\x,5)--(\x,5.5);
  }
  \foreach \y in {-2,-4}
 {
 \draw(\y,0)--(\y,5.5);
 }
  
   \begin{scope}[scale=2,xshift=-1cm,yshift=2.5cm]
  \braid[number of strands=3] a_1;
  \end{scope}
  
   \draw[fill=white] (1.25,1.25) rectangle node {$#1,#2,#3$} (4.5,2.5);
   
     \draw (-4,0) node[below] {$\parcen{#1}$};
     \draw (-2,0) node[below] {$\parcen{#2}$};
   \draw (0,0) node[below] {$\parcen{#3}$};
 \draw (2,0) node[below] {$(#2,#3)$};
\draw (4.25,0) node[below] {$(#1,#2#3)$};

   \draw (-4,5.5) node[above] {$\parcen{#1}$};
     \draw (-2,5.5) node[above] {$\parcen{#2}$};
   \draw (0,5.5) node[above] {$(#1,#2)$};
 \draw (2,5.5) node[above] {$\parcen{#3}$};
\draw (4.25,5.5) node[above] {$(#1#2,#3)$};

  \end{tikzpicture}
  }
\newcommand{\assocfull}[5]{
\begin{tikzpicture}[baseline=75*#5,line width=1, scale=#5]
 \foreach \x in {0,2,4}{
  \draw(\x,0)--(\x,2);
  \draw(\x,5)--(\x,5.5);
  }
  \foreach \y in {-2,-4}
 {
 \draw(\y,0)--(\y,5.5);
 }
   \begin{scope}[scale=2,xshift=-1cm,yshift=2.5cm]
  \braid[number of strands=3] a_1^{-1};
  \end{scope}
   \draw[fill=white] (1.25,1.25) rectangle node {$#4_{#1,#2,#3}$} (4.5,2.5);
     \draw (-4,0) node[below] {$\parcen{X_{#1}}$};
     \draw (-2,0) node[below] {$\parcen{Y_{#2}}$};
   \draw (0,0) node[below] {$\parcen{Z_{#3}}$};
 \draw (2,0) node[below] {$\lambda(#2,#3)$};
\draw (4.75,0) node[below] {$\lambda(#1,#2#3)$};
   \draw (-4,5.5) node[above] {$\parcen{X_{#1}}$};
     \draw (-2,5.5) node[above] {$\parcen{Y_{#2}}$};
   \draw (0,5.5) node[above] {$\lambda(#1,#2)$};
 \draw (2,5.5) node[above] {$\parcen{Z_{#3}}$};
\draw (4.75,5.5) node[above] {$\lambda(#1#2,#3)$};
  \end{tikzpicture}
  }
\newcommand{\assocrevfull}[5]{
\begin{tikzpicture}[baseline=75*#5,line width=1, scale=#5]
 \foreach \x in {0,2,4}{
  \draw(\x,0)--(\x,2);
  \draw(\x,5)--(\x,5.5);
  }
  \foreach \y in {-2,-4}
 {
 \draw(\y,0)--(\y,5.5);
 }
\begin{scope}[scale=2,xshift=-1cm,yshift=2.5cm]
\braid[number of strands=3] a_1;
\end{scope}
\draw[fill=white] (1.25,1.25) rectangle node {$#4_{#1,#2,#3}$} (4.5,2.5);
\draw (-4,0) node[below] {$\parcen{X_{#1}}$};
\draw (-2,0) node[below] {$\parcen{Y_{#2}}$};
\draw (0,0) node[below] {$\parcen{Z_{#3}}$};
\draw (2,0) node[below] {$\lambda(#2,#3)$};
\draw (4.75,0) node[below] {$\lambda(#1,#2#3)$};
\draw (-4,5.5) node[above] {$\parcen{X_{#1}}$};
\draw (-2,5.5) node[above] {$\parcen{Y_{#2}}$};
\draw (0,5.5) node[above] {$\lambda(#1,#2)$};
\draw (2,5.5) node[above] {$\parcen{Z_{#3}}$};
\draw (4.75,5.5) node[above] {$\lambda(#1#2,#3)$};
\end{tikzpicture}
}
\newcommand{\zestedcap}{
\draw[looseness=2] (0,0) to [out=90, in=90] (1,0);
\draw[looseness=2] (1.5,0) to [out=90, in=90] (2.5,0)  ;
}
\author[C. Delaney]{Colleen Delaney}
\address{Department of Mathematics, UC Berkeley}
\email{cdelaney@math.berkeley.edu}
\author[C. Galindo]{C\'esar Galindo}
\address{ Departamento de Matem\'aticas, Universidad de los Andes, Bogot\'a, Colombia}
\email{cn.galindo1116@uniandes.edu.co}
\author[J. Plavnik]{Julia Plavnik}
\address{Department of Mathematics, Indiana University}
\email{jplavnik@iu.edu}
\author[E. Rowell]{Eric C. Rowell}
\address{Department of Mathematics, Texas A\&M University}
\email{rowell@math.tamu.edu}
\author[Q. Zhang]{Qing Zhang}
\address{Department of Mathematics, Purdue University}
\email{zhan4169@purdue.edu}
\begin{document}
\title{$G$-crossed braided zesting}
\begin{abstract} For a finite group $G$, a $G$-crossed braided fusion category is $G$-graded fusion category with additional structures, namely a $G$-action and a $G$-braiding.  
We develop the notion of $G$-crossed braided zesting: an explicit method for constructing new $G$-crossed braided fusion categories from a given one by means of cohomological data associated with the invertible objects in the category and grading group $G$.  This is achieved by adapting a similar construction for (braided) fusion categories recently described by the authors. All $G$-crossed braided zestings of a given category $\cC$ are $G$-extensions of their trivial component and can be interpreted in terms of the homotopy-based description of Etingof, Nikshych and Ostrik. In particular, we explicitly describe which $G$-extensions correspond to $G$-crossed braided zestings. 
\end{abstract}
\thanks{The research of E.R. was partially supported NSF grants DMS-1664359, DMS-2000331 and DMS-2205962. The research of J.P. was partially supported by NSF grants DMS-1917319 and DMS-2146392. J.P. was partially supported by Simons Foundation Award 889000 as part of the Simons Collaboration on Global Categorical
Symmetries. C.D. was supported by NSF Mathematical Sciences Postdoctoral Research Award 2002222 and the Simons Collaboration on Global Categorical Symmetries. C.G. was supported by the School of Science of Universidad de los Andes, grant INV-2020-105-2074.}
\maketitle
\section{Introduction}
Methods for constructing fusion categories from a given category $\cC$ with some kind of finite group symmetry abound: $G$-equivariantization \cite{EGNObook}, gauging \cite{BBCW,CGPW}, $G$-extensions \cite{ENO}, $G$-de-equivariantizations \cite{MugerGalois} and, more recently, zesting \cite{DGPRZ}. These constructions have a wide range of applications, including fusion rule categorification \cite{LPR}, classification problems \cite{NRWW}, topological invariants \cite{Cui}, models for topological phases of matter \cite{BBCW} and many more. In this article we add to this collection of tools by adapting the notion of zesting to $G$-crossed braided fusion categories.

One application of our results is the explicit construction of many non-trivial examples of $G$-crossed braided fusion categories.  These, in turn, have numerous important applications.  In condensed matter physics they model symmetry enriched topological phases of matter \cite{BBCW} when the trivial component is modular or super-modular: here the non-trivial components model defects. In topology, $G$-crossed braided fusion categories can be used as input for constructing $(3+1)$-dimensional topological quantum field theories \cite{Cui}. 
In the case the trivial component is modular these also yield $(2+1)$-dimensional homotopy quantum field theories \cite{TuraevHomotopyBook}. Moreover $G$-crossed fusion categories yield fusion $2$-categories (see \cite{Cui,DR}), thus elevating them as important examples in higher algebra.

Braided and associative zesting were formalized in \cite{DGPRZ} as procedures for constructing new fusion categories from a given  $G$-graded fusion category $\cC$. In most cases one further assumes that $\cC$ is braided so that the grading group is an abelian group $A$.  The first step of associative zesting is to twist the tensor product component-wise by a $2$-cocycle $\lambda\in Z^2(A,\Inv(\cC_e))$, where the action of $A$ is trivial.  This new tensor product can be made associative provided the image of $\lambda$ under the Pontryagin-Whitehead function is trivial.
In this case one adjusts the associativity constraint component-wise by a 3-cochain $\nu\in C^3(A,\Bbbk^\times)$ that depends on $\lambda$, and all such $3$-cochains form a torsor over $H^3(A,\Bbbk^\times)$.  Thus one obtains a new fusion category $\cC^{(\lambda,\nu)}$ with the same objects and grading as $\cC$ but potentially different fusion rules and associativity constraints. One may go one step further to determine if $\cC^{(\lambda,\nu)}$ can be equipped with a braiding by adjusting the braiding on $\cC$ by an isomorphism $t:\lambda(a,b)\rightarrow\lambda(b,a)$ and an additional $1$-cochain $j\in C^1(A,\Aut_\ot(\Id_\cC))$.\footnote{In most common situations $j$ may be taken to be trivial \cite[Corollary 4.16]{DGPRZ}.}  If so, one has a braided zesting $\cC^{(\lambda,\nu,t,j)}$, see Definition \ref{def:braidedzesting} for details.  

In this work we generalize zesting to $G$-crossed braided categories $\cB$.  This requires a number of adjustments, as the grading group $G$ is no longer assumed to be abelian, the $2$-cocycle $\lambda\in Z^2(G,\Inv(\cB_e))$ may have non-trivial $G$ action, and the braiding is now a $G$-braiding. 
Although $G$-crossed zesting is a generalization of associative (and braided) zesting, it is simpler in a couple of ways.  Firstly, the final step of adjusting the braiding is unnecessary--one always obtains a $G$-braiding. In particular, the pair $(\lambda,\nu)$ is all the data necessary to specify the $G$-crossed zesting.  Secondly, the appropriate notion of equivalence is somewhat more flexible in the $G$-crossed setting.  We prove that our definition is philosophically well-motivated in that two $G$-crossed braided fusion categories are equivalent if their $G$-equivariantizations are equivalent as braided fusion categories.  Finally, since any $A$-graded braided fusion category is an $A$-crossed braided category with trivial $A$-action, we obtain many non-trivial examples of $A$-crossed braided categories via associative zesting of braided fusion categories. 
We illustrate our results with several examples.  

During the preparation of this manuscript the authors became aware of related work formulating zesting for skeletal fusion categories \cite{BarkeshliCheng2020}. The work here and in \cite{DGPRZ} was carried out independently and can be used to rederive the formulas in \cite{BarkeshliCheng2020}. See also \cite{ABK}, in which similar ideas are employed as in this paper based on the work in \cite{DGPRZ}.

Here is a more detailed outline of the contents of our paper. In Section \ref{prelim} we provide the key definitions, illustrated with familiar examples.  The new $G$-crossed zesting construction is described in Section \ref{sec:Gcrossedzesting} again with a number of examples.  We also describe how the rigidity property and pivotal structures on $\cC$ are transferred to the $G$-crossed zesting $\cC^{(\lambda,\nu)}$. Finally, in Section \ref{sec:appsandexes} we discuss the $G$-crossed modular data and how it transforms under $G$-crossed zesting. Examples are provided to illustrate. 

\subsection*{Acknowledgments} The authors gratefully acknowledge the support of the American Institute of Mathematics, where
this collaboration was initiated. Part of this
research was carried out while the authors participated in a thematic month program at
CRM, which is partially supported by NSF grant DMS-2228888. The authors benefited from conversations with Parsa Bonderson, David Aasen, Richard Ng, and Zhenghan Wang. 

\section{Preliminaries}\label{prelim}
Let $\mathds{k}$ be an algebraically closed field of characteristic zero. A $\mathds{k}$-linear abelian rigid semisimple monoidal category $(\cC, \otimes, \mathds{1}, \alpha)$ with finite isomorphism classes of simple objects is a fusion category if the tensor product $\otimes$ is bilinear on morphisms and $\End(\mathds{1}) \cong \mathds{k}$. We take the convention $\alpha_{X,Y,Z}: (X \otimes Y) \otimes Z \to X \otimes (Y \otimes Z)$ for the monoidal structure of $\cC$ and refer to the isomorphisms $\alpha$ as the {\it associators}.

A $G$-grading on a tensor category $\cC$ is a decomposition $\cC=\bigoplus_{g \in G} \cC_g$ into full abelian subcategories $\cC_g \subset \cC$ that respects the tensor product, {\it i.e.}, if $X_g \in \cC_g$ and $Y_h \in \cC_h$ then $X_g \otimes Y_h \in \cC_{gh}$. We will denote the identity element of $G$ by $e$ and refer to $\cC_e$ as the trivial component of $\cC$. We will work with only those categories which are faithfully graded, so that $\cC_g \ne 0$ for all $g\in G$. Every fusion category is faithfully graded by its {\it universal grading group}, see \cite{EGNObook} for more details.

In addition to organizing the monoidal structure of $\cC$, $G$ will act on $\cC$, in which case it is important to think of $G$ as a category $\underline{G}$ with objects $g \in G$ and morphisms $\id_g$ with a monoidal structure induced by the group multiplication. Elsewhere we will use underlines to signify the category level at which we are meant to understand an object. For example, $\underline{\Aut_{\otimes}(\cC)}$ is the category of tensor autoequivalences of $\cC$ and their tensor natural transformations while $\Aut_{\otimes}(\cC)$ is the group of isomorphism classes of tensor autoequivalences.

\subsection{\texorpdfstring{$G$}{G}-crossed braided tensor categories}

\begin{definition}
\label{def:gcrossedbraided}
A {\it right} $G$-crossed braided category is a fusion category $\cC$ with
\begin{enumerate}
\item  a $G$-grading $\cC = \oplus_{g \in G}\, \cC_g$, 
\item a {\it right} $G$-action,  that is, a monoidal functor $T: \underline{G^{op}} \to \underline{\Aut_{\otimes}(\cC)}$ such that $T_h(\cC_g) \subset \cC_{h^{-1}gh}$ for all $g,h\in G$,  
\item a $G$-braiding consisting of natural isomorphisms $c_{X,Y_h}: X \otimes Y_h \to Y_h \otimes T_h(X)$ for all $X\in \cC$, $Y_h\in \cC_h$, satisfying the right action analogues of the axioms in \cite[Definition 8.24.1 (a)-(c)]{EGNObook} (see below). 
\end{enumerate}     
\end{definition}
  
We will use the notation $X^g:=T_g(X)$ for the (right) $G$-action on objects and $f^g:=T_g(f)$ for the $G$-action on morphisms. We will denote the natural isomorphisms that define the $G$-action by $(\mu_g)_{X,Y}:(X\otimes Y)^g\cong X^g\otimes Y^g$ and $(\gamma_{g,h})_X: X^{gh} \cong (X^g)^h$. Sometimes we will refer to these isomorphisms as the {\it tensorators} and {\it compositors} of the $G$-action. 

The explicit constraints satisfied by the $G$-action and $G$-braiding are as follows, where we take $X\in\cC_g$ and define $g^h=h^{-1}gh$ for notational convenience.
\begin{enumerate}
    \item From the constraint $((X\ot Y)\ot Z)^g\cong X^g\ot(Y^g\ot Z^g)$: $$\alpha_{X^g,Y^g,Z^g}\circ((\mu_g)_{X,Y}\ot \id_{Z^g})\circ(\mu_g)_{X\ot Y,Z}=(\id_{X^g}\ot (\mu_g)_{Y,Z})\circ(\mu_g)_{X,Y\ot Z}\circ\alpha_{X,Y,Z}.$$
    \item From the constraint $X^{khg} \cong ((X^k)^h)^g$: 
    $$((\gamma_{k,h})_X)^g\circ (\gamma_{kh,g})_{X}=(\gamma_{h,g})_{X^k}\circ (\gamma_{k,hg})_X$$
    \item From the constraint $(X\ot Y)^{gh}\cong (X^g)^h\ot (Y^g)^h$: 
    $$((\gamma_{g,h})_X\ot(\gamma_{g,h})_Y)\circ (\mu_{gh})_{X,Y}=(\mu_{h})_{X^g,Y^g}\circ((\mu_g)_{X,Y})^h\circ(\gamma_{g,h})_{X\ot Y}.$$
    \item From the constraint $(X\ot Y)^g\cong Y^g\ot X^{hg}$: $$(\id_{Y^g}\ot(\gamma^{-1}_{h,g})_X)\circ (\mu_g)_{Y,X^h}\circ (c_{X,Y})^g=(\id_{Y^g}\ot(\gamma^{-1}_{g,h^g})_{X})\circ c_{X^g,Y^g}\circ (\mu_g)_{X,Y}.$$
    \item From the constraint  $(X\ot Y)\ot Z\cong Z\ot (X^k\ot Y^k)$ where $Z\in \mathcal{C}_k$:
    $$\alpha_{Z,X^k,Y^k}\circ (c_{X,Z}\ot\id_{Y^k})\circ \alpha^{-1}_{X,Z,Y^k}\circ (\id_X\ot c_{Y,Z})\circ \alpha_{X,Y,Z}=
(\id_Z\ot (\mu_k)_{X,Y})\circ c_{X\ot Y,Z}.$$
    \item From the constraint $(X\ot Y)\ot Z\cong Y\ot (Z\ot (X^h)^k)$ where $Z\in \cC_k$ and $Y\in\cC_h$:
    $$\alpha_{Y,Z,(X^h)^k} \circ (\id_{Y\ot Z}\ot(\gamma_{h,k})_X)\circ c_{X,Y\ot Z}\circ\alpha_{X,Y,Z}=(\id_Y\ot c_{X^h,Z})\circ \alpha_{Y,X^h,Z}\circ (c_{X,Y}\ot \id_Z).$$
\end{enumerate}

Axioms (5) and (6) generalize the usual hexagon axioms of a braided tensor category and are often called either the {\it $G$-crossed hexagon} or {\it septagon axioms} \cite{BBCW}.

\begin{example}\label{ex: braided as G-crossed}
Any $G$-graded braided fusion category $\ca$ is trivially a $G$-crossed braided fusion category by taking the action of $G$ to be trivial.
\end{example}

\begin{remark}
\label{rmk:trivialization}
Conversely, a $G$-crossed braided fusion category $\cC$ is braided when its $G$-action $T:\underline{G} \to \underline{\Aut_{\otimes}(\cC)}$ admits a trivialization, {\it i.e.},  a monoidal natural isomorphism from the $G$-action functor $T$ to the trivial action $\Id_{\cC}$ \cite{Galindo2022}. We give a precise definition later in Section \ref{sec:recoveringbraidedzesting}.
\end{remark}

\begin{example}
\label{ex:vecgomega}
  Let $G$ be a finite group, $\omega\in Z^3(G,\ku^\times)$ a $3$-cocycle. The fusion category $\Vect_G^\omega$ of $G$-graded $\Bbbk$-vector  spaces with associativity constraint defined via $\omega$ can be given the structure of a $G$-crossed braided fusion category as follows (see \cite{Naidu11} for the left action version).  The category $\Vect_G^\omega$ has the (universal) grading by $G$ so that each component has one simple object which we identify with $g$.  On objects the $G$-action is given by $(g)^h:=h^{-1}gh$, with 
  \begin{align*}
(\mu_g)_{h,k}=\frac{\omega(h,k,g)\omega(g,h^g,k^g)}{\omega(h,g,k^g)}\id_{{g^{-1}hkg}} &\quad \text{ and }\quad
 (\gamma_{g,h})_{k}=\frac{\omega(g,k^g,h)}{\omega(g,h,k^{gh})\omega(k,g,h)}\id_{(gh)^{-1}kgh}.
 \end{align*}
 Taking $c_{g,h}=\id_{gh}:{g}\otimes h\cong h\otimes {h^{-1}gh}$ this yields a $G$-braiding.  
\begin{remark}
We will see in Section \ref{sec:Gcrossedzesting} that the  associative zesting construction in \cite{DGPRZ} is a major source of examples of $G$-crossed braided fusion categories.
\end{remark}
\end{example}

The appropriate notion of equivalence for $G$-crossed braided fusion categories is as follows:
\begin{definition}[Adapted from Section 3.1 of \cite{Galindo2017}] \label{def:G-equiv}
Let $\cC$ and $\cD$ be $G$-crossed braided categories, with $G$-action, compositors, and braiding $(T^\cC, \gamma^\cC,c^\cC)$ and $(T^\cD, \gamma^\cD,c^\cD)$ respectively. A triple $(F,J, \eta):\cC\rightarrow\cD$ is a $G$-crossed braided functor if $(F,J):\cC\rightarrow\cD$ is a monoidal functor and $\eta_g:T_g^{\cD}\circ F\rightarrow F\circ T_g^{\cC}$ is a family of monoidal natural isomorphisms for all $g \in G$ with $\eta_e=\Id_F$ such that
\begin{enumerate}
\item $F(\cC_g)\subset \cD_g$,
\item  $F((\gamma^{\cC}_{g,h})_X)\circ(\eta_{gh})_X=(\eta_h)_{X^g}\circ ((\eta_g)_{X})^h\circ (\gamma^{\cD}_{g,h})_{F(X)},$ and
\item $J_{Y,X^h}\circ F(c_{X,Y}^\cC)= (\id_{F(Y)} \ot (\eta_h)_X )\circ c_{F(X),F(Y)}^\cD\circ J_{X,Y}$
\end{enumerate}
for all $X \in \cC$, $Y \in \cC_h$. The triple $(F,J,\eta)$ is an {\it equivalence of $G$-crossed braided categories} if $F$ is an equivalence of categories.
\end{definition}
Recall from \cite[Definition 2.7.2]{EGNObook}, \cite{BN2013} the notion of a $G$-equivariantization.
\begin{definition}
\label{def:equivariantization}

Let $\cC$ be a fusion category with $G$-action $T$. The $G$-equivariantization $\cC^G$ is the fusion category with objects $(X,\{u_g\})$ where $X$ is an object of $\cC$ and $u_g: X^g \overset{\sim}{\rightarrow} X$ are isomorphisms for each $g \in G$ satisfying
\begin{equation}
  u_{gh} = u_h \circ (u_g)^h \circ(\gamma_{g,h})_X
\end{equation}
for all $h \in G$. Morphisms $f \in \Hom_{\cC^G}((X,\{u_g\}),(Y,\{v_g\}))$ are those $f \in \Hom_\cC(X,Y)$ satisfying $f \circ u_g = v_g \circ f^g$. The tensor product  is given by $(X,\{u_g\})\otimes (Y,\{v_h\})=(X\otimes Y,\{(u\cdot v)_g\})$, where $$(u\cdot v)_g:(X\otimes Y)^g\xrightarrow{\mu_g}X^g\otimes Y^g\xrightarrow{u_g\otimes v_g} X\otimes Y$$
\end{definition}

\begin{remark}\label{RMK: braiding of equivariantion of G-crossed}
When $\cC$ is $G$-crossed braided with $G$-braiding $c_{X_g,Y_h}: X_g \otimes Y_h \to Y_h \otimes X_g^h$ the equivariantization $\cC^G$ is braided with braiding
$c^G_{(X_g, \{u_k\}),(Y_h, \{v_k\})}$ induced by the isomorphism 

\[X_g\otimes Y_h \xrightarrow{c_{X_g,Y_h}} Y_h\otimes X_g^h  \xrightarrow{\id_{Y_h} \otimes u_h} Y_h\otimes X_g. \]
See \cite{MugerGalois} for details. 
\end{remark}

The following result is essentially contained in \cite{MugerGalois}, but we include a proof for completeness.
\begin{lemma}
\label{lem:equivariantization}
The $G$-equivariantizations $\cC^G,\cD^G$ of two equivalent $G$-crossed braided categories $\cC,\cD$ are equivalent as braided fusion categories.
\end{lemma}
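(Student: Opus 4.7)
The plan is to construct the desired braided equivalence $F^G:\cC^G\to\cD^G$ directly from the given $G$-crossed braided equivalence $(F,J,\eta):\cC\to\cD$, and then verify, one axiom at a time, that all structures on $\cC^G$ and $\cD^G$ described in Definition \ref{def:equivariantization} and Remark \ref{RMK: braiding of equivariantion of G-crossed} are respected. On objects, I would set
\[
F^G(X,\{u_g\}) \;=\; (F(X),\{\widetilde u_g\}), \qquad \widetilde u_g \;=\; F(u_g)\circ(\eta_g)_X,
\]
and on morphisms $F^G(f)=F(f)$. The first thing to check is that $(F(X),\{\widetilde u_g\})$ is a genuine $G$-equivariant object, i.e.\ $\widetilde u_{gh}=\widetilde u_h\circ(\widetilde u_g)^h\circ(\gamma^\cD_{g,h})_{F(X)}$. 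Expanding the definition, this reduces, after using the naturality of $\eta_h$ and functoriality of $T^\cD_h$, exactly to condition (2) in Definition \ref{def:G-equiv} combined with the equivariance of $u_g$. A short parallel check using the naturality of $\eta_g$ shows that if $f:(X,\{u_g\})\to(Y,\{v_g\})$ is a morphism in $\cC^G$ then $F(f)$ is a morphism in $\cD^G$.

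Next I would promote $F^G$ to a monoidal functor with structure morphisms $(J^G)_{(X,u),(Y,v)}:=J_{X,Y}$. The only non-formal thing to verify is that $J_{X,Y}$ is itself a morphism in $\cD^G$ between $(F(X),\{\widetilde u_g\})\otimes(F(Y),\{\widetilde v_g\})$ and $F^G((X,u)\otimes(Y,v))$. Writing out the two composites $(\widetilde u\cdot\widetilde v)_g$ and $\widetilde{(u\cdot v)}_g$ and using the naturality of $J$ together with the monoidality of the natural isomorphism $\eta_g$ (which encodes the relation $(\eta_g)_{X\otimes Y}\circ T^\cD_g(J_{X,Y})\circ(\mu^\cD_g)^{-1}=F((\mu^\cC_g)^{-1})\circ J_{X^g,Y^g}\circ((\eta_g)_X\otimes(\eta_g)_Y)$), these two composites are intertwined by $J_{X,Y}$. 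Pentagon and unit coherence for $F^G$ then follow directly from the corresponding coherence of $(F,J)$, since the forgetful functors $\cC^G\to\cC$ and $\cD^G\to\cD$ are strict monoidal.

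I then need to check the braided condition $c^{\cD^G}_{F^G(X,u),F^G(Y,v)}\circ J_{X,Y}=J_{Y,X}\circ F(c^{\cC^G}_{(X,u),(Y,v)})$. By Remark \ref{RMK: braiding of equivariantion of G-crossed}, both braidings are built from the underlying $G$-braiding post-composed with $\id\otimes u_h$ (resp.\ $\id\otimes\widetilde u_h$) for $Y\in\cC_h$. Expanding both sides, the identity reduces to axiom (3) of Definition \ref{def:G-equiv}, namely $J_{Y,X^h}\circ F(c^\cC_{X,Y})=(\id_{F(Y)}\otimes(\eta_h)_X)\circ c^\cD_{F(X),F(Y)}\circ J_{X,Y}$, combined with naturality of $J$ to absorb the $F(u_h)$ factor into $F(u_h)\circ(\eta_h)_X=\widetilde u_h$. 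This is the main technical step; getting the interaction of $J$, $\eta$, and the equivariant braiding lined up correctly is where the bookkeeping will be heaviest.

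Finally, I would show $F^G$ is an equivalence. Since $F:\cC\to\cD$ is an equivalence of categories, it is fully faithful, and since hom spaces in $\cC^G$ and $\cD^G$ are subspaces of hom spaces in $\cC$ and $\cD$ cut out by the very same compatibility used above, $F^G$ is also fully faithful. For essential surjectivity, given $(Y,\{v_g\})\in\cD^G$, pick $X\in\cC$ with an isomorphism $\varphi:F(X)\xrightarrow{\sim}Y$; then $u_g:=F^{-1}(\varphi^{-1}\circ v_g\circ\varphi^g)\circ(\eta_g)_X^{-1}$ (interpreted via quasi-inverses) yields an equivariant object on $X$ whose image is isomorphic to $(Y,\{v_g\})$ in $\cD^G$. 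Together with the monoidal and braided properties established above, this gives the desired braided equivalence $\cC^G\simeq\cD^G$.
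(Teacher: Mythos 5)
Your construction is exactly the one the paper uses: the equivariant structure on $F(X)$ is twisted by $\eta$ via $\widetilde u_g = F(u_g)\circ(\eta_g)_X$, well-definedness follows from condition (2) of Definition \ref{def:G-equiv} plus naturality of $\eta$, monoidality from the monoidality of $\eta$, and compatibility with the braiding from condition (3). The only difference is presentational: the paper first invokes the coherence theorem to assume $\cC$, $\cD$, and $F$ are strict (so $J$ and the $\gamma$'s disappear), whereas you carry the non-strict data through explicitly; both are correct.
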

\begin{proof}
Let $(F,J,\eta):\cC\to \cD$ be a $G$-crossed braided equivalence. Using the Coherence Theorems for $G$-crossed braided categories and tensor functors, we will assume that $\cC$, $\cD$, and $F$ are strict, that is the monoidal structure and $G$-action on each category is strict and $J$ is the identity, see \cite{Galindo2017}. Thus condition (2) in Definition \ref{def:G-equiv} gives the following 1-cocycle type equation
\[
(\eta_{gh})_X=(\eta_{h})_{X^g}\circ ((\eta_g)_X)^h.
\]
This equation allows us to define a canonical tensor functor $\widetilde{F}:\cC^G\to \cD^G$  as follows:

If $(X,\{u_g\})\in \cC^G$, then $\widetilde{F}((X,\{u_g\})):=(F(X),\{F(u_g)\circ (\eta_g)_X\})\in \cD^G$. In fact,
\begin{align*}
F(u_{gh})\circ (\eta_{gh})_X &=F(u_h)\circ F(u_g^h)\circ (\eta_{gh})_X\\
&=F(u_h)\circ F(u_g^h)\circ (\eta_{h})_{X^g}\circ ((\eta_g)_X)^h\\
&=F(u_h)\circ (\eta^h)_X \circ F(u_g)^h\circ ((\eta_g)_X)^h\\
&=\big ( F(u_h)\circ (\eta^h)_{X}\big )\circ \big ( F(u_g)\circ ((\eta_g)_X) \big )^h,
\end{align*}
where the third equality follows from the naturality of $\eta$. Again, using that $\eta$ is natural and monoidal, we can see that $\widetilde{F}$ defines a strict tensor functor. The functor $\widetilde{F}$ is an equivalence because $F$ is a $G$-equivariant monoidal equivalence. 

Finally, we will check that $\widetilde{F}$ is braided. Let $(X,\{u_g\}), (Y,\{v_h\})\in \cC^G$ and recall from Remark \ref{RMK: braiding of equivariantion of G-crossed} that the braiding on $\cC^G$ is induced by $c^G_{(X,u_g),(Y, v_h)}$ via

\[X_g\otimes Y_h \xrightarrow{c_{X_g,Y_h}} Y_h\otimes X_g^h  \xrightarrow{\id_{Y_h} \otimes u_h} Y_h\otimes X_g. \]

Condition (3) in Definition \ref{def:G-equiv} is $F(c_{X_g,Y_h})=(\id_{F(Y_h)}\otimes (\eta_h)_{X_g})\circ c_{F(X_g),F(Y_h)}$. Hence

\begin{align*}
    F\left( (\id_{Y_h}\otimes u_h)\circ c_{X_g,Y_h}\right)&=\left( \id_{F(Y_h)}\otimes F(u_h)\right)\circ F(c_{X_g,Y_h})\\
    &= \left( \id_{F(Y_h)}\otimes F(u_h)\right) \circ (\id_{F(Y_h)}\otimes (\eta_h)_{X_g})\circ c_{F(X_g),F(Y_h)}\\
    &=\left( \id_{F(Y_h)}\otimes (F(u_h)\circ (\eta_h)_{X_g})\right) \circ c_{F(X_g),F(Y_h)},
\end{align*}
which shows that the functor $\widetilde{F}$ respects the braiding.
\end{proof}

\begin{example}
\label{ex:vecgomegabraidings}
Any two $G$-braidings on $\Vect_G^\omega$ with the same $G$-action (as in Example \ref{ex:vecgomega}) are equivalent. Indeed, inspecting the septagon equations we see that any two such $G$-braidings $c,c^\prime$ are related by a bicharacter, say $c_{g,h}=\beta(g,h)c^\prime_{g,h}$.
  Let $(F,J)=(\Id,\id)$ and $(\eta_g)_h=\beta(g,h)\cdot \id_{g^{-1}hg}$.   We see that Condition (1) of Definition \ref{def:G-equiv} is clearly satisfied and (3) is immediate from comparing the braidings.  For (2) we take $X=k\in G$ so the equation reduces to verifying $\beta(gh,k)=\beta(h,g^{-1}kg)\beta(g,k)=\beta(h,k)\beta(g,k)$ which follows from the fact that $\beta$ is a bicharacter.

In particular, for an abelian group $A$, any two braidings on $\Vect_A$ yield equivalent $A$-crossed braided categories, although they can be distinct as (ordinary) braided categories.  This illustrates the stark difference between braided monoidal equivalence and crossed braided equivalence: a non-degenerately braided structure on $\Vect_A$ (for $|A|$ odd, say) and the standard Tannakian symmetric braiding yield equivalent $A$-crossed braided categories.
\end{example}

\subsection{Associative and braided zesting}
\label{subsec:braidedzesting}
Given an $A$-graded braided fusion category $\mathcal{B}$, one can modify its fusion rules by a 2-cocycle $\lambda$ valued in trivially-graded invertible objects and ask when the new fusion rules admit a monoidal structure. This is the idea of an {\it associative zesting}--one adjusts the associativity maps of $\cB$ by a $3$-cochain $\nu$ to obtain a new fusion category $\cB^{(\lambda,\nu)}$ when cohomological obstructions to doing so vanish. Then one can ask if the braiding in $\cB$ can be similarly adjusted to equip $\cB^{(\lambda,\nu)}$ with the structure of a braided fusion category. This is the idea of the {\it braided zesting} construction \cite{DGPRZ}. Here we use $\Inv(\mathcal{B})$ to mean the set of invertible objects rather than the group of isomorphism classes of invertible objects in $\mathcal{B}$. 

\begin{definition}
\label{def:braidedzesting}
Let $\mathcal{B}$ be an $A$-graded braided fusion category.\footnote{In \cite{DGPRZ} we considered a slightly more general setting with no braiding on the trivial component assumed.}

\begin{enumerate}[label=(\alph*)]
    \item an {\it associative zesting datum} $(\lambda,\nu)$ of $\cB$ \footnote{
In the definition of associative zesting given in \cite[Definition 3.2(2)]{DGPRZ} we denoted $\nu_{g_1,g_2,g_3}$ by $\lambda_{g_1,g_2,g_3}$ and $t_{g_1,g_2}$ by $t(g_1,g_2).$} consists of 
    \begin{itemize}
        \item a map $\lambda:A\times A\rightarrow \Inv(\mathcal{B}_e)$ which satisfies the 2-cocycle condition when $\Inv(\mathcal{B}_e)$ is interpreted as a group and \item isomorphisms $\nu_{g_1,g_2,g_3}:\lambda(g_1,g_2) \otimes \lambda(g_1g_2,g_3) \longrightarrow \lambda(g_2,g_3) \otimes \lambda(g_1, g_2g_3)$
        satisfying the {\it associative zesting condition}
        \begin{align}
        \label{eq:associativezestingcondition}
        \begin{split}
            (\nu_{g_2,g_3,g_4} \otimes \id_{\lambda(g_1,g_2g_3g_4)})\circ (\id_{\lambda(g_2,g_3)} \otimes \nu_{g_1,g_2g_3,g_4})\circ (\nu_{g_1,g_2,g_3} \otimes \id_{\lambda(g_1g_2g_3,g_4)}) \\ = (\id_{\lambda(g_3,g_4)} \otimes \nu_{g_1,g_2,g_3g_4}) \circ (c_{\lambda(g_1,g_2),\lambda(g_3,g_4)}^{-1} \otimes \id_{\lambda(g_1g_2,g_3g_4))}) \circ (\id_{\lambda(g_1,g_2)} \otimes \nu_{g_1g_2,g_3,g_4})
            \end{split}
        \end{align}
        for all $g_1,g_2,g_3,g_4 \in A$. {Furthermore we require that $\lambda(e,g_1)=\lambda(g_1,e)=\mathds{1}$ and $\nu_{g_1,e,g_2}=1$ for all $g_1,g_2 \in A$.}
    \end{itemize}
    \item A {\it braided zesting datum}  $(\lambda,\nu, t, j)$ of $\mathcal{B}$ is an associative zesting datum $(\lambda,\nu)$ together with
    \begin{itemize}
        \item  isomorphisms $t_{g_1,g_2}: \lambda(g_1,g_2) \longrightarrow \lambda(g_2,g_1)$, and
   
    \item a function $j:A \to \Aut_{\otimes}(\id_{\mathcal{B}})$
    satisfying the {\it braided zesting conditions} 
    \begin{equation}
    \begin{split}
        \nu_{g_2,g_3,g_1} \circ ( j_{g_1}(\lambda(g_2,g_3)) \otimes t_{g_1,g_2g_3)} )\circ \nu_{g_1,g_2,g_3} \\= (t_{g_1,g_2} \otimes \id_{\lambda(g_1g_2,g_3)})\circ \nu_{g_2,g_1,g_3} \circ (t_{g_1,g_3} \otimes \id_{\lambda(g_1g_3,g_2)})
    \end{split}
    \end{equation}
    and 
    \begin{equation}
    \label{eq:braidedzestingcondition2}
    \begin{split}
        \omega(g_1,g_2)(g_3)(\nu_{g_3,g_1,g_2}^{-1} \circ (\id_{\lambda(g_1,g_2)} \otimes t_{g_1g_2,g_3}) \circ \nu_{g_1,g_2,g_3}^{-1}) \\= (t_{g_1,g_3} \otimes \id_{\lambda(g_1g_3,g_2)})\circ \nu_{g_1,g_3,g_2}^{-1} \circ (t_{g_2,g_3} \otimes \id_{\lambda(g_1,g_2g_3)}) \end{split}
    \end{equation}
    where $\omega(g_1,g_2)= \chi_{\lambda(g_1,g_2)}\circ j_{g_1g_2}\circ j_{g_1}^{-1} \circ j_{g_2}^{-1}$ for all $g_1,g_2,g_3 \in A$. Here $\chi_{\lambda(g_1,g_2)} \in \Aut(\Id_{\mathcal{B}})$ is the natural isomorphism of the identity functor defined by $\chi_{\lambda(g_1,g_2)}(X) \otimes \id_{\lambda(g_1,g_2)}=  c_{\lambda(g_1,g_2),X} \circ c_{X,\lambda(g_1,g_2)}$.
       \end{itemize}
\end{enumerate}
\end{definition}

Associative zesting data $(\lambda_1,\nu_1)$ and $(\lambda_2,\nu_2)$ are considered inequivalent if $\lambda_1$ and $\lambda_2$ correspond to non-cohomologous $2$-cocycles or if $\nu_1$ and $\nu_2$ correspond to non-cohomologous $3$-cochains.  Certainly equivalent zesting data yield zestings that are monoidally equivalent.
On the other hand, even if $\lambda_1$ and $\lambda_2$ are non-cohomologous they can give rise to equivalent categories. The same is true for braided zestings: cohomologically distinct data can yield equivalent braided categories, see Example \ref{ex:su33}.

\begin{remark}\label{rem:reverse zesting} In \cite{DGPRZ} we picked the convention that the associative zesting condition (see Eqn. \ref{pic:associativezestingcondition}) and resulting zested associator involves the {\it inverse} braiding $c^{-1}_{Z_{g_3},\lambda(g_1,g_2)}$ see Eqn. (\ref{eq:zestedassociator}). However, we could also use  $c_{\lambda(g_1,g_2),Z_{g_3}}$ in the definition--the proofs go through {\it mutatis mutandis}, calling it the {\it reverse} associative zesting.  Indeed, if $(\lambda,\nu)$ is an associative zesting data then $(\lambda,\nu^{-1})$ is a reverse associative zesting.
\end{remark}

\begin{theorem}[Proposition 4.4 in \cite{DGPRZ}]
\label{thm:braidedzesting}
Let $(\lambda,\nu, t,j)$ be a braided zesting datum of a $\mathcal{B}$ as above. Then the category $\mathcal{B}^{(\lambda,\nu,t,j)}$ is a braided fusion category equal to $\mathcal{B}$ as an abelian category but with the monoidal structure in Equation \ref{eq:zestedassociator} 
and the braiding in Equation \ref{eq:zestedbraiding}. 
\end{theorem}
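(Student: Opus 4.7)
The plan is to produce the new fusion category $\mathcal{B}^{(\lambda,\nu,t,j)}$ explicitly by specifying its tensor product, associator, and braiding on homogeneous components, and then to verify the pentagon and hexagon axioms by translating the associative and braided zesting conditions into these diagrammatic identities.

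First, on the level of abelian categories we set $\mathcal{B}^{(\lambda,\nu,t,j)} = \mathcal{B}$, and on homogeneous components we define the zested tensor product by
\[
X_{g_1} \otimes^{\lambda} Y_{g_2} \;:=\; (X_{g_1}\otimes Y_{g_2})\otimes \lambda(g_1,g_2),
\]
extended bilinearly. Because $\lambda(g_1,g_2)\in \Inv(\mathcal{B}_e)$, this tensor product is again bilinear and preserves the $A$-grading; semisimplicity, rigidity, and $\End(\mathds{1})\cong \ku$ are inherited from $\mathcal{B}$ together with the normalization $\lambda(e,g)=\lambda(g,e)=\mathds{1}$. Then I would define the zested associator $\widetilde{\alpha}$ by precomposing the original associator of $\mathcal{B}$ with $\nu_{g_1,g_2,g_3}$ and a braiding factor that moves the inserted invertible object $\lambda(g_1,g_2)$ past the third tensorand $Z_{g_3}$; schematically
\[
\widetilde{\alpha}_{X_{g_1},Y_{g_2},Z_{g_3}} = \alpha \circ (\id \otimes (c^{-1}_{Z_{g_3},\lambda(g_1,g_2)}\otimes \id)) \circ (\id \otimes \nu_{g_1,g_2,g_3})
\]
up to the obvious re-bracketings suppressed here; this is the formula referenced in Equation~(\ref{eq:zestedassociator}) of the text.

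The main step is to verify the pentagon axiom for $(\otimes^{\lambda}, \widetilde{\alpha})$. I would expand both sides over homogeneous components $X_{g_1}, Y_{g_2}, Z_{g_3}, W_{g_4}$ and simplify using only (i) the original pentagon axiom of $\mathcal{B}$, (ii) naturality of the braiding $c$ (used to slide $Z_{g_3}\otimes W_{g_4}$ past $\lambda(g_1,g_2)$), and (iii) the hexagon axioms of $\mathcal{B}$ applied to the composed braiding with $\lambda(g_1,g_2)$. After cancellations, the residual equation involves only the invertible objects $\lambda(-,-)$ and the morphisms $\nu_{-,-,-}$, and it reduces exactly to the associative zesting condition~(\ref{eq:associativezestingcondition}); this is where the crossed braiding factor $c_{\lambda(g_1,g_2),\lambda(g_3,g_4)}^{-1}$ appearing in that axiom is forced by commuting two successive $\lambda$-insertions past each other. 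The pentagon identity therefore holds if and only if $(\lambda,\nu)$ is an associative zesting datum; this shows $\mathcal{B}^{(\lambda,\nu)}$ is a fusion category.

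Next I would define the zested braiding $\widetilde{c}$ on homogeneous objects by
\[
\widetilde{c}_{X_{g_1},Y_{g_2}} \;=\; (j_{g_1}(Y_{g_2})\otimes t_{g_1,g_2})\circ c_{X_{g_1},Y_{g_2}}\otimes \id_{\lambda(g_1,g_2)},
\]
where $t_{g_1,g_2}:\lambda(g_1,g_2)\to \lambda(g_2,g_1)$ repairs the grading after braiding and $j$ provides the additional autoequivalence twist; again this is the explicit formula in~(\ref{eq:zestedbraiding}). Naturality and bilinearity are immediate from the corresponding properties of $c$, $t$, and $j$. To verify the two hexagon axioms for $(\widetilde{\alpha},\widetilde{c})$, I would again expand everything on homogeneous components and cancel all occurrences of the original $\alpha$ and $c$ using the hexagons of $\mathcal{B}$; what remains is an identity among morphisms between tensor products of $\lambda$'s, whose left-hand side is the braided zesting equation at the triple $(g_1,g_2,g_3)$ and the right-hand side is the corresponding zesting condition involving $\omega(g_1,g_2)(g_3)$ and $\chi_{\lambda(g_1,g_2)}$. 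The first hexagon corresponds to the first braided zesting condition and the second hexagon to~(\ref{eq:braidedzestingcondition2}); this is precisely why the correction factor $\chi_{\lambda(g_1,g_2)}\circ j_{g_1g_2}\circ j_{g_1}^{-1}\circ j_{g_2}^{-1}$ enters the definition of $\omega$.

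The main obstacle I anticipate is purely bookkeeping: in both the pentagon and hexagon verifications, there are many implicit re-associations between $X_{g_i}$, $Y_{g_j}$, and various $\lambda(-,-)$ factors, and one must apply the hexagon axioms of $\mathcal{B}$ to commute $Z_{g_3}$-type factors past $\lambda$-factors in the correct order, with the $c^{-1}$ in the definition of $\widetilde{\alpha}$ matching the conventions set in Remark~\ref{rem:reverse zesting}. The actual cohomological content is captured entirely by~(\ref{eq:associativezestingcondition}) and the two braided zesting conditions; once the diagrammatic conventions are fixed, the verification is mechanical, and the theorem follows.
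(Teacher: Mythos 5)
The paper states this theorem without proof, deferring entirely to Proposition 4.4 of \cite{DGPRZ}; your outline correctly reconstructs that argument by the same standard route (define the zested tensor product, associator, and braiding on homogeneous components, then check that the pentagon reduces to the associative zesting condition and the two hexagons to the two braided zesting conditions). Up to the composition-order and bracketing conventions you explicitly defer, the formulas and the reduction steps match the intended proof, so the proposal is correct and takes essentially the same approach.
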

\qed

We call the category $\mathcal{B}^{(\lambda, \nu, t,j)}$ a {\it braided zesting} of $\mathcal{B}$. We will restrict our attention to the case when $j$ is trivial -- as is the case when $A$ is the universal grading of $\mathcal{B}$ -- and simply write $(\lambda, \nu, t)$ and $\mathcal{B}^{(\lambda,\nu,t)}$. 

\begin{example}[Adapted from Example 6.3.1 in \cite{DGPRZ}]
\label{ex:su33} For the rank 10 modular category $SU(3)_3$ there are three pairwise non-isomorphic invertible objects in  $[SU(3)_3]_e$ 
 with universal grading group $\Z/3\Z$.  
The (braided) zestings of $SU(3)_3$ with respect to the universal $\Z/3\Z$ grading group were computed in \cite[Example 6.3.1]{DGPRZ}.  Each of the 3 cohomology classes $\lambda\in H^2(\Z/3\Z,\Z/3\Z)$ admits $3$ choices of $3$-cochain $\nu$ for a total of $9$ cohomologously distinct associative zestings. 
However, not all $9$ of these associative zestings admit braidings: for each such $\lambda$ there is a unique (up to coboundaries) choice of $\nu\in C^3(\Z/3\Z,\Bbbk^\times)$ so that $(\lambda,\nu)$ can be promoted to a braided zesting datum $(\lambda,\nu, t)$.  In each such case, there are in fact, 3  promotions so that we get 9 braided zestings, all of which are non-degenerate.  It should be noted that they are not inequivalent: we believe that there are only 3 inequivalent braided fusion categories among these 9.
\end{example}

\subsubsection{Diagrammatic summary of the braided zesting construction}

\begin{remark}
\label{sec:methods}
Without loss of generality we may assume that our initial $G$-crossed braided category $\cC$ is {\it strict} \cite[Theorem 1.1]{Galindo2017}: that is, $\cC$ is strict as a monoidal category and the natural transformations $\mu_g$ and $\gamma_{g,h}$ that define the $G$-action are identities. Alternatively, one may assume that $\cC$ is {\it skeletal}, in which case there is one object per isomorphism class of simple objects and bases of all $\Hom$ spaces. Then $\cC$ is described by the data $\{N^{a_gb_h}_{c_{gh}},[F^{a_gb_hc_k}_{d_{ghk}}]_{(n_{hk},\gamma,\delta); (m_{gh},\alpha,\beta)}, [R^{a_gb_h}_{c_{gh}}]_{\mu\nu}, [\mu_k(a_g,b_h;c_{gh})]_{\mu\nu}, \gamma_{h,k}(a_g)\}$ consisting of the skeletal data for $\cC$ as a fusion category together with the matrix entries of the $G$-crossed braiding, tensorators, and compositors. These symbols satisfy skeletal analogs of Equations (1)-(6) above and obey a different graphical calculus than the one used here, in which morphisms are represented as linear combinations of trivalent ribbon graphs, see \cite[Equation (277, 279, 273, 282, 286, 287)]{BBCW} respectively, in which $\mu$ is denoted by $U$ and $\gamma$ by $\eta$.  

Here we will typically assume $\cC$ is strict when building general theory. Some of the examples will feature non-strict $\cC$ (as in the earlier Example \ref{ex:vecgomega}) but they are all simple enough to analyze using the categorical notation of Definition \ref{def:gcrossedbraided}, so we will get by without writing down a more careful definition of a skeletal $G$-crossed braided fusion category.  
\end{remark}

There are two main features that make zesting a useful paradigm for studying braided fusion categories. First, Equations \ref{eq:associativezestingcondition} - \ref{eq:braidedzestingcondition2} are equalities of isomorphisms of simple objects, and can thus be interpreted as scalar equations. Second, the zesting construction can be derived entirely diagrammatically. In fact, the graphical calculus for $G$-crossed braided tensor categories is our main tool in Section \ref{sec:Gcrossedzesting}. It extends the standard graphical calculus for tensor categories (see Section 2 of \cite{DGPRZ} for a review consistent with the conventions used here) with additional pictures and local relations to accommodate the $G$-action and $G$-crossed braiding. Diagrams are to be read from the top down and the following conventions are taken when drawing the $G$-action and $G$-crossed braiding, respectively.

$$
\begin{tikzpicture}[scale=.75, baseline=20]
\onebox{Y}{X}{f}
\draw (.4,.9) node[right] {$\cdot \,g$};
\end{tikzpicture}
:=
\begin{tikzpicture}[scale=.75, baseline=20]
\onebox{Y^g}{X^g}{f^g}
\end{tikzpicture}
\qquad \qquad c_{X_g,Y_h}= \begin{tikzpicture}[scale=.75, line width=1, baseline = 20]
            \draw (0,0) node[below] {$\phantom{^h}Y_h\phantom{^h}$} to [out=90, in=-90] (1,2) node[above] {$\phantom{_g}Y_h\phantom{_g}$};
             \draw[white, line width=10] (1,0) to [out=90, in=-90] (0,2);
             \draw (1,0) node[below] {$X_g^h$} to [out=90, in=-90] (0,2) node[above] {$X_g$};
        \end{tikzpicture}$$

\begin{remark}
\label{rmk:technical}
Let $\cC$ be a fusion category. The maximal pointed fusion subcategory $\cC_{\operatorname{pt}}$ is integral, so it follows from \cite[Propositions 8.23 and 8.24]{etingof2005fusion} that $\cC_{\operatorname{pt}}$ admits a unique spherical structure such that the quantum dimension of every simple object is one. Hence from now on we will assume without loss of generality that $\dim(X)=1$ for all $X \in \Inv(\cC)$. In particular if $(\lambda,\nu)$ is an associative zesting datum we will assume that $\dim(\lambda(g_1,g_2))=1$ for all $g_1,g_2 \in G$.
\end{remark}

Moreover, we may assume the spherical structure on $\cC_{pt}$ is strict, so that $^*X=X^*$ (and $X=X^{**})$. As a result the evaluation/coevaluation morphisms of $\cC$ restricted to the image of $\lambda$ in $\cC_{pt}$ can be drawn as follows \cite[Section 2.10]{EGNObook}:
\begin{align*} 
\text{eval}_{\lambda(g_1,g_2)}= &\hspace{7.5pt} \begin{tikzpicture}[scale=1.5,line width=1,baseline=-15]
\draw[looseness=2] (0,0) node[above] {\small $\lambda(g_1,g_2)^*$} to [out=-90, in=-90] (1,0) node[above] {\small $\phantom{\lambda^{*}}\lambda(g_1,g_2)\phantom{\lambda^{*}}$};
\end{tikzpicture} \quad & \text{coeval}_{\lambda(g_1,g_2)} =& \begin{tikzpicture}[scale=1.5,line width=1,baseline=0]
\draw[looseness=2] (0,0) node[below] {\small $\phantom{\lambda^{*}}\lambda(g_1,g_2)\phantom{\lambda^{*}}$} to [out=90, in=90] (1,0) node[below] {\small $\lambda(g_1,g_2)^{*}$};
\end{tikzpicture}  \\
\text{eval}'_{\lambda(g_1,g_2)}= &\begin{tikzpicture}[scale=1.5,line width=1,baseline=-15]
\draw[looseness=2] (0,0) node[above] {\small $\phantom{\lambda^{*}}\lambda(g_1,g_2)\phantom{\lambda^{*}}$} to [out=-90, in=-90] (1,0) node[above] {\small $\lambda(g_1,g_2)^*$};
\end{tikzpicture}  \quad &  \text{coeval}'_{\lambda(g_1,g_2)} =&\hspace{7.5pt}\begin{tikzpicture}[scale=1.5,line width=1,baseline=0]
\draw[looseness=2] (0,0) node[below] {\small $\lambda(g_1,g_2)^{*}$} to [out=90, in=90] (1,0) node[below] {\small $\phantom{\lambda^{*}}\lambda(g_1,g_2)\phantom{\lambda^{*}}$};
\end{tikzpicture} \end{align*}

See the following Table \ref{table:braidedzesting} for diagrammatic overview of the braided zesting construction given in Definition \ref{def:braidedzesting} in the case where $j$ is trivial. In the last row of the table the labels $\lambda(g,h)$, $\nu_{g,h,k}$, and $t_{g,h}$ are abbreviated by suppressing $\lambda$, $\nu$, and $t$ and keeping only their indices in $G$.

\begin{table}[ht]
\label{table:braidedzesting}
\begin{minipage}{.5\textwidth}
\begin{center}
\textbf{Associative Zesting}
\end{center}
\end{minipage}%
\begin{minipage}{.5\textwidth}
\begin{center}
\textbf{Braided Zesting}
\end{center}
\end{minipage}
\hrule
\hrule
\begin{minipage}{.5\textwidth}
\begin{center}
\scalebox{.75}{\begin{tikzpicture}[line width=1,scale=4/3,baseline=20*4/3]
  \foreach \x in {0,1}{
  \draw(\x,0)--(\x,2);
  }
  \draw[fill=white] (-.25,.75) rectangle node {$\nu_{g_1,g_2,g_3}$} (1.25,1.25) ;
   \draw (-.125,2) node[above] {$\lambda(g_1,g_2)$};
      \draw (1.125,2) node[above] {$\lambda(g_1g_2,g_3)$};    
  \draw (-.125,0) node[below] { $
 \lambda(g_2,g_3)$};
   \draw (1.125,0) node[below] {$\lambda(g_1,g_2g_3)$};
   \end{tikzpicture}}
   \end{center}
   \begin{center}
   Associative zesting datum $(\lambda,\nu)$
   \end{center}
\end{minipage}%
\begin{minipage}{.5\textwidth}
\begin{center}
\scalebox{.75}{\begin{tikzpicture}[line width=1,scale=4/3, baseline=20*4/3]
\draw (0,0) node[below] {$\lambda(g_2,g_1)$} -- (0,2) node[above] {$\lambda(g_1,g_2)$};
\draw[fill=white] (0,1) circle (.6) node {\small $t_{g_1,g_2}$};
\end{tikzpicture}}
\end{center}
\begin{center}
Braided zesting datum $(\lambda, \nu, t)$
\end{center}
\end{minipage}
\hrule
\begin{minipage}{.5\textwidth}
\begin{equation}
\label{eq:zestedassociator}
\scalebox{.75}{
\assocfull{g_1}{g_2}{g_3}{\nu}{.65}
}\end{equation}
\begin{center}
Zested associator (when $\mathcal{B}$ is strict)
\end{center}
\end{minipage}%
\begin{minipage}{.5\textwidth}
\begin{equation}
\label{eq:zestedbraiding}
\scalebox{.75}{\begin{tikzpicture}[line width=1,scale = 11/8*.65, baseline=75*.65,looseness=1.5]
\draw (0,0) node[below] {$Y_{g_2}$} \br (2,4) node[above] {$Y_{g_2}$}; 
\draw[white, line width=10] (2,0) \br (0,4);
\draw (2,0) node[below] {$X_{g_1}$} \br (0,4) node[above] {$X_{g_1}$};
\draw (4,0) node[below] {$\lambda(g_2,g_1)$} -- (4,4) node[above] {$\lambda(g_1,g_2)$};
\draw[fill=white] (4,2) circle (.9) node {$t_{g_1,g_2}$};
\end{tikzpicture}}
\end{equation}
\begin{center}
Zested braiding
\end{center}
\end{minipage}
\hrule
\begin{minipage}{.5\textwidth}
\begin{equation}
\label{pic:associativezestingcondition}
   \scalebox{.75}{\begin{tikzpicture}[line width=1,scale=1.5,baseline=3cm]
  \foreach \x in {0,1,2}{
  \draw(\x,0)--(\x,4);}
  \draw[fill=white] (-.25,.75) rectangle node {$g_2,g_3,g_4$} (1.25,1.25) ;
  \draw[fill=white] (.75,1.75) rectangle node {$g_1,g_2g_3,g_4$} (2.25,2.25);
  \draw[fill=white] (-.25,2.75) rectangle node {$g_1,g_2,g_3$} (1.25,3.25) ;
  \draw (0,0) node[below] {$(g_3,g_4)$};
   \draw (1,0) node[below] {$(g_2,g_3g_4)$};
    \draw (2.25,0) node[below] {$(g_1,g_2g_3g_4)$};
    \draw (0,4) node[above] {$(g_1,g_2)$};
      \draw (1,4) node[above] {$(g_1g_2,g_3)$};    
      \draw (2.25,4) node[above] {$(g_1g_2g_3,g_4)$};
  \end{tikzpicture} =
  \begin{tikzpicture}[line width=1,scale=1.5, baseline=3cm]
   \foreach \x in {0,1,2}{
  \draw(\x,0)--(\x,1);
  \draw(\x,2.5)--(\x,4);
  }
   \begin{scope}[xshift=-1cm,yshift=2.5cm]
  \braid[number of strands=3] a_1^{-1};
  \end{scope}
   \draw[fill=white] (.75,.75) rectangle node {$g_1,g_2,g_3g_4$} (2.25,1.25);
   \draw[fill=white] (.75,2.75) rectangle node {$g_1g_2,g_3,g_4$} (2.25,3.25);
  \draw (0,0) node[below] {$(g_3,g_4)$};
   \draw (1,0) node[below] {$(g_2,g_3g_4)$};
    \draw (2.25,0) node[below] {$(g_1,g_2g_3g_4)$};
    \draw (0,4) node[above] {$(g_1,g_2)$};
      \draw (1,4) node[above] {$(g_1g_2,g_3)$};    
      \draw (2.25,4) node[above] {$(g_1g_2g_3,g_4)$};
  \end{tikzpicture}}
   \end{equation}
  \begin{center}
Associative zesting condition
\end{center}
\end{minipage}%
\begin{minipage}{.5\textwidth}
\begin{equation}
\scalebox{.75}{
\begin{tikzpicture}[line width=1, baseline=3cm]
\draw (0,0)--(0,4);
\draw (1,0)--(1,4);
\twoboxnotop{}{}{g_2,g_3,g_1}
\draw (-.25,0) node[below] {$(g_3,g_1)$};
\draw (1.25,0) node[below] {$(g_3g_1,g_2)$};
\begin{scope}[yshift=2.67cm]
\twobox{}{}{}{}{g_1,g_2,g_3}
\draw (-.25,2) node[above] {$(g_1,g_2)$};
\draw (1.25,2) node[above] {$(g_1g_2,g_3)$};
\end{scope}
\draw (2.5,2.33) node[right] {$=$};
\begin{scope}[yshift=-6.67cm]
\draw (-.25,2.33) node[left] {$\textstyle \omega(g_1,g_2;g_3)\cdot$}; 
\draw (0,2)--(0,4);
\draw (1,0)--(1,4.67);
\draw (-.25,0) node[below] {$(g_3,g_1)$};
\draw (1.25,0) node[below] {$(g_3g_1,g_2)$};
\draw (-.25,4.67) node[above] {$(g_2,g_3)$};
\draw (1.25,4.67) node[above] {$(g_1,g_2g_3)$};
\twobox{}{}{}{}{g_3,g_1,g_2}
\draw (1.675,1.25) node {$^{-1}$};
\begin{scope}[yshift=2.67cm]
\twobox{}{}{}{}{g_1,g_2,g_3}
\draw (1.675,1.25) node {$^{-1}$};
\end{scope}
\draw (2.5,2.33) node[right] {$=$};
\end{scope}
\draw[fill=white] (1,2.33) node {$\scriptstyle g_1,g_2g_3$} circle (.5);
\draw[fill=white] (1,-4.33) node {$\scriptstyle g_1g_2,g_3$} circle (.5);
\end{tikzpicture}\begin{tikzpicture}[line width=1,baseline=3cm]
\draw (1,0) node[below] {}--(1,4.67) node[above] {};
\draw (1.25,0) node[below] {$(g_3g_1,g_2)$};
\draw (1.25,4.67) node[above] {$(g_1g_2,g_3)$};
\draw (-.25,4.67) node[above] {$(g_1,g_2)$};
\draw (-.25,0) node[below] {$(g_3,g_1)$};
 \begin{scope}[yshift=1.33cm]
\twobox{}{}{}{}{g_2,g_1,g_3}
 \end{scope}
 \onecirc{}{}{\scriptstyle g_1,g_3}
  \begin{scope}[yshift=2.67cm]
  \onecirc{}{}{\scriptstyle g_1,g_2}
 \end{scope}
 \begin{scope}[yshift=-6.67cm]
   \draw (1,0)--(1,4.67);
\draw (-.25,0) node[below] {$(g_3,g_1)$};
\draw (1.25,0) node[below] {$(g_3g_1,g_2)$};
\draw (-.25,4.67) node[above] {$(g_2,g_3)$};
\draw (1.25,4.67) node[above] {$(g_1,g_2g_3)$};
\begin{scope}[yshift=1.33cm]
\twobox{}{}{}{}{g_1,g_3,g_2}
\draw (1.675,1.25) node[] {$^{-1}$};
\end{scope}
\onecirc{}{}{\scriptstyle g_1,g_3}
\begin{scope}[yshift=2.67cm]
\onecirc{}{}{\scriptstyle g_2,g_3}
\end{scope}
\end{scope}
\end{tikzpicture}}
\end{equation}
\begin{center}
Braided zesting conditions
\end{center}
\end{minipage}
\caption{Diagrammatic overview of associative and braided zesting data. Per Remark \ref{rem:reverse zesting} the crossings are opposite those used in Section \ref{sec:Gcrossedzesting}.}
\end{table}

\newpage
\section{\texorpdfstring{$G$}{G}-crossed zesting}
\label{sec:Gcrossedzesting}
The construction of a $G$-crossed braided fusion category from zesting unfolds essentially the same as in the braided case. First one modifies the fusion rules and monoidal structure (Section \ref{sec:associativegcrossedzesting}), then the ($G$-crossed action and)  braiding  (Section \ref{sec:zestedgactionbraiding}). Additional structures (pivotality, traces, etc.) and properties (rigidity, sphericality, modularity) are considered in Section \ref{sec:zestedstructures} and \ref {sec:zestedmodulardata}.
\subsection{Associative \texorpdfstring{$G$}{G}-crossed zesting}
\label{sec:associativegcrossedzesting}
We begin with a definition generalizing that of an associative zesting datum:
\begin{definition}
\label{def:gcrossedassociativezesting}
Let $\cC$ be a $G$-crossed braided category. An {\it associative $G$-crossed zesting datum} is a pair $(\lambda,\nu)$ where $\lambda:G \times G \to \Inv(\cC_e)$ is a normalized 2-cocycle with $G$-action and $\nu$ are isomorphisms
$$\nu_{g_1,g_2,g_3}: \lambda(g_1,g_2)^{g_3} \otimes \lambda(g_1g_2,g_3) \to \lambda(g_2,g_3) \otimes \lambda(g_1,g_2g_3)$$
satisfying $\nu_{g_1,e,g_2} = 1$ and
\begin{align}
\label{eq:associativeGzesting}
\begin{split}
  \left(\nu_{g_2,g_3,g_4} \otimes \id_{\lambda(g_1,g_2g_3g_4)} \right)\circ \left (\id_{\lambda(g_2,g_3)^{g_4}} \otimes \nu_{g_1,g_2g_2,g_4} \right) \circ \left ((\nu_{g_1,g_2,g_3})^{g_4} \otimes \id_{\lambda(g_1g_2g_3,g_4)} \right) \\
  =\left ( \id_{\lambda(g_3,g_4)} \otimes \nu_{g_1,g_2,g_3g_4} \right) \circ \left ( c_{\lambda(g_1,g_2)^{g_3g_4},\lambda(g_3,g_4)} \otimes \id_{\lambda(g_1g_2g_3,g_4)}\right) \circ \left ( \id_{\lambda(g_1,g_2)^{g_3g_4}} \otimes \nu_{g_1g_2,g_3,g_4}\right)
  \end{split}
\end{align}
for all $g_1,g_2,g_3,g_4 \in G$.
We represent the isomorphisms $\nu$ and the equations they satisfy diagrammatically as 
\begin{align}
    \label{fig:associativeGzesting}
\begin{tikzpicture}[baseline=0]
\gcocyc{g_1}{g_2}{g_3}{\nu}
\end{tikzpicture}
\end{align}
and
\begin{equation} 
  \label{gcrossedassoczesting}
  \begin{tikzpicture}[line width=1,xscale=1.75, yscale=1.5, baseline=2.5cm]
  \foreach \x in {0,1,2}{
  \draw(\x,0)--(\x,4);
  }
  \draw[fill=white] (-.25,.75) rectangle node {$\nu_{g_2,g_3,g_4}$} (1.25,1.25) ;
  \draw[fill=white] (.75,1.75) rectangle node {$\nu_{g_1,g_2g_3,g_4}$} (2.25,2.25);
  \draw[fill=white] (-.25,2.75) rectangle node {$\nu_{g_1,g_2,g_3}$} (1.25,3.25) ;
  \draw (1.25,3) node[right] {$\cdot g_4$};
  \draw (0,0) node[below] {$\lambda(g_3,g_4)$};
   \draw (1,0) node[below] {$\lambda(g_2,g_3g_4)$};
    \draw (2.25,0) node[below] {$\lambda(g_1,g_2g_3g_4)$};
    \draw (-.25,4) node[above] {$\lambda(g_1,g_2)^{g_3g_4}$};
      \draw (1,4) node[above] {$\lambda(g_1g_2,g_3)^{g_4}$};    
      \draw (2.25,4) node[above] {$\lambda(g_1g_2g_3,g_4)$};
  \end{tikzpicture} 
  =
  \begin{tikzpicture}[line width=1,xscale=1.75, yscale=1.5, baseline=2.5cm]
   \foreach \x in {0,1,2}{
  \draw(\x,0)--(\x,1);
  \draw(\x,2.5)--(\x,4);
  }
   \begin{scope}[xshift=-1cm,yshift=2.5cm]
  \braid[number of strands=3] a_1;
  \end{scope}
   \draw[fill=white] (.75,.75) rectangle node {$\nu_{g_1,g_2,g_3g_4}$} (2.25,1.25);
   \draw[fill=white] (.75,2.75) rectangle node {$\nu_{g_1g_2,g_3,g_4}$} (2.25,3.25);
  \draw (0,0) node[below] {$\lambda(g_3,g_4)$};
   \draw (1,0) node[below] {$\lambda(g_2,g_3g_4)$};
    \draw (2.25,0) node[below] {$\lambda(g_1,g_2g_3g_4)$};
    \draw (-.25,4) node[above] {$\lambda(g_1,g_2)^{g_3g_4}$};
      \draw (1,4) node[above] {$\lambda(g_1g_2,g_3)^{g_4}$};    
      \draw (2.25,4) node[above] {$\lambda(g_1g_2g_3,g_4)$};
  \end{tikzpicture}.
  \end{equation}
  \end{definition}
  \begin{remark}
  Observe that the crossing in the $G$-crossed associative zesting condition is the reverse of the one that appears in the associative zesting condition of \cite[Figure 2]{DGPRZ} due to the convention we have taken for defining the $G$-crossed braiding.
  \end{remark}

\begin{proposition}
\label{prop:associativegzesting}
Let $(\lambda,\nu)$ be a $G$-crossed associative zesting datum of a $G$-crossed braided fusion category $\cC$. Then $\cC$ (as an abelian category) also admits a monoidal structure with the tensor product $X_{g_1} \overset{\lambda}{\otimes} Y_{g_2} = X_{g_1} \otimes Y_{g_2} \otimes \lambda(g_1,g_2)$. 
\end{proposition}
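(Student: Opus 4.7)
The plan is to construct an explicit associator for the new tensor product and then verify the pentagon and triangle axioms by reducing to the $G$-crossed associative zesting condition \eqref{eq:associativeGzesting}. By the coherence theorem of \cite{Galindo2017} invoked in Remark~\ref{sec:methods}, I may assume without loss of generality that $\mathcal{C}$ is strict as a $G$-crossed braided category, so that $\alpha$, $\mu_g$, and $\gamma_{g,h}$ are all identities. Under this assumption, for homogeneous objects we have
\[
(X_{g_1} \overset{\lambda}{\otimes} Y_{g_2}) \overset{\lambda}{\otimes} Z_{g_3} \;=\; X_{g_1} \otimes Y_{g_2} \otimes \lambda(g_1,g_2) \otimes Z_{g_3} \otimes \lambda(g_1g_2,g_3),
\]
\[
X_{g_1} \overset{\lambda}{\otimes} (Y_{g_2} \overset{\lambda}{\otimes} Z_{g_3}) \;=\; X_{g_1} \otimes Y_{g_2} \otimes Z_{g_3} \otimes \lambda(g_2,g_3) \otimes \lambda(g_1,g_2g_3).
\]
The natural candidate for the zested associator $\tilde{\alpha}_{X_{g_1},Y_{g_2},Z_{g_3}}$ first uses the $G$-crossed braiding $c_{\lambda(g_1,g_2),Z_{g_3}} : \lambda(g_1,g_2)\otimes Z_{g_3} \to Z_{g_3}\otimes \lambda(g_1,g_2)^{g_3}$ to transport $Z_{g_3}$ across the zesting object, and then applies $\nu_{g_1,g_2,g_3}$ to identify $\lambda(g_1,g_2)^{g_3}\otimes \lambda(g_1g_2,g_3)$ with $\lambda(g_2,g_3)\otimes\lambda(g_1,g_2g_3)$. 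Diagrammatically this is exactly the strict-category version of Equation \eqref{eq:zestedassociator}, read with the opposite crossing convention noted in the definition. I extend $\tilde{\alpha}$ to all objects by biadditivity and naturality, using the decomposition $\mathcal{C}=\bigoplus_g \mathcal{C}_g$.

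The main task is to verify the pentagon axiom for $\tilde{\alpha}$ on a four-fold product $X_{g_1} \overset{\lambda}{\otimes} Y_{g_2} \overset{\lambda}{\otimes} Z_{g_3} \overset{\lambda}{\otimes} W_{g_4}$. Expanding each $\tilde{\alpha}$ in the pentagon produces two long compositions: the left-hand side moves the copies of $\lambda(\cdot,\cdot)$ through the string of objects $Y_{g_2},Z_{g_3},W_{g_4}$ using three $G$-crossed braidings and three $\nu$'s, while the right-hand side does so via a different routing with three $\nu$'s and three braidings. After using naturality of the $G$-braiding to collect all braidings into a single planar-isotopy-equivalent diagram, and using the underlying pentagon (which is trivial in strict $\mathcal{C}$), the equality reduces to an identity purely among $\lambda$'s and $\nu$'s. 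That identity is precisely the $G$-crossed associative zesting condition \eqref{gcrossedassoczesting}, once one interprets $\nu_{g_1,g_2,g_3}^{g_4}$ as the image of $\nu_{g_1,g_2,g_3}$ under $T_{g_4}$ acting on the strand that has been braided beneath $W_{g_4}$.

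The hard part will be tracking the $G$-action carefully: the braiding of $\lambda(g_1,g_2)$ past $W_{g_4}$ produces $\lambda(g_1,g_2)^{g_4}$, but this object must then be braided past $Z_{g_3}$ (which gives the superscript $g_3g_4$ seen in \eqref{eq:associativeGzesting}), and one must confirm that the composite $G$-action matches via the compositors $\gamma_{g_3,g_4}$ (which are identities in the strict setting). This is exactly where axioms (2)--(3) of Definition \ref{def:gcrossedbraided} are used to identify $(\lambda(g_1,g_2)^{g_3})^{g_4}$ with $\lambda(g_1,g_2)^{g_3g_4}$ and to commute the $G$-action with $\nu_{g_1,g_2,g_3}^{g_4}$. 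The hexagon axioms (5)--(6) guarantee that the two possible routes of braiding $\lambda$'s past $Z_{g_3}\otimes W_{g_4}$ versus $Z_{g_3}$ and then $W_{g_4}$ agree. Finally, the unit and triangle axioms follow immediately from the normalizations $\lambda(e,g)=\lambda(g,e)=\mathbf{1}$ and $\nu_{g_1,e,g_2}=1$, since these ensure that tensoring with the unit object introduces no nontrivial factors. Uniqueness of the associator up to coherence follows from biadditivity, completing the verification that $(\mathcal{C}, \overset{\lambda}{\otimes}, \tilde{\alpha}, \mathbf{1})$ is a monoidal category.
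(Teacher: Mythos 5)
Your proposal is correct and follows essentially the same route as the paper: define the zested associator by braiding $\lambda(g_1,g_2)$ past $Z_{g_3}$ via the $G$-crossed braiding and then applying $\nu_{g_1,g_2,g_3}$, reduce the pentagon to the $G$-crossed associative zesting condition by naturality of the braiding (the paper phrases this as sliding the $W_{g_4}$ strand over the $\nu_{g_1,g_2,g_3}$ box), and obtain the unit axioms from the normalizations of $\lambda$ and $\nu$. One small correction to your bookkeeping: in the pentagon the object $\lambda(g_1,g_2)$ is braided past $Z_{g_3}$ first and only then past $W_{g_4}$, which for the right action is exactly what produces the superscript $g_3g_4$ rather than $g_4g_3$.
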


\begin{proof} Without loss of generality assume that $\cC$ is strict. Define
$\alpha^{(\lambda,\nu)}_{X_{g_1},Y_{g_2},Z_{g_3}}:(X_{g_1} \overset{\lambda}{\otimes} Y_{g_2}) \overset{\lambda}{\otimes} Z_{g_3} \rightarrow X_{g_1} \overset{\lambda}{\otimes}(Y_{g_2} \overset{\lambda}{\otimes}Z_{g_3})$
by the diagram on the right-hand side below. 
   \begin{align}
   \label{eq:gcrossedzestedassociator}
 \assocrev{g_1}{g_2}{g_3}{.65} &:=&
\assocrevfull{g_1}{g_2}{g_3}{\nu}{.65}.
   \end{align}

As in Table \ref{table:braidedzesting} we will use the shorthand notation indicated above to save space. The normalization conditions satisfied by $\lambda$ and $\nu$ ensure that the left and right unit axioms hold for $\mathds{1} \in \cC$ with respect to $\alpha^{(\lambda,\nu)}$, while the pentagon axiom with respect to the $\alpha^{(\lambda,\nu)}$ takes the form of the equation
\begin{equation}
\begin{tikzpicture}[line width=1, looseness=.75, xscale=1.1,yscale=.5,baseline=60]
\draw (.75,0) node[below] {\small $\parcen{g_1}$}--(.75,8)node[above] {\small $\parcen{g_1}$};
\draw (1.25,0)node[below] {\small $\parcen{g_2}$}--(1.25,8) node[above] {\small $\parcen{g_2}$};
\draw (6,0) --(6,8);
\draw (6.5,0) node[below] {\small $(g_1,g_2g_3g_4)$};
\draw (6.25,8) node[above] {\small $(g_1g_2g_3,g_4)$};
\draw (5,0) --(5,3);
\draw[looseness=1.25] (3,0)  \br (5,8);
\draw (3,0)node[below] {\small $\parcen{g_4}$};
\draw(5,8) node[above] {\small $\parcen{g_4}$};
\draw[white, line width=10,looseness=1.25] (5,3) \br (4,6);
\draw[looseness=1.25] (5,3) \br (4,6)--(4,8) node[above] {\small $ (g_1g_2,g_3)$};
\draw (2,0)[looseness=1.25] node[below] {\small $\parcen{g_3}$} --(2,6) \br (3,8) node[above] {\small $\parcen{g_3}$};
\draw[white, line width=10] (3,6) \br (2,8);
\draw[white, line width=10,looseness=1.25] (4,2) \br (3,5);
\draw[looseness=1.25] (4,1)--(4,2) \br (3,5) --(3,6) \br (2,8) node[above] {\small $(g_1,g_2)$};
\begin{scope}[xshift=4cm]
\twoboxnotop{}{}{\scriptstyle g_2,g_3,g_4}
\draw (1,0) node[below] {\small $(g_2,g_3g_4)$};
\end{scope}
\draw (3.75,0) node[below] {\small $(g_3,g_4)$};
\begin{scope}[xshift=5cm, yshift=2cm]
\twoboxnotop{}{}{\scriptstyle g_1,g_2g_3,g_4}
\end{scope}
\begin{scope}[xshift=3cm,yshift=5cm]
\twoboxnostrands{\scriptstyle g_1,g_2,g_3}
\end{scope}
\draw (7,4) node {$=$};
\end{tikzpicture}
    \begin{tikzpicture}[line width=1, looseness=.75, xscale=1.21,yscale=.5,baseline=60]
      \draw (.75,0) node[below] {\small $\parcen{g_1}$}--(.75,8)node[above] {\small $\parcen{g_1}$};
    \draw (1.25,0)node[below] {\small $\parcen{g_2}$}--(1.25,8) node[above] {\small $\parcen{g_2}$};
    \draw (6,0) --(6,8);
    \draw (6.5,0) node[below] {\small $(g_1,g_2g_3g_4)$};
    \draw (6.25,8) node[above] {\small $(g_1g_2g_3,g_4)$};
    \draw[looseness=.75] (3,0) node[below] {\small $\parcen{g_4}$} \br (5,8) node[above] {\small $\parcen{g_4}$};
    \draw[white, line width=10,looseness=1.25] (5,4.675) \br (4,8);
    \draw[looseness=1.25] (5,4.675) \br (4,8) node[above] {\small $(g_1g_2,g_3)$};
  \draw[looseness=1.25] (2,0) node[below] {\small $\parcen{g_3}$}\br (3,8) node[above] {\small $\parcen{g_3}$};
  \draw[looseness=1.25] (4,0)  \br (5,4.675);
  \draw (3.75,0) node[below] {\small $(g_3,g_4)$};
  \draw[white, line width=10, looseness=1.25] (5,1.33) \br (2,6)--(2,8); 
    \draw[looseness=1.25] (5,1.33) \br (2,6)--(2,8) node[above] {\small $(g_1,g_2)$};
       \begin{scope}[xshift=5cm]
     \twoboxnotop{}{}{\scriptstyle g_1,g_2,g_3g_4}
     \draw (0,0) node[below] {\small $(g_2,g_3g_4)$};
    \begin{scope}[yshift=4cm]
     \twoboxnostrands{\scriptstyle g_1g_2,g_3,g_4}
 \end{scope}
  \end{scope}
    \end{tikzpicture}.
    \end{equation} After sliding the strand labeled $4$ above the box $\nu_{g_1,g_2,g_3}$ in the picture on the left, equality follows immediately from Equation \ref{fig:associativeGzesting}. Other than minding the $G$-action the argument is identical to the proof of \cite[Proposition 3.4]{DGPRZ}.
\end{proof}
We denote by $\cC^{(\lambda,\nu)}$ the monoidal category obtained from an associative $G$-crossed zesting datum $(\lambda,\nu)$. We will see later that $\cC^{(\lambda,\nu)}$ has a canonical $G$-crossed braided structure. As in Remark \ref{rmk:technical} we may assume that $\dim(\lambda(g_1,g_2))=1$ for all $g_1,g_2 \in G$.
\begin{remark}\label{remark H4 obstruction}

Given $\lambda \in Z^2(G,\Inv(\cC_e))$ and a 3-cochain $\nu \in C^3(G,\mathds{k}^{\times})$, the obstruction $O_{\lambda}$ to $\cC$ forming a monoidal category with associator equivalent to that in Equation \ref{eq:zestedassociator} is measured by the following diagram. Note that when the $G$-action is trivial one recovers the obstruction to associative zesting from \cite[Figure 5]{DGPRZ}\footnote{Actually there is an error in the crossing used in Figure 5 of the published version of \cite{DGPRZ}, it should be the reverse.}
\begin{equation}
  \begin{tikzpicture}[line width=1,xscale=1.33,yscale=.6]
  \draw (-.5,5) node[left] {$O_{\lambda}(\nu):=$};
    \draw (1,1.25) to [out=90, in=-90] (0,3.25)--(0,4);
    \draw[white, line width=10] (0,0) -- (0,1) to [out=90,in=-90] (1,2.75);
    \draw(0,0) -- (0,1) to [out=90,in=-90] (1,2.75);
    \draw (-.5,0) node[below] {\small $(g_1,g_2)^{g_3g_4}$};
    \draw (-.5,10) node[above] {\small $(g_1,g_2)^{g_3g_4}$};
    \draw (0,6)--(0,8);
    \draw (2,1.25)--(2,10);
    \draw (2.5,10) node[above] {\small $(g_1g_2g_3,g_4)$};
    \begin{scope}[xshift=1cm]
    \twoboxnotop{}{}{\scriptstyle g_1g_2,g_3,g_4}
    \draw (0,0) node[below] {\small $(g_1g_2,g_3)^{g_4}$};
    \draw (1.5,0) node[below] {\small $(g_1g_2g_3,g_4)$};
    \draw (1.5,1.25) node[] {\small $^{-1}$};
    \end{scope}
     \begin{scope}[xshift=1cm,yshift=2cm]
    \twoboxnobottom{}{}{\scriptstyle g_1,g_2,g_3g_4}
    \draw (1.5,1.25) node[] {\small $^{-1}$};
    \end{scope}
     \begin{scope}[yshift=4cm]
     \twobox{}{}{}{}{\scriptstyle g_2,g_3,g_4}
     \end{scope}
       \begin{scope}[xshift=1cm,yshift=6cm]
     \twobox{}{}{}{}{\scriptstyle g_1,g_2g_3,g_4}
     \end{scope}
       \begin{scope}[yshift=8cm]
    \draw (1.33,1) node[right] {\small $\cdot g_4$};
     \twobox{}{}{}{}{\scriptstyle g_1,g_2,g_3}
     \draw (1,2) node[above] {\small $(g_1g_2,g_3)^{g_4}$};
     \end{scope}
    \end{tikzpicture}
    \end{equation}
   
The map $O_\lambda$ enjoys identical properties to those in the non-$G$-crossed setting (\cite[Proposition 3.9]{DGPRZ}), which we now briefly recall.\footnote{Regrettably we denoted the analogue of $O_{\lambda}$ by $\nu$ in \cite{DGPRZ}.} The properties of $O_\lambda$ and its connection to $G$-crossed braided extension theory are established in \cite[Section 8.7]{ENO}, where it is explained that $O_\lambda$ is a 4-cocycle on $G$ whose cohomology class only depends on that of $\lambda$. The {\it Pontryagin-Whitehead quadratic function} is defines via
\begin{align}\label{defintion of PW}
    \text{PW}: H^2(G, \Inv(\cC_e)) &\to H^4(G, \mathds{k}^{\times})\\
     \beta &\mapsto O_\beta \notag
\end{align}
then measures whether a 2-cocycle $\lambda \in Z^2(G,\Inv(\cC_e))$ has a lifting (\cite[Definition 3.8]{DGPRZ}), {\it i.e.}, whether $\cC^{(\lambda,\nu)}$ is a fusion category. When $\text{PW}(\beta)=0$, the equivalence classes of these liftings form a torsor over $H^3(G,\mathds{k}^{\times})$.
\end{remark}

\subsection{Zested \texorpdfstring{$G$}{G}-crossed braiding and \texorpdfstring{$G$}{G}-action}
\label{sec:zestedgactionbraiding}
In the following sections we use $g,h,k$ etc. instead of $g_1,g_2,g_3$ etc. for ease of notation.
 
To see that there is a unique $G$-action and braiding on $\cC^{(\lambda,\nu)}$ up to equivalence it is helpful to have in mind the following equivalent way to look to $G$-crossed braided categories in terms of {\it central functors} $\cB \hookrightarrow \cZ(\cC)$ where $\cC_e=\cB$ is the trivial component $\cC$. We now recall that every $G$-crossed braided extension of $\mathcal{B}$ is equivalent to a \emph{central $G$-extension} of $\mathcal{B}$.

\subsubsection{From Central \texorpdfstring{$G$}{G}-extensions to \texorpdfstring{$G$}{G}-crossed braided tensor categories}\label{subsection: from central G-ext to G-crossed ext}

\begin{definition}
\label{def:centralextension}
Let $\cC$ be a (strict) fusion category and $\cB\subset \cC$  a fusion subcategory. A \emph{relative braiding} of $\cB$ in $\cC$ consists of a family of natural isomorphisms 

\[
\{c_{Z,X}:Z\otimes X\to X\otimes Z\}_{Z\in \cC, X\in \cB}
\]
such that the hexagon axioms
\begin{align}
c_{Z,X\otimes Y}=(\id_X\otimes c_{Z,Y} ) ( c_{Z,X}\otimes \id_Y), && c_{Z\otimes W,X}=(c_{Z,X}\otimes \id_W)(\id_Z\otimes c_{W,X})
\end{align}
are satisfied for all $X,Y \in \cB, Z,W\in \cC$.

An inclusion $\cB\subset \cC$ with a relative braiding is called a {\it central extension} of $\cB$. A \emph{$G$-graded central  extension} is a central extension with a faithful $G$-grading.
\end{definition}

\begin{remark}
In \cite[Definition 8.9]{DN} the authors define a central $G$-extension as a  $G$-graded extension $\cB\subset \cC$ with the structure of a central functor on the inclusion functor. A central functor on any inclusion $\cB\subset \cC$ is a braided tensor functor
$\iota :\cB\to \mathcal{Z}(\cC)$ such that $U\circ \iota =\id_{\cB}$ where $U:\mathcal{Z}(\cC)\to \cC$ is the forgetful functor.  This can be seen to be equivalent to our notion of a $G$-graded central extension so that the similar nomenclature should not cause any confusion.  Indeed, a central extension $\cB\subset \cC$ as above yields a braiding on $\cB$ and $A\mapsto (A,c_{-,A})$ defines a braided functor $i :\cB\to \mathcal{Z}(\cC)$ (Here we are defining the center via right-half-braidings.) such that $U\circ i=\id_\cB$.
 Conversely, an inclusion $\iota:\cB\to\mathcal{Z}(\cC)$ such that $U\circ\iota =\id_{\cB}$ defines a relative braiding and thus a central extension.
\end{remark}

It is clear that if $\cC$ is a $G$-crossed braided category with trivial component $\cB=\cC_e$ then the $G$-crossed braiding $c_{Z,X}:Z\otimes X\to X\otimes Z^g$ for $X\in\cC_g$ restricts to a relative braiding for $X\in\cC_e=\cB$. 
The other direction follows from \cite[Proposition 8.11]{DN}: to construct a $G$-crossed structure on $G$-graded extension it is enough to construct a central extension.  Here we include a rough explanation of how to extract a $G$-crossed structure from the structure of a central extension, as shown in the proof of {\it loc. cit.}\\

{Suppose that we have a $G$-graded central extension $\cB\subset \cC$.  The central functor $i: \mathcal{B} \to \mathcal{Z}(\cC)$ that sends $X \mapsto (X,c_{-,X})$ gives natural isomorphisms $c_{Z,X}:  Z \otimes X\overset{\sim}{\to}X \otimes Z $
for $X \in \mathcal{B}, Z \in \mathcal{C}$ satisfying hexagon axioms. In particular, each $\cC_g$ is an invertible left $\mathcal{B}$-module category with left action $Y_e \triangleright X_g := Y_e \otimes X_g$ and module associators inherited from $\cC$. 

For every pair $g,h\in G$ there are $\cB$-module equivalences $R_{g,h}$ and $L_{g,h}$ given by

\begin{align*} R_{g,h} : \, & \cC_h \to \Fun_{\mathcal{B}}(\cC_g, \cC_{gh}) \quad & \quad  L_{g,h} : \, & \cC_h \to \Fun_{\mathcal{B}}(\cC_g, \cC_{hg}) \\
&Y_h \mapsto [M_g \mapsto M_g \otimes Y_h] \quad & \quad &Y_h \mapsto [M_g \mapsto Y_h \otimes M_g] \\
\end{align*}
whose structure as $\mathcal{B}$-module functors are given by the natural isomorphisms

\begin{minipage}{.5\textwidth}
\begin{tikzcd}[scale=.8]
    R_{g,h}(X_e \otimes Y_h)(M_g)= M_g \otimes (X_e \otimes Y_h)   
    \ar[d, "\alpha^{-1}_{M_g,X_e,Y_h}"] 
    \\
     (M_g \otimes X_e) \otimes Y_h
    \ar[d, "c_{M_g,X_e} \otimes \id"] 
    \\
    (X_e \otimes M_g) \otimes Y_h 
    \ar[d, "\alpha_{X_e,M_g, Y_h}"] 
    \\
   X_e \otimes R_{g,h}(Y_h)(M_g) = X_e \otimes (M_g \otimes Y_h) 
\end{tikzcd}
\end{minipage}%
\begin{minipage}{.5\textwidth}
\begin{tikzcd}
    L_{g,h}(X_e \otimes Y_h)(M_g)= (X_e \otimes Y_h) \otimes M_g   
    \ar[d, "\alpha_{X_e,Y_h,M_g}"] 
    \\
    X_e \otimes L_{g,h}(Y_h)(M_g) = X_e \otimes (Y_h \otimes M_g).
    \end{tikzcd} \vspace{85pt}
    \end{minipage}\\

The $G$-action and $G$-braiding are given by a family of functors and natural isomorphism

\begin{align*}
 \left\{T_g: \cC_h \to \cC_{g^{-1}hg}, \  \ \ \  \ \ c_{Y_h,M_g}:Y_h\otimes M_g \to M_g\otimes T_g(Y_h)\right\}_{g,h}  
\end{align*}
defined by the following pasting diagram of $\cB$-module equivalences, which commutes up to a natural isomorphism $c$ that characterizes the $G$-action and $G$-braiding on $\cC$ up to equivalence. 
    \begin{equation}
        \label{eq:gactionfunctor}
    \begin{tikzcd}[execute at end picture={
   \node[rotate=-135] at (-.5,.25) {\small $\implies$};
   \draw (-.25,.25) node[right] {$c$};}]
    \cC_h \ar[r, "T_g"]  \ar[d, "L_{g,h}",swap] &  \cC_{g^{-1}hg} \ar[dl, "R_{g,g^{-1}hg}"]
    \\
    \Fun_{\mathcal{B}}(\cC_g, \cC_{hg}) & 
    \end{tikzcd}\end{equation}

\subsubsection{Zesting central functors}

\begin{lemma}
\label{lem:zestedcentralfunctor}
Let $\cC$ be a $G$-crossed braided category with $G$-brading $c$ and $(\lambda,\nu)$ a $G$-crossed associative zesting datum. The fusion category $\cC^{(\lambda,\nu)}$ constructed in Proposition \ref{prop:associativegzesting} has the structure of a central $G$-extension of $\cC_e$ with relative braiding $c_{X_g,Y_e}$ for all $X_g\in \cC$ and $Y_e\in \cC_e$.
\end{lemma}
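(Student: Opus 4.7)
My plan is to verify the three requirements of a central $G$-extension in turn: that $\cC^{(\lambda,\nu)}$ is faithfully $G$-graded, that its trivial component is $\cC_e$, and that the family $\{c_{X_g,Y_e}\}$ drawn from the ambient $G$-braiding satisfies the two hexagon axioms relative to the \emph{zested} associator $\alpha^{(\lambda,\nu)}$. Without loss of generality I assume $\cC$ is strict (per Remark~\ref{sec:methods}) and normalize $\lambda,\nu$ so that $\lambda(e,g)=\lambda(g,e)=\unit$ and $\nu$ equals the canonical unit isomorphism (i.e., the identity, under $\dim\lambda=1$) whenever any of its three arguments is $e$; the only nontrivially required such identity, $\nu_{g_1,e,g_2}=1$, is part of the definition, and the cases $\nu_{e,g_1,g_2}$ and $\nu_{g_1,g_2,e}$ are forced because source and target reduce, after canceling $\unit$'s, to endomorphisms of a simple invertible object.

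Grading and trivial component are immediate: since $\lambda(g,h)\in\cC_e$, the zested product $X_g\overset{\lambda}{\otimes}Y_h=X_g\otimes Y_h\otimes\lambda(g,h)$ lies in $\cC_{gh}$, and on the trivial component the normalization collapses the tensor product and associator to those of $\cC$, so $\cC^{(\lambda,\nu)}_e=\cC_e$ as fusion categories. For the candidate relative braiding I set
\[
c^{\mathrm{zest}}_{X_g,Y_e}\ :=\ c_{X_g,Y_e}\colon X_g\overset{\lambda}{\otimes}Y_e=X_g\otimes Y_e\ \longrightarrow\ Y_e\otimes X_g=Y_e\overset{\lambda}{\otimes}X_g,
\]
which is well-defined because $T_e=\Id$ and $\lambda(g,e)=\lambda(e,g)=\unit$, and is natural by naturality of $c$.

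The core observation that makes the hexagons work is that the zested associator $\alpha^{(\lambda,\nu)}_{A_a,B_b,C_c}$, as given by Equation~\eqref{eq:gcrossedzestedassociator}, degenerates whenever one of $a,b,c$ equals $e$: if $a=e$ or $b=e$ it reduces to the identity (the $\lambda$-strand it braids across $C_c$ is $\unit$ and the corresponding $\nu$ is trivial), and if $c=e$ the $\nu_{a,b,e}$-factor is trivial but the braid $c_{\lambda(a,b),Y_e}$ of $\lambda(a,b)$ past $Y_e\in\cC_e$ survives. Plugging this into the first relative-braiding hexagon for $c^{\mathrm{zest}}_{X_g,\,Y_e\overset{\lambda}{\otimes}Z_e}$ with $Y_e,Z_e\in\cC_e$, all three zested associators are identities and the equation collapses precisely to axiom~(6) of Definition~\ref{def:gcrossedbraided} specialized to $h=k=e$. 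For the second relative-braiding hexagon for $c^{\mathrm{zest}}_{X_g\overset{\lambda}{\otimes}W_k,\,Y_e}$, only the associator $\alpha^{(\lambda,\nu)}_{X_g,W_k,Y_e}$ is nontrivial and contributes exactly $\id_{X_g\otimes W_k}\otimes c_{\lambda(g,k),Y_e}$; meanwhile the left-hand side $c_{X_g\otimes W_k\otimes\lambda(g,k),\,Y_e}$, rewritten via two iterations of the ordinary hexagon axiom~(5) of Definition~\ref{def:gcrossedbraided} (with the $\cC_e$-entry in the second slot, $T_e=\Id$), factors as $(c_{X_g,Y_e}\otimes\id_{W_k}\otimes\id_{\lambda(g,k)})\circ(\id_{X_g}\otimes c_{W_k,Y_e}\otimes\id_{\lambda(g,k)})\circ(\id_{X_g\otimes W_k}\otimes c_{\lambda(g,k),Y_e})$, matching the right-hand side termwise.

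The main obstacle, and where I would spend the most care, is bookkeeping in this second hexagon: one must confirm that the \emph{only} non-identity piece of the zested associators on the triples $(X_g,W_k,Y_e)$, $(X_g,Y_e,W_k)$, $(Y_e,X_g,W_k)$ is the $c_{\lambda(g,k),Y_e}$ coming from the first triple, and that this exactly absorbs the extra $\lambda(g,k)$-strand created by expanding $X_g\overset{\lambda}{\otimes}W_k$ on the left-hand side. This is cleanest to execute diagrammatically, sliding strands as in the proof of Proposition~\ref{prop:associativegzesting}, and the argument does not require any further cohomological hypotheses beyond those already used to construct $\cC^{(\lambda,\nu)}$.
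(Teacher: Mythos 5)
Your proof is correct and follows essentially the same route as the paper's (which, after strictifying and invoking the normalization, simply asserts that axioms (5) and (6) of Definition \ref{def:gcrossedbraided} yield the relative braiding); you supply the bookkeeping the paper leaves implicit, in particular that the only surviving piece of the zested associators in the second hexagon is the crossing $c_{\lambda(g,k),Y_e}$ from $\alpha^{(\lambda,\nu)}_{X_g,W_k,Y_e}$, which is exactly absorbed by the extra factor produced by iterating axiom (5). One small repair: the identities $\nu_{e,g_1,g_2}=1$ and $\nu_{g_1,g_2,e}=1$ are not forced merely because source and target are simple invertible objects (that only makes them nonzero scalars) --- they follow from the associative $G$-crossed zesting condition \ref{eq:associativeGzesting} specialized to have one argument equal to $e$ (where the crossing degenerates since $\lambda$ is normalized), combined with $\nu_{g_1,e,g_2}=1$.
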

\begin{proof}
Without loss of generality, we may assume $\cC$ is strict so that $T_e$ is the identity as a tensor functor and $\gamma_{e,e}$ the identity. Hence equations (5) and (6) in Definition \ref{def:gcrossedbraided} directly imply that $c_{X_g,Y_e}$ defines a relative braiding. 
\end{proof}

Now let $\cC$ be a (right) $G$-crossed braided category with trivial component $\cB$ and $\cC^{(\lambda,\nu)}$ be a $G$-crossed associative zesting. By Lemma \ref{lem:zestedcentralfunctor} the tensor category $\cC^{(\lambda,\nu)}$ has a canonical central extension. Hence, we may apply the result of Section \ref{subsection: from central G-ext to G-crossed ext} to describe the $G$-crossed braided structure on $\cC^{(\lambda,\nu)}$.

We will denote by $R^{{(\lambda,\nu)}}_{g,h}$ and $L^{{(\lambda,\nu)}}_{g,h}$ the $\cB$-module functors induced by right and left multiplication in $\cC^{(\lambda,\nu)}$, which now take the form
    
\begin{align*} R^{{(\lambda,\nu)}}_{g,h} : \, & \cC^{{(\lambda,\nu)}}_h \to \Fun_{\mathcal{B}}(\cC_g^{{(\lambda,\nu)}}, \cC_{gh}^{{(\lambda,\nu)}}) \quad & \quad  L_{g,h} : \, & \cC_h^{{(\lambda,\nu)}} \to \Fun_{\mathcal{B}}(\cC_g^{{(\lambda,\nu)}}, \cC_{hg}^{{(\lambda,\nu)}}) \\
&Y_h \mapsto [M_g \mapsto M_g \otimes Y_h \otimes \lambda(g,h)] \quad & \quad &Y_h \mapsto [M_g \mapsto Y_h \otimes M_g \otimes \lambda(h,g)].
\end{align*}

The following result is stated for strict $\cC$ but applies in general after inserting the appropriate associators in the diagrams. 

\begin{theorem} 
\label{thm:gcrossedzesting} Let $\cC$ be a (strict) $G$-crossed braided category and $\cC^{(\lambda,\nu)}$ be an associative $G$-crossed zesting. Then the $G$-crossed braided structure induced by the central structure of Lemma \ref{lem:zestedcentralfunctor} is given by

\begin{description}
\item[$G$-action functors]  \begin{align*}
 T_{g}^{{(\lambda,\nu)}}: \, & \cC^{{(\lambda,\nu)}}_h \to \cC^{{(\lambda,\nu)}}_{g^{-1}hg} \\  
 & Y_h \mapsto Y_h^g \otimes \lambda (h,g) \otimes \lambda(g,g^{-1}hg)^*
       \end{align*}
       
\item[$G$-braiding] 
       \begin{align}
\label{eq:zestedgcrossedbraiding}
    c^{{(\lambda,\nu)}}_{Y_h,M_g} &= 
    \begin{tikzpicture}[baseline=75*.5,line width=1, scale=.5]
 \draw(0,0)--(0,5.5);
   \begin{scope}[yscale=3.675,xscale=2,xshift=-3cm,yshift=1.5cm]
  \braid[number of strands=2] a_1;
  \end{scope}
     \draw (-4,0) node[below] {$\parcen{M_g}$};
     \draw (-2,0) node[below] {$\parcen{Y_h^g}$};
   \draw (0,0) node[below] {$(h,g)$};
   \draw[looseness=2] (3,0) node[below] {$(g,g^{-1}hg)^*$} to [out =90,in=90] (7,0) node[below] {$(g,g^{-1}hg)$};
   \draw (-4,5.5) node[above] {$\parcen{Y_h}$};
     \draw (-2,5.5) node[above] {$\parcen{M_g}$};
   \draw (0,5.5) node[above] {$(h,g)$};
  \end{tikzpicture}.
\end{align}

\item[$G$-action tensorators] 
\begin{align} \label{eq:zestedtensorator}\mu_{h^{\lambda}}^{X,Y} & =
\begin{tikzpicture}[line width=1,scale=.5,yscale=-1,baseline =-100]
\draw (-4,0) node[above] {$\parcen{X_k^h}$} -- (-4,14)node[below] {$\parcen{X_k^h}$};
\draw (2,0) node[above] {$\parcen{Y_g^h}$} -- (2,14)node[below] {$\parcen{Y_g^h}$};
\draw (9,14) node[below] {\small $(k^h,g^h)$};
 \draw (10.5,0)--(10.5,10.5);
\draw[looseness=1.5] (8,0)--(8,1.5) \br (4,5.5)--(4,14);
\draw (3.75,14) node[below] {\small $\phantom{^h}(g,h)\phantom{^h}$};
\draw (7.5,0) node[above] {\small $(k,g)^h$};
\draw (10.5,0) node[above] {\small $(kg,h)$};
\draw[looseness=1.5] (8,5.5) to [out=-90, in=-90](5.5,5.5)--(5.5,14);
\draw (6,14)node[below] {\small $(h,g^h)^*$};
 \draw[looseness=2] (13,0) -- (13,13) to [out=90, in=90] (10.5,13)--(10.5,10.5);
 \draw (13.75,0)node[above] {\small $(h,(kg)^h)^*$};
\draw[white, line width=10, looseness=.75] (8,6.5) \br (-2,10)--(-2,14);
\draw[looseness=.75] (8,6.5) \br (-2,10)--(-2,14);
\draw (-2.25,14) node[below] {\small $\phantom{^h}(k,h)\phantom{^h}$};
\draw[white, line width=10,looseness=1] (8,11.5) to [out=-90,in=-90] (0,11.5)--(0,14);
\draw[looseness=1.] (8,14)--(8,11.5) to [out=-90,in=-90] (0,11.5)--(0,14) node[below] {\small $(h,k^h)^*$};
\draw[fill=white] (6.5,5.5) rectangle node {$k,h,g^h$} (11.5,6.5);
\draw (11.5,5.5) node[right] {\tiny$-1$};
\draw[fill=white] (6.5,.5) rectangle node {$k,g,h$} (11.5,1.5);
\draw[fill=white] (6.5,11.5) rectangle node {$h, k^h, g^h$} (11.5,12.5);
\end{tikzpicture}
\end{align}

\item[$G$-action compositors]
\begin{align}
\label{eq:zestedcompositor}\gamma_{g^{\lambda},h^{\lambda}}^{X}= 
\begin{tikzpicture}[line width=1,scale=.5,yscale=-1, baseline=-80]
\draw (-.25,0)node[above] {$X_k^{gh}$} --(-.25,12) node[below] {$X_k^{gh}$};
\draw [looseness=2](14,0)--(14,11) to [out=90, in=90] (16,11)--(16,0);
\draw (13.5,0) node[above] {$(k,gh)$};
\draw (17,0) node[above] { $(gh, k^{gh})^*$};
\draw[looseness=2] (10,10.5) --(10,11) to [out=90,in=90] (12,11)--(12,10.75);
\draw (12,9.25)-- (12,7.75);
\draw (12,6.25)-- (12,4.75);
\draw[looseness=1] (2,3.5) -- (2,3) to [out=-90,in=-90] (12,3)--(12,3.25);
\draw (2,4) --(2,12);
\draw (1.5,12) node[below] {\small $(k,g)^h$};
\draw (4,6.5)--(4,12)node[below] {\small $(g,k^g)^{h*}$};
\draw[looseness=2] (4,6.5) to [out=-90,in=-90] (6,6.5)--(6,12);
\draw (6.5,12) node[below] {\small $(k^g,h)$};
\draw[fill=white] (5.5,6.5) rectangle node {\small $g,k^g,h$} (14.5,7.5);
\draw (14.5,9.75) node[right] {\tiny$-1$};
\draw (14.5,3.5) node[right] {\tiny$-1$};
\draw[looseness=2] (8,12) --(8,9.5) to [out=-90,in=-90] (10,9.5);
\draw (9.24,12) node[below] {\small $(h,k^{gh})^*$}; 
\draw[fill=white] (9.5,9.5) rectangle node {\small $g,h,k^{gh}$} (14.5,10.5);
\draw[fill=white] (1.5,3.5) rectangle node {\small $k,g,h$} (14.5,4.5);
\end{tikzpicture}.
\end{align}
\end{description}
\end{theorem}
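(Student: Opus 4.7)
The strategy is to leverage Lemma~\ref{lem:zestedcentralfunctor}, which endows $\cC^{(\lambda,\nu)}$ with the structure of a central $G$-extension of $\cB=\cC_e$, with relative braiding inherited from the $G$-braiding of $\cC$. By the construction reviewed in Section~\ref{subsection: from central G-ext to G-crossed ext} (cf.~\cite[Proposition~8.11]{DN}), every central $G$-extension canonically determines a $G$-crossed braided structure via the pasting diagram~\eqref{eq:gactionfunctor}. Our task reduces to extracting the explicit formulas in this particular case and then verifying the resulting axioms.

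First I would determine $T_g^{(\lambda,\nu)}$ from the defining $\cB$-module equivalence $R^{(\lambda,\nu)}_{g,g^{-1}hg}\circ T_g^{(\lambda,\nu)}\simeq L^{(\lambda,\nu)}_{g,h}$ of functors $\cC_h^{(\lambda,\nu)} \to \Fun_\cB(\cC_g^{(\lambda,\nu)},\cC^{(\lambda,\nu)}_{hg})$. Evaluating both sides at $M_g$ gives $M_g\otimes T_g^{(\lambda,\nu)}(Y_h)\otimes \lambda(g,g^{-1}hg)$ and $Y_h\otimes M_g\otimes \lambda(h,g)$ respectively, and applying the relative braiding $c_{Y_h,M_g}$ of $\cC$ rewrites the latter as $M_g\otimes Y_h^g\otimes \lambda(h,g)$. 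Tensoring with $\lambda(g,g^{-1}hg)^*$ and invoking rigidity forces $T_g^{(\lambda,\nu)}(Y_h) = Y_h^g\otimes \lambda(h,g)\otimes \lambda(g,g^{-1}hg)^*$, and the natural isomorphism implementing the module equivalence is precisely the composite of $c_{Y_h,M_g}$ with a coevaluation of $\lambda(g,g^{-1}hg)$, namely the diagram~\eqref{eq:zestedgcrossedbraiding}.

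Next I would compute the tensorator $\mu^{X,Y}_{h^\lambda}$ and compositor $\gamma^{X}_{g^\lambda,h^\lambda}$ by demanding that $T^{(\lambda,\nu)}$ extend to a monoidal functor $\underline{G^{op}}\to \underline{\Aut_{\otimes}(\cC^{(\lambda,\nu)})}$ satisfying axioms (1)--(3) of Definition~\ref{def:gcrossedbraided}. Unpacking $T_h^{(\lambda,\nu)}(X_k\otimes_{(\lambda,\nu)} Y_g)$ and $T_h^{(\lambda,\nu)}(X_k)\otimes_{(\lambda,\nu)} T_h^{(\lambda,\nu)}(Y_g)$ via the formula for $T_h^{(\lambda,\nu)}$ together with the definition of $\otimes_{(\lambda,\nu)}$, one sees the difference is a string of $\lambda$-factors rearranged by three applications of associative zesting --- $\nu_{k,g,h}$, $\nu^{-1}_{k,h,g^h}$, and $\nu_{h,k^h,g^h}$ --- with evaluation/coevaluation morphisms for $\lambda(h,-)$ and $\lambda(h,k^{gh})$ inserted to cancel the mismatched factors. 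This is exactly the diagram~\eqref{eq:zestedtensorator}. The compositor~\eqref{eq:zestedcompositor} arises analogously by comparing $T_{gh}^{(\lambda,\nu)}(X_k)$ with $T_h^{(\lambda,\nu)}(T_g^{(\lambda,\nu)}(X_k))$, now using two copies of $\nu$ (namely $\nu_{k,g,h}$ and $\nu^{-1}_{g,k^g,h}$), the original compositor $(\gamma_{g,h})_{X_k}$ of $\cC$, and appropriate duality morphisms.

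Finally, axioms (1)--(6) of Definition~\ref{def:gcrossedbraided} for the tuple $(T^{(\lambda,\nu)},\mu^\lambda,\gamma^\lambda,c^{(\lambda,\nu)})$ hold automatically, since this tuple arises from the universal construction of \cite[Proposition~8.11]{DN} applied to the central $G$-extension of Lemma~\ref{lem:zestedcentralfunctor}. One may alternatively verify them directly: the tensor coherence and $G$-action compatibility conditions (1)--(3) reduce to the associative $G$-crossed zesting equation~\eqref{eq:associativeGzesting} after sliding $\nu$-boxes and duality caps past one another, while the $G$-crossed hexagons (5)--(6) follow from the corresponding hexagons of $\cC$ together with naturality of its $G$-braiding with respect to $\lambda$-strands. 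I expect the main obstacle to lie precisely in this diagrammatic bookkeeping --- each axiom expands into a large diagram with multiple $\nu$-boxes, braid crossings, and duality caps on up to seven strands --- which is why routing through the central-extension formalism is the cleaner approach.
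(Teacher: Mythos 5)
Your treatment of the action functors $T_g^{(\lambda,\nu)}$ and of the zested $G$-braiding matches the paper's proof exactly: both extract them from the pasting diagram \eqref{eq:gactionfunctor}, exhibiting the natural isomorphism $L^{(\lambda,\nu)}_{g,h}\Rightarrow R^{(\lambda,\nu)}_{g,g^{-1}hg}\circ T^{(\lambda,\nu)}_g$ as the original $G$-braiding $c_{Y_h,M_g}$ followed by insertion of a coevaluation of $\lambda(g,g^{-1}hg)$, which forces the stated formula for $T_g^{(\lambda,\nu)}$ on objects and yields the diagram \eqref{eq:zestedgcrossedbraiding}.

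Where you diverge, and where there is a genuine gap, is in the derivation of $\mu_{h^{\lambda}}$ and $\gamma_{g^{\lambda},h^{\lambda}}$. You propose to determine them by ``demanding that $T^{(\lambda,\nu)}$ extend to a monoidal functor satisfying axioms (1)--(3)'' of Definition \ref{def:gcrossedbraided}. Those axioms are coherence conditions, not defining equations: they do not single out $\mu$ and $\gamma$ (for example, rescaling $\mu_h$ by a suitable $2$-cochain valued in $\Bbbk^\times$ preserves axioms (1)--(3)), so ``unpacking the difference'' between $T_h^{(\lambda,\nu)}(X\overset{\lambda}{\otimes}Y)$ and $T_h^{(\lambda,\nu)}(X)\overset{\lambda}{\otimes}T_h^{(\lambda,\nu)}(Y)$ produces \emph{an} isomorphism between the correct objects but does not justify that it is \emph{the} one induced by the central structure, which is precisely what the theorem asserts. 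The paper's key observation is different: once the zested associator and the zested $G$-braiding are fixed, the $G$-crossed hexagon axioms (5) and (6) of Definition \ref{def:gcrossedbraided} can be solved for $\id_Z\overset{\lambda}{\otimes}(\mu_{h^{\lambda}})_{X,Y}$ and $\id_{Y\overset{\lambda}{\otimes}Z}\overset{\lambda}{\otimes}(\gamma_{g^{\lambda},h^{\lambda}})_X$ (Equations \ref{eq:zestedtensorators} and \ref{eq:zestedcompositors}), and one then traces out the invertible $\lambda$-strands using Remark \ref{rmk:technical} to isolate $\mu$ and $\gamma$ and arrive at the diagrams \eqref{eq:zestedtensorator} and \eqref{eq:zestedcompositor}. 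This pins the formulas down canonically and also removes the need for the large diagrammatic verification you anticipate in your final paragraph, since the hexagons hold automatically for the structure induced by the Davydov--Nikshych construction. To repair your argument, replace the appeal to axioms (1)--(3) by this hexagon-solving step, or else derive $\mu$ and $\gamma$ directly from the composition of $\cB$-module functor structures in that construction and then check agreement with the stated diagrams.
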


\begin{proof}
We will show that the functor $T^{{(\lambda,\nu)}}_g$ makes the diagram  \ref{eq:gactionfunctor} commute up to the proposed natural isomorphism. Consider the following composition

\begin{equation}\label{cd:gcrossedaction}   \begin{tikzcd}
    L_{g,h}^{{(\lambda,\nu)}}(Y_h)(M_g)=  Y_h \otimes M_g \otimes \lambda(h,g) \phantom{L_{g,h}^{\lambda}(Y_h)(M_g)}
     \ar[d,"c_{Y_h,M_g} \otimes \id_{\lambda(h,g)}"]  \\ 
      M_g \otimes Y_h^g \otimes \lambda(h,g)  \ar[d,"\id_{M_g \otimes Y_h^g \otimes \lambda(h,g)} \otimes\, \text{coev}'_{\lambda(g,g^{-1}hg)} "]   \\
        M_g \otimes Y_h^g \otimes \lambda(h,g) \otimes \lambda(g,g^{-1}hg)^* \otimes \lambda(g,g^{-1}hg)   \ar[d,equal] \\
     \phantom{R_{hg,g^{-1}hg}^{(\lambda,\nu)}(Y_h^{g^{\lambda}})(M_g)} R_{g,g^{-1}hg}(T^{(\lambda,\nu)}(Y_h))(M_g)=M_g \otimes Y_h^{g^{\lambda}} \otimes \lambda(g,g^{-1}hg)\\
   \end{tikzcd}
    \end{equation} 
which gives a natural isomorphism from $L_{g,h}^{(\lambda,\nu)}$ to $R_{hg,g^{-1}hg}^{(\lambda,\nu)}\circ T^{(\lambda,\nu)}_g$. We can conclude that the zested $G$-action of $T_g^{{(\lambda,\nu)}}$ on objects is given by \begin{align}\label{eq:zestedgaction}
Y_h^{g^{\lambda}}:= Y_h^g \otimes \lambda(h,g) \otimes \lambda(g,g^{-1}hg)^*,
\end{align}
where we write $g^{\lambda}$ in the superscript to indicate that $g$ is the zested action by $T_g^{(\lambda,\nu)}$. 

    Finally we compute the natural isomorphisms
    $(\mu_{g^{\lambda}})_{X,Y}: ( X_h \overset{\lambda}{\otimes} Y_k)^{g^{\lambda}} \simeq  X_h^{g^{\lambda}}  \overset{\lambda}{\otimes}  Y_{k}^{g^{\lambda}}$ for all $X_h \in \cC_h, Y_k \in \cC_k$ and $(\gamma_{g^{\lambda},h^{\lambda}})_{X}: X_k^{(gh)^{\lambda}}  \simeq (X_k^{g^{\lambda}})^{h^{\lambda}}$ for all $X_k \in \cC_k$. The key observation is that the $G$-crossed braiding and the functors $T^{(\lambda,\nu)}$ already implicitly determine the isomorphisms $\mu_{g^{\lambda}}$ and $\gamma_{g^{\lambda},h^{\lambda}}$ through the $G$-crossed hexagon axioms (5) and (6) in Definition \ref{def:gcrossedbraided} via the formulas

\begin{align}
    \label{eq:zestedtensorators}
  \id_Z \overset{\lambda}{\otimes} (\mu_{h^{\lambda}})_{X,Y}= &  
 \alpha^{(\lambda,\nu)}_{Z,X^{h^{\lambda}},Y^{h^{\lambda}}} \circ c^{(\lambda,\nu)}_{X,Z} \overset{\lambda}{\otimes} \id_{Y^{h^{\lambda}}} \circ (\alpha^{(\lambda,\nu)}_{X,Z,Y^{h^{\lambda}}})^{-1}  \circ \id_X \overset{\lambda}{\otimes} c^{(\lambda,\nu)}_{Y,Z} \circ \alpha^{(\lambda,\nu)}_{X,Y,Z} \circ (c^{(\lambda,\nu)}_{X\overset{\lambda}{\otimes} Y,Z})^{-1}
\end{align}

\begin{align}
 \label{eq:zestedcompositors}
\id_{Y \overset{\lambda}{\otimes} Z} \overset{\lambda}{\otimes} (\gamma_{g^{\lambda},h^{\lambda}})_X =&  
 (\alpha^{(\lambda,\nu)}_{Y,Z,(X^{g^{\lambda}})^{h^{\lambda}}})^{-1}
\circ \id_Y \overset{\lambda}{\otimes}  c^{(\lambda,\nu)}_{X^{g^{\lambda}},Z}
\circ \alpha^{(\lambda,\nu)}_{Y,X^{g^{\lambda}},Z} 
\circ c^{(\lambda,\nu)}_{X,Y} \overset{\lambda}{\otimes} \id_Z 
\circ (\alpha^{(\lambda,\nu)}_{X,Y,Z})^{-1} \circ (c^{(\lambda,\nu)}_{X,Y \overset{\lambda}{\otimes} Z } )^{-1}.
\end{align}

By Remark \ref{rmk:technical} we can trace out the invertibles on the right-hand side of these equations introduced by the zested tensor product $\overset{\lambda}{\ot}$. Then one is left with equations for $\mu_{g^{\lambda}}$ and $\gamma_{g^{\lambda},h^{\lambda}}$ up to the original tensor product with the identity isomorphisms on $Z$ and $Y \overset{\lambda}{\otimes}$, respectively. Explicit diagrammatic formulas for $\mu_{g^{\lambda}}$ and $\gamma_{g^{\lambda},h^{\lambda}}$ are found by substituting the diagrams for the zested associators $\alpha^{(\lambda,\nu)}$ and $G$-crossed braiding $c^{(\lambda,\nu)}$ from Equations \ref{eq:zestedassociator} and \ref{eq:zestedbraiding} into Equations   \ref{eq:zestedcompositors} and \ref{eq:zestedtensorators} and simplifying as described.

Of course, when $\cC$ is pivotal this is immediate after tracing out Equations \ref{eq:zestedcompositor} and \ref{eq:zestedtensorator} (Section \ref{sec:zestedpivotality} discusses the pivotal structure this induces on $\cC^{(\lambda,\nu)}$). But the discussion above shows why pivotality of $\cC$ is not necessary in general.
\end{proof}
    
\begin{remark}
     In light of Theorem \ref{thm:gcrossedzesting}  an associative $G$-crossed zesting $\cC^{(\lambda,\nu)}$  automatically has the structure of a $G$-crossed braided fusion category. Thus, in analogy with braided zesting \cite[Definition 4.1]{DGPRZ}, we may make an {\it a posteriori}  definition of a {\it $G$-crossed braided zesting} as such a $\cC^{(\lambda,\nu)}$ obtained this way. Similarly, this justifies referring to an associative $G$-crossed zesting datum $(\lambda,\nu)$ as a {\it $G$-crossed braided zesting datum}.
 \end{remark}   
  
\begin{corollary}
Let $\mathcal{B}$ be a $G$-graded braided fusion category and $(\lambda, \nu)$ an associative (not $G$-crossed associative) zesting. Then $\mathcal{B}^{(\lambda,\nu)}$ is automatically $G$-crossed braided.
\end{corollary}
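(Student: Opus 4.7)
The plan is to view $\mathcal{B}$ as a $G$-crossed braided fusion category with trivial $G$-action via Example \ref{ex: braided as G-crossed}, reinterpret the associative zesting datum $(\lambda,\nu)$ as an associative $G$-crossed zesting datum, and then invoke Theorem \ref{thm:gcrossedzesting}.

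First, since $\mathcal{B}$ is braided and faithfully $G$-graded, the group $G$ is forced to be abelian (a braiding between $X_g$ and $Y_h$ demands $\cC_{gh}=\cC_{hg}$), matching the setting of Definition \ref{def:braidedzesting}. Equipping $\mathcal{B}$ with $T_g = \Id_{\mathcal{B}}$ for every $g \in G$ and with the $G$-braiding induced by the ordinary braiding of $\mathcal{B}$ produces a $G$-crossed braided structure whose underlying grading and trivial component agree with those of $\mathcal{B}$.

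Next, I would verify that $(\lambda,\nu)$ satisfies the axioms of Definition \ref{def:gcrossedassociativezesting} with respect to this trivial action. Because the $G$-action on $\Inv(\mathcal{B}_e)$ is trivial we have $\lambda(g_1,g_2)^{g_3}=\lambda(g_1,g_2)$, so the source and target of each $\nu_{g_1,g_2,g_3}$ coincide in the two definitions; the $2$-cocycle condition on $\lambda$, its normalization, and the condition $\nu_{g_1,e,g_2}=1$ all transfer verbatim. The only point of bookkeeping is the orientation of the crossing in the coherence relation: Equation \ref{eq:associativezestingcondition} uses $c^{-1}_{\lambda(g_1,g_2),\lambda(g_3,g_4)}$, whereas Equation \ref{eq:associativeGzesting} uses the positive crossing $c_{\lambda(g_1,g_2),\lambda(g_3,g_4)}$. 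By Remark \ref{rem:reverse zesting} this is precisely the distinction between a zesting datum and its reverse, so if $(\lambda,\nu)$ satisfies one convention then $(\lambda,\nu^{-1})$ satisfies the other; either way one obtains a valid associative $G$-crossed zesting datum.

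Theorem \ref{thm:gcrossedzesting} then endows $\mathcal{B}^{(\lambda,\nu)}$ with a canonical $G$-crossed braided structure, with $G$-action, $G$-braiding, tensorators, and compositors read off from Equations \ref{eq:zestedgaction}--\ref{eq:zestedcompositor}. No real obstacle arises; the content is simply the observation that trivial $G$-action collapses Definition \ref{def:gcrossedassociativezesting} to Definition \ref{def:braidedzesting} (up to the reverse-zesting convention). It is worth emphasizing that the \emph{resulting} $G$-action is typically nontrivial---on objects it acts by $Y_h \mapsto Y_h \otimes \lambda(h,g) \otimes \lambda(g,h)^{*}$---so the corollary genuinely produces $G$-crossed braided categories out of $G$-graded braided ones, and not merely $G$-graded braided ones with trivial action.
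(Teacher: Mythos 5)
Your proposal is correct and follows essentially the same route as the paper: view $\mathcal{B}$ as a $G$-crossed braided category with trivial action via Example \ref{ex: braided as G-crossed}, reconcile the crossing convention through Remark \ref{rem:reverse zesting}, and apply Theorem \ref{thm:gcrossedzesting}. You spell out the convention bookkeeping in more detail than the paper does, which is a welcome clarification rather than a deviation.
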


\begin{proof} If $(\lambda,\nu)$ is a reverse (see Remark \ref{rem:reverse zesting}) associative zesting data for $\cB$ then taking the $G$-action to be trivial as in Example \ref{ex: braided as G-crossed}, we obtain a $G$-crossed zesting data $(\lambda,\nu)$ for $\cB$.  The resulting category $\cB^{(\lambda,\nu)}$ is $G$-crossed braided by Theorem \ref{thm:gcrossedzesting}.  
\end{proof}
    
\begin{example}
Consider $\Vect_G$ with $G$-crossed braided structure given as in Example \ref{ex:vecgomega} (with trivial $3$-cocycle). Now, if we let $\lambda$ be trivial in the $G$-crossed zesting construction then any $3$-cocycle $\omega\in H^3(G,\Bbbk^{\times})$ gives a $G$-crossed zesting datum: $(1,\omega)$.  The resulting $G$-crossed braided fusion category is $\Vect_G^\omega$ as described in Example \ref{ex:vecgomega}.
\end{example}    
   
\subsubsection{Connection between \texorpdfstring{$G$}{G}-crossed zesting and \texorpdfstring{$G$}{G}-extension theory}

In \cite{DN} $G$-crossed braided extensions of a braided tensor category $\cB$ were classified by means of the Picard 2-categorical
group $\textbf{Pic}(\cB)$ of invertible $\cB$-module categories. More concretely, Davydov and Nikshych have shown that $G$-crossed braided extensions correspond to monoidal 2-functors $\underline{\underline{G}} \to \textbf{Pic}(\cB)$. In consequence, equivalence classes of extensions can be described in terms of certain cohomology groups associated with a group morphism $G \to \Pic{\cB}$, where $\Pic{\cB}$ is the group of equivalence classes of invertible $\cB$-module categories. For more details see {\it loc. cit.}

\begin{theorem}\label{thm:extensions}
Let $\mathcal{B}$ be a braided fusion category. Two $G$-crossed braided fusion categories have the same group morphism $G\to \Pic{\cB}$ if and only if they are related by $G$-crossed braided zesting.
\end{theorem}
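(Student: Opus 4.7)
The proof will proceed in two parts, corresponding to the two directions of the biconditional.

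For the forward direction, suppose $\cC^{(\lambda,\nu)}$ is a $G$-crossed braided zesting of $\cC$. By Proposition \ref{prop:associativegzesting}, the underlying abelian category and the grading of $\cC^{(\lambda,\nu)}$ coincide with those of $\cC$. Moreover, the normalization $\lambda(e,g)=\lambda(g,e)=\mathds{1}$ implies that each $\cC^{(\lambda,\nu)}_g$ has precisely the same $\cB$-module structure as $\cC_g$, where $\cB = \cC_e = \cC^{(\lambda,\nu)}_e$. Hence $[\cC^{(\lambda,\nu)}_g] = [\cC_g]$ in $\Pic{\cB}$, and the induced morphism $G \to \Pic{\cB}$ is unchanged under zesting.

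For the reverse direction, let $\cC$ and $\cD$ be two $G$-crossed braided extensions of a common braided fusion category $\cB$ inducing the same morphism $\rho: G \to \Pic{\cB}$. For each $g \in G$ I would choose a $\cB$-module equivalence $\phi_g: \cC_g \to \cD_g$ with $\phi_e = \id_\cB$, which exists because $[\cC_g] = [\cD_g] = \rho(g)$. Transporting the tensor product of $\cD$ back via these equivalences and comparing with that of $\cC$ yields, for each pair $(g,h)$, a $\cB$-module autoequivalence of $\cC_{gh}$. Since $\cC_{gh}$ is an invertible $\cB$-module category, such autoequivalences are canonically parametrized by $\Inv(\cB)$, producing a function $\lambda: G \times G \to \Inv(\cC_e)$. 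Pentagon-compatibility of the tensor products on both sides forces $\lambda$ to be a 2-cocycle with respect to the $G$-action on $\Inv(\cB)$ encoded by $\rho$, and the residual discrepancy between the associators of $\cC$ and $\cD$ after this identification yields a 3-cochain $\nu$ that verifies the $G$-crossed associative zesting condition \eqref{gcrossedassoczesting}.

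To conclude that $\cC^{(\lambda,\nu)} \simeq \cD$ as $G$-crossed braided fusion categories, I would invoke Theorem \ref{thm:gcrossedzesting}, which shows that the $G$-action and $G$-crossed braiding of $\cC^{(\lambda,\nu)}$ are uniquely determined by its central extension structure together with $(\lambda,\nu)$. By the discussion in Section \ref{subsection: from central G-ext to G-crossed ext} (following \cite[Proposition 8.11]{DN}), any compatible $G$-crossed braided structure on a $G$-graded fusion category is determined up to equivalence by its central extension structure. Since the central extension structures of $\cC^{(\lambda,\nu)}$ and $\cD$ agree by construction of $(\lambda,\nu)$, the desired equivalence follows.

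The main obstacle I expect is verifying the $G$-crossed associative zesting condition for the constructed $\nu$. This will require carefully unwinding the pentagon axiom for $\cD$ in coordinates inherited from $\cC$ via the $\phi_g$ while tracking how the $G$-action on $\cB$ interacts with the coboundary terms arising from $\lambda$. A subtlety implicit in the argument is ensuring that the $G$-actions on $\cB$ coming from $\cC$ and $\cD$ can be canonically identified; this is ultimately a consequence of the Davydov-Nikshych framework, in which the $G$-action on $\cB$ is encoded by the morphism $\rho$ alone.
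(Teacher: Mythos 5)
Your forward direction is exactly the paper's: the normalization of $(\lambda,\nu)$ forces $\cC^{(\lambda,\nu)}_g=\cC_g$ as $\cB$-module categories, so the induced morphism $G\to\Pic{\cB}$ is unchanged. For the converse your skeleton is also the intended one---both arguments rest on the Davydov--Nikshych correspondence between $G$-crossed braided extensions and monoidal $2$-functors $\underline{\underline{G}}\to\textbf{Pic}(\cB)$---but where you attempt a direct construction of $(\lambda,\nu)$ by comparing tensor products and associators, the paper simply quotes the two torsor statements from \cite{DN,ENO}: liftings of $\rho$ to $\Pi_{\leq 1}(\textbf{Pic}(\cB))$ form a torsor over $H^2(G,\Inv(\cB))$ (producing $\lambda$ with $X_g\otimes' Y_h=X_g\otimes Y_h\otimes\lambda(g,h)$), the existence of the second extension forces $\text{PW}(\lambda)=0$ so some zesting datum $(\lambda,\nu)$ exists by Remark \ref{remark H4 obstruction}, and liftings to the full $2$-functor form a torsor over $H^3(G,\ku^{\times})$, whence $\cD\cong(\cC^{(\lambda,\nu)})^{\omega}=\cC^{(\lambda,\nu\omega)}$ for some $3$-cocycle $\omega$. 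Note in particular this last absorption identity: your write-up assumes the associator discrepancy you extract satisfies the zesting condition (Equation \ref{gcrossedassoczesting}) on the nose, which is defensible if $\nu$ is defined as the \emph{entire} discrepancy, but the two-step ``choose any lifting, then correct by $\omega$'' argument is what actually gets cited.

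The genuine gap is your final step. You assert that ``the central extension structures of $\cC^{(\lambda,\nu)}$ and $\cD$ agree by construction of $(\lambda,\nu)$,'' but your construction used only the monoidal data of $\cC$ and $\cD$ (the module equivalences $\phi_g$, the tensor products, the associators); at no point did you compare the relative braidings $c_{Z,X}$ for $X\in\cB$, and those \emph{are} the central extension structures. A monoidal equivalence $\cC^{(\lambda,\nu)}\simeq\cD$ restricting to the identity on $\cB$ need not intertwine the two central functors $\cB\to\cZ(-)$, and without that you cannot invoke Section \ref{subsection: from central G-ext to G-crossed ext} to upgrade to an equivalence of $G$-crossed braided categories in the sense of Definition \ref{def:G-equiv}. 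The repair is to run the whole comparison at the level of $2$-functors into $\textbf{Pic}(\cB)$ from the start: there the relative braiding is encoded in the $\cB$-module-functor structure of the multiplication functors $R_{g,h}$, so matching the $2$-functors (which is exactly what the torsor statements deliver) matches the central structures automatically. This is precisely why the paper phrases the converse as a cohomological torsor argument rather than a hands-on construction.
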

\begin{proof}

Proposition 8.11 and Theorem 8.13 in \cite{DN} show that there is an equivalence between equivalence classes of $G$-crossed braided extensions of $\mathcal{B}$ and categorical $2$-group homomorphisms from $\underline{\underline{G}}$ to $\textbf{Pic}(\cB)$. 

Let $\cC$ be a $G$-crossed braided extension of $\cB$ and $\cC^{(\lambda,\nu)}$ a $G$-crossed braided zesting of $\cC$. Since the trivial component remains the same under zesting, the normalization conditions imposed on $\lambda$ and $\nu$ force  $\cC^{(\lambda, \nu)}_g=\cC_g$ as $\cB$-module categories for all $g\in G$. In fact, it follows from the normalization condition in Definition \ref{def:gcrossedassociativezesting} that the zested module associators are equal to the original ones. Hence, the group morphism $G\to \Pic{\cB}$ associated to $\cC^{(\lambda,\nu)}$  is the same one. 

On the other hand, consider $(\cC,\otimes)$ and $(\cC',\otimes')$ $G$-crossed braided extensions of $\cB$ such that $\cC_g\cong \cC_g'$ for all $g\in G$ as $\cB$-module categories. In particular, $(\cC,\otimes)$ and $(\cC',\otimes')$ are defining the same group homomorphism $G\to \Pic{\cB}$. 

It follows from \cite{DN} and \cite{ENO} that all possible liftings of a fixed group homomorphism $G\to \Pic{\cB}$ to $\underline{G} \to \Pi_{\leq 1}(\textbf{Pic}(\cB))$ form a torsor over $H^2(G,\Inv(\cB))$, where $\Pi_{\leq 1}(\textbf{Pic}(\cB))$ is the categorical group obtained by truncating the categorical 2-group $\textbf{Pic}(\cB)$. Then there exists a 2-cocycle $\lambda:G\times G\to \Inv(\cB)$ such that $X_g\otimes'Y_h=X_g\otimes Y_h\otimes \lambda(g,h)$. Since $\cC'$ is a $\cB$-extension, the $PW(\lambda)\in H^4(G,\ku^{\times})$ vanishes \cite[Definition 8.11]{ENO}. By Remark \ref{remark H4 obstruction}, there exists a zesting datum $(\lambda,\nu)$ in such a way that $\cC^{(\lambda,\nu)}$ and $\cC'$ determine the same categorical group homomorphism $\underline{G}\to \Pi_{\leq 1}(\textbf{Pic}(\cB))$. Again, all possible liftings of such a fixed categorical group homomorphism to $G \to \Pic{\cB}$ form a torsor over $H^3(G,\ku^{\times})$. Thus, there exists a 3-cocycle $\omega:G\times G\times G\to \ku^{\times}$ 
such that $\cC\cong (\cC^{(\lambda,\nu)})^\omega$ as $G$-crossed braided extensions, where $(\cC^{(\lambda,\nu)})^\omega$ corresponds to 
twisting the associativity constraint by the 3-cocycle $\omega$. Finally, note $(\lambda,\nu\omega)$ is a zesting datum and $(\cC^{(\lambda,\nu)})^\omega=\cC^{(\lambda,\nu\omega)}$. Then $\cC'$ is equivalent to a zesting of $\cC$ with zesting datum $(\lambda,\nu\omega)$.
\end{proof}

\subsection{\texorpdfstring{$\mathds{Z}/N\mathds{Z}$}{Z/NZ}-crossed zesting}\label{cyclic zesting}

Before describing cyclic-crossed zesting we need to recall a well-known fact about the cohomology of cyclic groups.

Let $C_m$ be the cyclic group of order $m$ generated by $\sigma$ and $A$ a $C_m$-module. The sequence
$$\begin{diagram}\node{\cdots}\arrow{e,t}{N} \node{\mathbb Z
C_m}\arrow{e,t}{\sigma -1}\node{\mathbb Z
C_m}\arrow{e,t}{N}\node{\mathbb Z C_m}\arrow{e,t}{\sigma
-1}\node{\mathbb Z C_m}\arrow{e}\node{\mathbb Z}\end{diagram}$$
where  $N= 1+ \sigma + \sigma^2 +\cdots +\sigma ^{m-1}$ is a free resolution of $\mathbb Z$ as $C_m$-module. Using that $H^n(C_m,A)= \operatorname{Ext}_{\mathbb{Z}[C_m]}(\Z, A)$, we have that

\begin{align}\label{eq:coho_cyclic} H^n(C_m, A)= \begin{cases}\ker(N)/\operatorname{Im}(1-\sigma), \qquad &\text{if } n=1, 3, 5, \ldots \\
\Ker(1-\sigma)/\operatorname{Im}(N), \quad  &\text{ if } n = 2, 4, 6, \ldots.
\end{cases}
\end{align}

\label{sec:cyclicGcrossedzesting}

Let $\cC$ be an $C_m$-crossed braided fusion category with  $\mathcal{T}$ the maximal pointed fusion subcategory of the trivial component $\cC_e$.

Hence the abelian group  $\Inv(\mathcal{T})$ is a $C_m$-module and $$H^2(C_m,\Inv(\mathcal{T}))= \mathcal{T}^{C_m}/\operatorname{Im}(N).$$

An explicit isomorphism between $H^2(A,\Inv(\mathcal{T}))$ and $ \mathcal{T}^{C_m}/\operatorname{Im}(N)$ is defined via the 2-cocycles:

\begin{align}\label{cyclic2cocycle}
\lambda_g(\sigma^i,\sigma^j) = \begin{cases} \mathds{1}, & i+j < N \\
g, & i+j \ge N \end{cases} 
\end{align}
where $0\leq i, j < m$ and $g\in \mathcal{T}^{\Z/N\Z}$. 

Fix such a $\lambda_g$. It follows from \ref{eq:coho_cyclic} that  $H^4(C_m,\Bbbk^\times)=0$ so the PW function (\ref{defintion of PW}) automatically vanishes for any $\lambda_g$.  Thus there is no obstruction to lifting $\lambda_g$ and there exists a $C_m \cong H^3(C_m,\Bbbk^\times)$-torsor's worth of cochains $\nu_b\in C^3(C_m,\Bbbk^\times)$ so that $(\lambda_a,\nu_b)$ for $(a,b)\in \mathcal{T}^{C_m}/\operatorname{Im}(N)\times C_m$ form an $C_m$-crossed associative zesting datum for $\cC$. Given a particular $\nu$ satisfying (\ref{eq:associativeGzesting}) we may parameterize all such cochains as $\nu_b=\nu\cdot \xi_b$
where $q=e^{2\pi i/N}$ and
\begin{align}\label{cyclic3cochain}
\xi_b(\sigma^i,\sigma^j,\sigma ^k) = \begin{cases} 1 & i+j < N \\
q^{bk} & i+j \ge N \end{cases},
\end{align}where $0\leq i,j,k<m$.

The $C_m$-crossed braided structure of $\cC^{(\lambda_g,\nu_b)}$ is described as follows.  The action of $\sigma^j\in C_m$ on $X_{\sigma^i} \in \cC^{(\lambda_g,\nu_b)}_{\sigma^i}$ is given by
\[
X_{\sigma^i}^{\sigma^{j^{\lambda}}} := X_{\sigma^i}^{\sigma^j} \otimes \lambda_g(i,j) \otimes \lambda_g(j,i)^* \cong X_{\sigma i}^{\sigma^j}.
\]
Hence up to a canonical isomorphism, the zested action on the objects is equal to the original action. However, the higher data of the zested action is nontrivial in general.  Since we are assuming that the $C_m$-action is strict, 
for a fixed $\nu_b$ it is possible to identify both $(\mu_b)^{X_i,X_j}_{k^{\lambda}}$ and $(\gamma_b)^{X_k}_{i^{\lambda},j^{\lambda}}$ with scalars $(\tilde{\mu}_b)^{X_i,Y_i}_k, (\tilde{\gamma}_b)_{i,j}^{X_k} \in \mathds{k}^\times$ by projecting onto the one-dimensional subspaces spanned by a chosen vector, namely 

$$\id_{X_i^k} \ot \text{coeval}_{\lambda(i,k)} \ot \id_{X_j^k} \ot \text{coeval}_{\lambda(j,k)} \ot \id_{\lambda(i,j)} \ot \text{eval}'_{\lambda(ij,k)} \in \Hom((X_i \overset{\lambda}{\otimes} X_j)^{k^{\lambda}},X_i^{k^{\lambda}} \overset{\lambda}{\otimes} X_j^{k^{\lambda}})$$
and$$\id_{X_k^{ij}} \ot \text{coeval}_{\lambda(i,k)} \ot \text{coeval}_{\lambda(j,k)} \ot \text{eval}'_{\lambda(ij,k)} \in \Hom(X_k^{(ij)^{\lambda}}, (X_k^{i^{\lambda}})^{j^{\lambda}}),$$ respectively. 

The scalars can be computed by applying the graphical calculus to the following equations.
\begin{align} 
\begin{tikzpicture}[line width=1,scale=.4,yscale=-1,baseline =-80]
\draw (-4,0) node[above] {$\parcen{X_i^k}$} -- (-4,14)node[below] {$\parcen{X_i^k}$};
\draw (2,0) node[above] {$\parcen{X_j^k}$} -- (2,14)node[below] {$\parcen{X_j^k}$};
\draw (9,14) node[below] {\small $(i,j)$};
 \draw (10.5,0)--(10.5,10.5);
\draw[looseness=1.5] (8,0)--(8,1.5) \br (4,5.5)--(4,14);
\draw (3.75,14) node[below] {\small $\phantom{^*}(j,k)\phantom{^*}$};
\draw (7.5,0) node[above] {\small $(i,j)$};
\draw (10.5,0) node[above] {\small $(ij,k)$};
\draw[looseness=1.5] (8,5.5) to [out=-90, in=-90](5.5,5.5)--(5.5,14);
\draw (6,14)node[below] {\small $(j,k)^*$};
 \draw[looseness=2] (13,0) -- (13,13) to [out=90, in=90] (10.5,13)--(10.5,10.5);
 \draw (13.75,0)node[above] {\small $(ij,k)^*$};
\draw[white, line width=10, looseness=.75] (8,6.5) \br (-2,10)--(-2,14);
\draw[looseness=.75] (8,6.5) \br (-2,10)--(-2,14);
\draw (-2.25,14) node[below] {\small $\phantom{^*}(i,k)\phantom{^*}$};
\draw[white, line width=10,looseness=1] (8,11.5) to [out=-90,in=-90] (0,11.5)--(0,14);
\draw[looseness=1.] (8,14)--(8,11.5) to [out=-90,in=-90] (0,11.5)--(0,14) node[below] {\small $(i,k)^*$};
\draw[fill=white] (6.5,5.5) rectangle node {$\scriptstyle i,k,j$} (11.5,6.5);
\draw (11.5,5.5) node[right] {\tiny$-1$};
\draw[fill=white] (6.5,.5) rectangle node {$\scriptstyle i,j,k$} (11.5,1.5);
\draw[fill=white] (6.5,11.5) rectangle node {$\scriptstyle k, i, j$} (11.5,12.5);
\end{tikzpicture}
= (\tilde{\mu}_b)^{X_i,Y_i}_k
\begin{tikzpicture}[line width=1,scale=.4,yscale=-1,baseline =-80]
\draw (-4,0) node[above] {$\parcen{X_i^k}$} -- (-4,14)node[below] {$\parcen{X_i^k}$};
\draw (2,0) node[above] {$\parcen{X_j^k}$} -- (2,14)node[below] {$\parcen{X_j^k}$};
\draw[looseness=2] (6,14) to [out=-90, in=-90] (4,14);
\draw (3.75,14) node[below] {\small $\phantom{^*}(j,k)\phantom{^*}$};
\draw (8.5,14) node[below] {\small $(i,j)$} -- (8.5,0) node[above] {\small $(i,j)$};
\draw (11,0) node[above] {\small $(ij,k)$};
\draw (6,14)node[below] {\small $(j,k)^*$};
\draw[looseness=2] (13,0) to [out=90, in=90] (11,0);
\draw (13.5,0) node[above] {\small $(ij,k)^*$};
\draw (-2.25,14) node[below] {\small $\phantom{^*}(i,k)\phantom{^*}$};
\draw[looseness=2] (-2,14) to [out=-90,in=-90] (0,14) node[below] {\small $(i,k)^*$};
\end{tikzpicture}
\end{align}

\begin{align}
\begin{tikzpicture}[line width=1,scale=.4,yscale=-1, baseline=-60]
\draw (-.25,0)node[above] {$X_k^{ij}$} --(-.25,12) node[below] {$X_k^{ij}$};
\draw [looseness=2](14,0)--(14,11) to [out=90, in=90] (16,11)--(16,0);
\draw (13.5,0) node[above] {$(ij,k)$};
\draw (17,0) node[above] { $(ij, k)^*$};
\draw[looseness=2] (10,10.5) --(10,11) to [out=90,in=90] (12,11)--(12,10.75);
\draw (12,9.25)-- (12,7.75);
\draw (12,6.25)-- (12,4.75);
\draw[looseness=1] (2,3.5) -- (2,3) to [out=-90,in=-90] (12,3)--(12,3.25);
\draw (2,4) --(2,12);
\draw (1.5,12) node[below] {\small $(i,k)$};
\draw (4,6.5)--(4,12)node[below] {\small $(i,k)^{*}$};
\draw[looseness=2] (4,6.5) to [out=-90,in=-90] (6,6.5)--(6,12);
\draw (6.5,12) node[below] {\small $(j,k)$};
\draw[fill=white] (5.5,6.5) rectangle node {$\scriptstyle i,k,j$} (14.5,7.5);
\draw (14.5,9.75) node[right] {\tiny$-1$};
\draw (14.5,3.5) node[right] {\tiny$-1$};
\draw[looseness=2] (8,12) --(8,9.5) to [out=-90,in=-90] (10,9.5);
\draw (9.24,12) node[below] {\small $(j,k)^*$}; 
\draw[fill=white] (9.5,9.5) rectangle node {$\scriptstyle i,j,k$} (14.5,10.5);
\draw[fill=white] (1.5,3.5) rectangle node {$ \scriptstyle k,i,j$} (14.5,4.5);
\end{tikzpicture}
=
(\tilde{\gamma}_b)_{i,j}^{X_k} 
\begin{tikzpicture}[line width=1,scale=.4,yscale=-1, baseline=-60]
\draw (-.25,0)node[above] {$X_k^{ij}$} --(-.25,12) node[below] {$X_k^{ij}$};
\draw (10.5,0) node[above] {$(ij,k)$};
\draw (13.5,0) node[above] { $(ij, k)^*$};
\draw[looseness=2] (2,12) to [out=-90,in=-90] (4,12);
\draw[looseness=2] (6,12) to [out=-90,in=-90] (8,12);
\draw[looseness=2] (11,0) to [out=90,in=90] (13,0);
\draw (1.5,12) node[below] {\small $(i,k)$};
\draw (4,12)node[below] {\small $(i,k)^{*}$};
\draw (6.5,12) node[below] {\small $(j,k)$};
\draw (9.25,12) node[below] {\small $(j,k)^*$}; 
\end{tikzpicture}
\end{align}
Solving for $\tilde{\mu}$ and $\tilde{\gamma}$ we find
    \begin{align}
        (\tilde{\mu}_{b})_k^{X_{i},Y_{j}}= \frac{\nu_b(i,j,k)\nu_b(k,i,j)}{\nu_b(i,k,j)}=\nu_b(i,j,k) \\
    (\tilde{\gamma}_b)^{X_{k}}_{i,j}=\frac{\nu_b(j,i,k)}{\nu_b(i,j,k)\nu_b(j,i,k)} = \nu_b(i,j,k)^{-1}
    \end{align}
again using that $\lambda_g$ is symmetric and that $\nu_b$ is symmetric in its first two arguments. 

\begin{remark} Note that when $X_k$ is simple $$\Hom_{\cC^{(\lambda,\nu)}}(X_k^{(ij)^{\lambda}}, (X_k^{i^{\lambda}})^{j^{\lambda}}) \cong \Hom_{\cC}(X_k^{ij}, X_k^{ij}) \cong \mathds{k}$$ and it is natural to want to identify $\gamma$ with a scalar (namely a multiple of the identity morphism $\id_{X_k}$). Of course the same is not true for $\mu$: $\Hom_{\cC^{(\lambda,\nu)}}((X_i \overset{\lambda}{\otimes} X_j)^{k^{\lambda}},X_i^{k^{\lambda}} \overset{\lambda}{\otimes} X_j^{k^{\lambda}})$ is not one-dimensional in general for simple $X_i$ and $X_j$. In summary, we have done something slightly unusual if reasonably natural to analyze the action here, but we reiterate that $(\mu_b)^{X_i,X_j}_{k^{\lambda}}$ is not an isomorphism of one-dimensional vector spaces.
\end{remark}

The $C_m$-crossed braiding is simply
\begin{align}
c^{(\lambda_g,\nu_b)}_{X_{i},X_{j}} = c_{X_{i},X_{j}} \ot \id_{\lambda_g(i,j)}\otimes \text{coeval}'_{\lambda_g(i,j)}
\end{align} 
where $c$ is the $A$-crossed braiding in $\cC$.

Here is a special case, as a continuation of Example \ref{ex:su33}.
 \begin{example} 
 \label{ex:Gcrossedzestingsu33}
The associative $\Z/ 3\Z$-crossed zestings $(\lambda_a,\nu_b)$ of $SU(3)_3$ are parameterized by pairs $(a,b)\in \Z/3\Z \times \Z/3\Z$. Recall from Example \ref{ex:su33} that only three out of the nine associative zestings $SU(3)_3^{(\lambda_a,\nu_b)}$ admit a braiding. Viewing $SU(3)_3$ as a $\Z/ 3\Z$-crossed braided category with the trivial $\Z/ 3\Z$-action, we find that all nine fusion categories $SU(3)_3^{(\lambda_a,\nu_b)}$ are $\Z/3\Z$-crossed braided.

\end{example}               
\subsection{Recovering the braided zesting construction}
\label{sec:recoveringbraidedzesting}
Recall from Remark \ref{rmk:trivialization} that a $G$-crossed braided category $\cC$ admits a braiding if the $G$-action admits a trivialization, i.e. a monoidal natural isomorphism of $T$ with the identity functor $\Id_{\cC}$. In particular, $G$ must be abelian. Throughout this section, $G$ will always denote an abelian group. In slightly more detail:

\begin{definition}
\label{def:trivialization}
Let $\cC$ be a (right) spherical $G$-crossed braided tensor category with $G$-action $(T,\mu, \gamma)$. A trivialization $\eta$ of the $G$-action is a family of natural isomorphisms $\eta_g: T_g \to \Id_{\cC}$ satisfying
\begin{enumerate}
    \item $\eta_h \circ \eta_g^h = \eta_{gh} \circ \gamma_{g,h}$
    \item $\eta_{g}= (\eta_g \otimes \eta_g)\circ \mu_g$
    for all $g,h \in G$.
\end{enumerate}
\end{definition}
We draw the equations that define $\eta$ for all $X, Y \in \cC$ as 

\begin{minipage}{.5\textwidth}
\begin{equation}
\label{eq:trivializationcondition1}
\begin{tikzpicture}[line width=1,scale=.75,baseline=1.25cm]
\onecirc{X}{}{\eta_{gh}}
\begin{scope}[yshift=1.5cm, xscale=1.5]
\onebox{}{(X^g)^h}{\gamma_{g,h}}
\draw (.25,1.25) node[right] {$^{-1}$};
\end{scope}
\end{tikzpicture}=
\begin{tikzpicture}[line width=1,scale=.75,baseline=1.25cm]
\onecirc{X}{}{\eta_{h}}
\begin{scope}[yshift=1.5cm]
\onecirc{}{(X^g)^h}{\eta_g}
\draw (.25,1.25) node[right] {$^h$};
\end{scope}
\end{tikzpicture} \qquad\text{ and }\end{equation}
\end{minipage}%
\begin{minipage}{.5\textwidth}
\begin{equation}
\label{eq:trivializationcondition2}
\begin{tikzpicture}[line width=1,scale=.75,baseline=1.25cm]
\draw (0,0) node[below] {$X\otimes Y$}--(0,.75);
\begin{scope}[yshift=.75cm]
\onecirc{}{}{\eta_{g}}
\end{scope}
\draw (0,2.75)--(0,3.5) node[above] {$(X\otimes Y)^g$};
\end{tikzpicture}
=
\quad
\begin{tikzpicture}[line width=1,scale=.75,baseline=1.25cm]
\onecirc{X}{}{\eta_{g}}
\draw (.75,2.75)--(.75,3.5) node[above] {$(X\otimes Y)^g$};
\begin{scope}[xshift=1.5cm]
\onecirc{Y}{}{\eta_{g}}
\end{scope}
\begin{scope}[yshift=1.5cm,xscale=1.5]
\twoboxnotop{}{}{\mu_g}{}
\end{scope}
\end{tikzpicture}.
\end{equation}
\end{minipage}

We use rectangles for the structure morphisms of the $G$-action and circles for the trivialization $\eta$ to facilitate comparison with \cite[Section 4]{DGPRZ}. Note that we have suppressed the dependence of $\gamma_{g,h}, \mu_g, \eta_g$ on $X, Y \in \cC$ for simplicity. 

Next recall from Theorem 3.12 in \cite[Theorem 3.12]{Galindo2022} that if $\cC$ is a $G$-crossed braided monoidal category with trivialization $\eta$, the natural isomorphisms $c^{(\eta)}_{X_g,Y_h}: X \otimes Y_h \to Y_h \otimes X_g$ given by

\begin{align}
\label{eq:braidingfromtrivialization}
\begin{tikzpicture}[line width=1,scale=.75,baseline =-.75*.25cm]
\draw (-1,-.25) node[left] {$c^{(\eta)}_{X_g,Y_h}=$};
\draw[looseness = 1.5] (0,0) node[below]{} \br (1,1.5) node[above]{$Y_h$};
\draw[white, line width=10, looseness = 1.5] (1,0) node[below]{} \br (0,1.5) node[above]{$X_g$};
\draw[looseness = 1.5] (1,0) node[below]{} \br (0,1.5) node[above]{$X_g$};
\draw (0,-2) node[below] {$Y_h$} -- (0,0);
\begin{scope}[xshift=1cm, yshift=-2cm]
\onecirc{X_g}{}{\eta_h}
\end{scope}
\end{tikzpicture}
\text{ where }  \begin{tikzpicture}[line width=1,scale=.75,baseline=.75*.75cm]
\draw[looseness = 1.5] (0,0) node[below]{$Y_h$} \br (1,1.5) node[above]{$Y_h$};
\draw[white, line width=10, looseness = 1.5] (1,0) node[below]{$X_g^h$} \br (0,1.5) node[above]{$X_g$};
\draw[looseness = 1.5] (1,0) node[below]{$X_g^h$} \br (0,1.5) node[above]{$X_g$};
\end{tikzpicture}  \text{ is the $G$-crossed braiding on $\cC$}
\end{align}

defines a braiding on $\cC$ and establishes a biequivalence between the 2-category of $G$-crossed braided fusion categories with trivialization and the 2-category of $G$-graded braided tensor categories. 

Now we will see that when the subcategory of invertibles of a braided fusion category generated by $\lambda(a,b)$ for all $a,b \in G$ is symmetric, a braided zesting datum $(\lambda,\nu, t)$ can be recovered from a $G$-crossed zesting datum $(\lambda,\nu)$ and a trivialization $\eta$. 

\begin{theorem}
\label{thm:trivialization}
Let $\cB$ be a braided fusion category faithfully graded over a (necessarily abelian) group $G$ and suppose $\cB_e \cap \cB_{pt}$ is symmetric. Then a braided $G$-zesting datum $(\lambda, \nu, t)$ of $\cB$ is equivalent to that of a $G$-crossed braided zesting datum $(\lambda, \nu)$ together with a trivialization of the zested $G$-action $\eta$ on $\cB^{(\lambda,\nu)}$ and there is an equivalence of braided tensor categories $\cB^{(\lambda, \nu, t)} \simeq \cB^{(\lambda,\nu)}$. Moreover, the obstructions to braided zesting of $\cB$ are precisely the obstructions to defining a trivialization $\eta$ of the zested $G$-action on $\cB^{(\lambda,\nu)}$.
\end{theorem}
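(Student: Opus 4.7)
The plan is to exhibit an explicit dictionary between braided zesting data $(\lambda,\nu,t)$ (with $j$ trivial) and pairs consisting of a $G$-crossed braided zesting datum $(\lambda,\nu)$ together with a trivialization $\eta$ of the resulting zested $G$-action on $\cB^{(\lambda,\nu)}$. First, view $\cB$ as a $G$-crossed braided fusion category with trivial $G$-action as in Example \ref{ex: braided as G-crossed}; this makes sense because $G$ is abelian. Given a $G$-crossed braided zesting datum $(\lambda,\nu)$ (which is the same thing as an associative zesting datum, since the $G$-action is trivial), Theorem \ref{thm:gcrossedzesting} yields a $G$-crossed braided fusion category $\cB^{(\lambda,\nu)}$ whose zested $G$-action on an object $Y_h \in \cB_h$ takes the simple form
\[ Y_h^{g^{\lambda}} \;=\; Y_h \otimes \lambda(h,g) \otimes \lambda(g,h)^{*}, \]
using that $G$ is abelian and that the original action is trivial. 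In particular the zested action is non-trivial in general even though the original was trivial.

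Next, a trivialization $\eta_g : T_g^{(\lambda,\nu)} \to \Id_{\cB^{(\lambda,\nu)}}$ restricted to a simple object $Y_h$ provides an isomorphism
\[ Y_h \otimes \lambda(h,g)\otimes \lambda(g,h)^{*} \;\xrightarrow{\sim}\; Y_h, \]
which, after absorbing $Y_h$ (using that $\cB^{(\lambda,\nu)}$ is rigid and $\lambda(g,h)$ is invertible) and dualizing, is the same as an isomorphism $t_{g,h}:\lambda(g,h)\to\lambda(h,g)$. So the data of $\eta$ and the data of $t$ determine each other. The first step is to write out this correspondence carefully and check naturality (the trivialization condition that $\eta_g$ is a natural transformation, not just a family of isomorphisms, is what forces $t_{g,h}$ to be independent of the object $Y_h$, and uses the fact that the pointed subcategory $\cB_e \cap \cB_{pt}$ containing the $\lambda(g,h)$ is symmetric, so no extra twisting factor enters here).

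The heart of the proof is then the verification that the two axioms of Definition \ref{def:trivialization} translate precisely into the two braided zesting equations of Definition \ref{def:braidedzesting}(b). Condition (1) of the trivialization, $\eta_h \circ \eta_g^h = \eta_{gh}\circ \gamma_{g,h}$, involves the zested compositor $\gamma_{g,h}^{\lambda}$ which by Equation \ref{eq:zestedcompositor} and the cyclic-case computation in Section \ref{cyclic zesting} is controlled by $\nu$. Substituting the dictionary $\eta \leftrightarrow t$ reduces this to the first braided zesting equation relating $t$ and $\nu$. Similarly, condition (2), $\eta_g = (\eta_g \otimes \eta_g)\circ \mu_g$, involves the zested tensorator $\mu_g^{\lambda}$ from Equation \ref{eq:zestedtensorator}; it produces the second braided zesting equation, with the factor $\omega(g_1,g_2)(g_3) = \chi_{\lambda(g_1,g_2)}(g_3)$ appearing precisely from the braiding of $\lambda(h,g)$ past $Y_h$ hidden inside $\mu_g^{\lambda}$. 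The assumption that $\cB_e \cap \cB_{pt}$ is symmetric is what ensures that the full obstruction $j$ from Definition \ref{def:braidedzesting}(b) can be taken trivial, so the constants $\omega$ that appear are exactly the doubled-braiding scalars $\chi_{\lambda(g_1,g_2)}$ with no extra $j$-correction. This is the main technical step and where the bulk of the computation lives; it can be done diagrammatically by combining the pictures in Theorem \ref{thm:gcrossedzesting} with the defining diagrams \ref{eq:trivializationcondition1} and \ref{eq:trivializationcondition2} of a trivialization.

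Finally, once the dictionary is established, the braiding produced from $\eta$ by Equation \ref{eq:braidingfromtrivialization} must be identified with the braided zesting braiding of Equation \ref{eq:zestedbraiding}. This is a direct computation: substituting the explicit $\eta_h$ built from $t$ into the trivialization braiding formula $c^{(\eta)}$, and simplifying using that the original $G$-action on $\cB$ is trivial, yields exactly the braided zesting braiding, showing $\cB^{(\lambda,\nu,t)} \simeq \cB^{(\lambda,\nu)}$ as braided fusion categories. The obstruction statement then follows tautologically: a braided zesting of $\cB$ over $(\lambda,\nu)$ exists if and only if some $t$ satisfying the braided zesting equations exists, which by the dictionary holds if and only if $\cB^{(\lambda,\nu)}$ admits a trivialization of its zested $G$-action. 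The main obstacle will be matching signs/conventions and keeping track of all the associators and $\nu$-factors in the diagrammatic translation of the two trivialization axioms into the two braided zesting equations; pivotality of $\cB_{pt}$ (Remark \ref{rmk:technical}) and strictness reductions as in the proof of Lemma \ref{lem:equivariantization} considerably simplify the bookkeeping.
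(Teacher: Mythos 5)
Your proposal follows essentially the same route as the paper's proof: view $\cB$ as a trivially $G$-crossed braided category, construct $\eta^{(\lambda,t)}$ from $t$ (and conversely recover $t$ from a trivialization), and check that the two trivialization axioms reduce to the two braided zesting equations, with the hypothesis that $\cB_e \cap \cB_{pt}$ is symmetric eliminating the residual double-braiding factors. One minor correction: the paper takes $j$ trivial by standing convention and uses the symmetric hypothesis to trivialize $\chi$ (the double braidings among the $\lambda(g,h)$, which are the only objects appearing in the obstruction diagrams), rather than using it to trivialize $j$ as you suggest --- but this does not affect the structure or validity of the argument.
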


\begin{proof}
Begin by regarding a $G$-graded braided fusion category $\cB$ as a $G$-crossed braided fusion category equipped with the trivial action as in Example \ref{ex: braided as G-crossed}. If $\lambda \in Z^2(G,\Inv(\cB_e))$, then $\nu \in C^3(G,\mathds{k}^{\times})$ defines an associative zesting of $\cB$ if and only if $\nu$ defines a $G$-crossed associative zesting of $\cB$ as a $G$-crossed braided category. This follows from Remark \ref{rem:reverse zesting}. By Theorem \ref{thm:gcrossedzesting} $\cB^{(\lambda,\nu)}$ is a $G$-crossed braided tensor category with $G$-action $X_g^h = X_g \otimes \lambda(g,h) \otimes \lambda(h,g)^*$ and $\mu_g$, $\gamma_{g,h}$ as in Equations \ref{eq:zestedtensorator} and \ref{eq:zestedcompositor}.

Now we claim that there exists a braided zesting $\cB^{(\lambda,\nu,t)}$ if and only if the $G$-crossed braided category $\cB^{(\lambda,\nu)}$ admits a trivialization and hence a braiding.

Suppose $t_{g,h}:\lambda(g,h) \to \lambda(h,g)$ is a natural isomorphism for all $g, h \in G$ satisfying the braided zesting conditions and define isomorphisms $\eta^{(\lambda,t)}$
\begin{align*}
\begin{tikzpicture}[line width=1,scale=.75,baseline=1.25cm,looseness=2]
\draw (-.75,1) node[left] {$\eta^{(\lambda,t)}(X_g):=$};
\draw (-.25,0) node[below] {$X_g$}--(-.25,2) node[above] {$X_g$};
\draw (1.5,2) to [out=-90, in=-90] (3,2);
\draw[fill=white] (2.25,1) circle (.5) node {$t_{g,h}$};
\draw (1.25,2) node[above] {$\lambda(g,h)$};
\draw (3.25,2) node[above] {$\lambda(h,g)^*$};
\end{tikzpicture}
\end{align*}
for all $X_g \in \cB_g$.

Substituting in the pictures for $\eta$, $\mu$, and $\gamma$ one finds that the defining Equations \ref{eq:trivializationcondition1} and \ref{eq:trivializationcondition2} of a trivialization from Definition \ref{def:trivialization} hold only if the following diagrams are identities.
\begin{align*}
\begin{tikzpicture}[line width=1,scale=.4,yscale=1, baseline=-.5cm, looseness=2]
\draw [looseness=2](14,0)--(14,11) to [out=90, in=90] (16,11)--(16,0);
\draw[looseness=2] (10,10.5) --(10,11) to [out=90,in=90] (12,11)--(12,10.75);
\draw (12,9.25)-- (12,7.75);
\draw (12,6.25)-- (12,4.75);
\draw[looseness=1.5] (2,3.5) -- (2,3) to [out=-90,in=-90] (12,3)--(12,3.25);
\draw (2,4) --(2,12);
\draw (4,6.5)--(4,12);
\draw[looseness=2] (4,6.5) to [out=-90,in=-90] (6,6.5)--(6,12);
\draw[fill=white] (5.5,6.5) rectangle node {$\scriptstyle g,k,h$} (14.5,7.5);
\draw (14.5,7.5) node[right] {\tiny$-1$};
\draw[looseness=2] (8,12) --(8,9.5) to [out=-90,in=-90] (10,9.5);
\draw[fill=white] (9.5,9.5) rectangle node {$\scriptstyle g,h,k$} (14.5,10.5);
\draw[fill=white] (1.5,3.5) rectangle node {$\scriptstyle k,g,h$} (14.5,4.5);
\draw (2,12) to [out=90, in=90] (4,12);
\draw (6,12) to [out=90, in=90] (8,12);
\draw (14,0) to [out=-90, in=-90] (16,0);
\draw[fill=white] (3,13) circle (.8) node {\tiny  $k,g$};
\draw (3.5,13.5) node[right] {$^{-1}$};
\draw[fill=white] (7,13) circle (.8) node {\tiny  $k,h$};
\draw (7.5,13.5) node[right] {$^{-1}$};
\draw[fill=white] (15,-1) circle (.8) node {\tiny $k,gh$};
\end{tikzpicture} \qquad & \qquad  \begin{tikzpicture}[line width=1,scale=.4,yscale=-1,baseline =-6cm,looseness=2]
\draw (10.5,0)--(10.5,10.5);
\draw[looseness=1.5] (8,-1)--(8,1.5) \br (4,5.5)--(4,14);
\draw[looseness=1.5] (8,5.5) to [out=-90, in=-90](6,5.5)--(6,14);
\draw[looseness=2] (12.5,0) -- (12.5,13) to [out=90, in=90] (10.5,13)--(10.5,10.5);
\draw[white, line width=10, looseness=.75] (8,6.5) \br (0,10)--(0,14);
\draw[looseness=.75] (8,6.5) \br (0,10)--(0,14);
\draw[white, line width=10,looseness=1] (8,11.5) to [out=-90,in=-90] (2,11.5)--(2,14);
\draw[looseness=1] (8,15)--(8,11.5) to [out=-90,in=-90] (2,11.5)--(2,14);
\draw[fill=white] (7,5.5) rectangle node {$\scriptstyle k,h,g$} (11,6.5);
\draw (11.,5.5) node[right] {\tiny$-1$};
\draw[fill=white] (7,.5) rectangle node {$\scriptstyle k,g,h$} (11,1.5);
\draw[fill=white] (7,11.5) rectangle node {$\scriptstyle h, k, g$} (11,12.5);
\draw (10.5,0) to [out=-90,in=-90] (12.5,0);
\draw[fill=white] (11.5,-1) circle (.8) node {\tiny  $kh,g$};
\draw (12,-1.5) node[right] {$^{-1}$};
\draw (4,14) to [out=90,in=90] (6,14);
\draw[fill=white] (5,15) circle (.8) node {\tiny  $k,g$};
\draw (0,14) to [out=90,in=90] (2,14);
\draw[fill=white] (1,15) circle (.8) node {\tiny  $h,g$};
\end{tikzpicture}
\end{align*}

These diagrams are isotopic to (traces of) the obstructions to braided zesting from \cite[Figure 13 and 14]{DGPRZ} in the case where $j$ and $\chi$ are trivial. (Triviality of $\chi$ follows from the assumption that $\cB_e \cap \cB_{pt}$ is symmetric.) Thus $\eta^{(\lambda,t)}$ defines a trivialization of the $G$-action on $\cB^{(\lambda,\nu)}$ and $\cB^{(\lambda,\nu)}$ is braided.

The other direction is similar; one needs only to observe that a necessary and sufficient condition for $T^{(\lambda,\nu)}$ to admit a trivialization is the existence of natural isomorphisms $\lambda(g,h) \cong \lambda(h,g)$ for all $g,h \in G$. Then one can check that such isomorphisms satisfy the braided zesting conditions.
\end{proof}

In the special context of cyclic $A$-crossed zesting with values in a Tannakian subcategory $\Rep(A,1)\subset \cB_e$ one has the following explicit description of when a trivialization exists:
\begin{proposition}[Adapted from Proposition 6.3 and 6.4 \cite{DGPRZ}]
\label{prop:cyclicTannakianzesting}
Let $\mathcal{B}$ be an $A$-graded braided fusion category with $A=\langle g \rangle\cong \Z/N\Z$, and $\Rep(A,1)\subset \cB_e$ a Tannakian subcategory of the trivial component. Fix $q$ a primitive $N$th root of unity. The equivalence classes of braided zestings of $\mathcal{B}$ with $j=\id$ are parameterized by triples $(a,b,s)$ where $(a,b) \in \Z/N \Z \times \Z / N \Z$ with $a = 2b \mod N$ and $s^N=q^{-b}$. Explicit braided zesting data $(\lambda, \nu, t)$ for each admissible $(a,b)$ and choice of $s$ is given by (\ref{cyclic2cocycle}) and (\ref{cyclic3cochain}) as above and
\begin{align}
    t_{a,b}(i,j) = s^{-ij} \id_{\lambda_a(i,j)}
\end{align}
where $0 \le i,j,k < N$. 
\end{proposition}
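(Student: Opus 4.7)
The plan is to substitute the explicit cyclic formulas into the braided zesting conditions of Definition \ref{def:braidedzesting} and reduce them to arithmetic constraints in $\mathds{Z}/N\mathds{Z}$. Because $\Rep(A,1)$ is Tannakian, $\Inv(\Rep(A,1))$ is a trivial $A$-module, so by the discussion in Subsection \ref{cyclic zesting} every associative zesting datum taking values in $\Rep(A,1)$ is equivalent to one of the form $(\lambda_a,\nu_b)$ given by (\ref{cyclic2cocycle}) and (\ref{cyclic3cochain}). The Pontryagin--Whitehead obstruction vanishes automatically since $H^4(A,\Bbbk^\times)=0$, so every such $(a,b)\in\mathds{Z}/N\mathds{Z}\times\mathds{Z}/N\mathds{Z}$ really arises.

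Next I make the ansatz $t_{a,b}(i,j)=s^{-ij}\id_{\lambda_a(i,j)}$, which is natural because $\lambda_a$ is symmetric ($\lambda_a(i,j)=\lambda_a(j,i)$) so $t$ amounts to choosing a scalar, and the quadratic exponent is forced by the normalization $t_{a,b}(0,j)=t_{a,b}(i,0)=1$ together with the bilinearity in $(i,j)$ demanded by the braided zesting conditions when one restricts to the trivially-braided pointed symmetric subcategory $\Rep(A,1)$. Plugging $(\lambda_a,\nu_b,t_{a,b})$ with $j=\id$ into the first braided zesting condition and cancelling the $\nu_b$-factors using the symmetry of $\nu_b$ in its first two arguments, the equation collapses to a scalar identity whose $q$- and $s$-content forces the single congruence $a\equiv 2b\pmod N$.

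For the second condition the Tannakian hypothesis is the crucial input: since $\lambda_a(i,j)\in\Rep(A,1)$, the squared braiding $\chi_{\lambda_a(i,j)}$ is the identity on every object of $\cB$, and with $j=\id$ the factor $\omega(g_1,g_2)(g_3)$ of Definition \ref{def:braidedzesting}(b) becomes $1$. Expanding the remaining identity in terms of the piecewise formulas (\ref{cyclic2cocycle})--(\ref{cyclic3cochain}) and matching exponents of $q$ (coming from $\nu_b$) against exponents of $s$ (coming from $t_{a,b}$), the constraint becomes $s^N=q^{-b}$. Reversing these computations shows that any triple $(a,b,s)$ satisfying $a\equiv 2b\pmod N$ and $s^N=q^{-b}$ defines a braided zesting datum, and distinct such triples give either cohomologically distinct $(\lambda,\nu)$ or braidings which differ by a non-trivial scalar that cannot be absorbed into $\nu_b$.

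The main obstacle will be the discrete bookkeeping in the two scalar reductions. Each braided zesting condition involves six instances of $\nu_b$ together with an implicit crossing of $\lambda$-strands, and the piecewise definitions of $\lambda_a$ and $\nu_b$ force one to case-split on whether sums of indices exceed $N$. Showing that, across all residue choices, the two conditions collapse uniformly to the clean pair $a\equiv 2b\pmod N$ and $s^N=q^{-b}$ is the key computation; once this is done, the parameterization and the explicit formula for $t_{a,b}$ follow at once.
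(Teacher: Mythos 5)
The paper itself offers no argument here: it states the result as an adaptation of Propositions 6.3 and 6.4 of \cite{DGPRZ} and ends with \qed. Your strategy of verifying the two braided zesting conditions of Definition \ref{def:braidedzesting} directly against the cyclic formulas is the natural one and is presumably what the cited reference does, so the approach is not the problem. The problem is a concrete error in how you handle the second condition.

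You assert that because $\Rep(A,1)$ is Tannakian, ``the squared braiding $\chi_{\lambda_a(i,j)}$ is the identity on every object of $\cB$,'' so that $\omega(g_1,g_2)(g_3)=1$. That is false: Tannakian only means the objects of $\Rep(A,1)$ are transparent \emph{within} $\Rep(A,1)$; transparency in all of $\cB$ would put $\lambda_a(i,j)$ in the M\"uger center of $\cB$, a far stronger hypothesis. What the setup $\Rep(A,1)\subset\cB_e$ actually supplies is that the double braiding of $h\in\Inv(\Rep(A,1))\cong\widehat{A}$ with $X\in\cB_{g_3}$ is the character pairing, so $\chi_{\lambda_a(i,j)}(X_{g_3})=q^{a g_3[i+j\ge N]}$ in suitable coordinates. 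This nontrivial factor is precisely what produces the constraint $a\equiv 2b\pmod N$: writing everything as scalars, the first condition (after cancelling $\nu_b(g_1,g_2,g_3)=\nu_b(g_2,g_1,g_3)$) reduces to $\bigl(q^{b}s^{N}\bigr)^{g_1[g_2+g_3\ge N]}=1$, i.e.\ $s^N=q^{-b}$, and then Equation (\ref{eq:braidedzestingcondition2}) reduces to $\bigl(q^{a}s^{N}q^{-b}\bigr)^{g_3[g_1+g_2\ge N]}=\bigl(q^{a-2b}\bigr)^{g_3[g_1+g_2\ge N]}=1$, i.e.\ $a\equiv 2b$. With your trivialized $\chi$ the same computation would instead output $s^N=q^{-b}$ and $s^N=q^{b}$, hence $q^{2b}=1$ and no constraint on $a$ at all, contradicting the statement. (You have also swapped which condition yields which constraint, a symptom of the same misreading of $\omega$.)

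Secondary, smaller gaps: the claim that the quadratic exponent in $t_{a,b}(i,j)=s^{-ij}$ is ``forced by bilinearity'' should really be extracted from the first condition itself (which says $g_3\mapsto t_{g_1,g_3}$ is a homomorphism up to the explicit $\nu_b$ correction), and the final assertion that distinct admissible triples give inequivalent braided zestings needs an argument about when data are cohomologous rather than a bare statement.
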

\qed

Continuing with our running $SU(3)_3$ example (Exs. \ref{ex:su33},\ref{ex:Gcrossedzestingsu33} we have the following:
\begin{example}
    Out of the 9 $\Z/3\Z$-crossed associative zestings $SU(3)_3^{(\lambda_a,\nu_b)}$ there are 3 that admit trivializations, and hence (honest) braidings. There are 3 choices of braidings for each. Here we display the particulars for completeness.
    \[\begin{array}{l|c|c|l}
(a,b,s) & \lambda_a(i,j) & \nu_b(i,j,k) & t_{a,b}(i,j) \\
\hline
\hline
    (0,0,1) &  \mathds{1} & 1 &  1 \\
    (0,0,q) &  \mathds{1} & 1 &  q^{-ij} \\
    (0,0,q^2) &  \mathds{1} &  1 & q^{-2ij} \\ 
       \hline
    (1,2,\zeta^2) &  g \text{ when } i+j \ge 3 & q^{2k} \text{ when } i+j \ge 3 &\zeta^{-2ij} \\
       (1,2,\zeta^8) &  g \text{ when } i+j \ge 3&q^{2k} \text{ when } i+j \ge 3  & \zeta^{-8ij}\\
       (1,2,\zeta^{14}) & g \text{ when } i+j \ge 3& q^{2k} \text{ when } i+j \ge 3 & \zeta^{-14ij}\\
       \hline
       (2,1,\zeta^{-2} ) & g^2 \text{ when } i+j \ge 3 &q^{k} \text{ when } i+j \ge 3  & \zeta^{2ij} \\
     (2,1, \zeta^{-8} ) &  g^2 \text{ when } i+j \ge 3  & q^{k} \text{ when } i+j \ge 3 &\zeta^{8ij}  \\
     (2,1,\zeta^{-14} ) & g^2 \text{ when } i+j \ge 3  &q^{k} \text{ when } i+j \ge 3  & \zeta^{14ij}\\
\end{array}\]\\

\end{example}

\subsection{Zested rigidity, pivotality, and trace} 
\label{sec:zestedstructures}
Often one is interested in $G$-crossed braided categories with additional properties and structure. We show that the property of rigidity is preserved by $G$-crossed zesting (Section \ref{sec:zestedrigidity}) and that there is a natural pivotal structure on the $G$-crossed zesting of a pivotal fusion category (Section \ref{sec:zestedpivotality}). However, we find there is a mild obstruction to sphericality coming from the quantum dimensions and twists of the invertible objects used in the zesting. In contrast to braided zesting the traces of morphisms are (almost) unchanged.
              
\subsubsection{Rigidity of \texorpdfstring{$G$}{G}-crossed associative zesting}    \label{sec:zestedrigidity}
Let $(\lambda, \nu)$ be a $G$-crossed zesting datum of a $G$-crossed braided fusion category $\cC$. The dual objects and their evaluation and coevaluation morphisms in $\cC^{(\lambda,\nu)}$ can be defined in the same manner as in associative zesting \cite[Section 3.3]{DGPRZ} by replacing the half-braidings with $G$-crossed braidings. Recall that $\lambda$ is assumed to be normalized so that $\lambda(g,e) = \lambda(e,g) = \bf{1}$ and that $\cC_{pt}$ is assumed to be strictly spherical so that $\lambda(g,h)=\lambda(g,h)^{**}$ for all $g,h \in G$ per Remark \ref{rmk:technical}.

For $X_g \in \cC_g$ with dual object $X_g^* \in \cC_{g^{-1}}$, the zested dual $\overline{X_g} \in \cC_{g^{-1}}$ is given by

\begin{align}
\overline{X_g}:= X_g^{*} \otimes \lambda(g,g^{-1})^*.
\end{align}

Using the isomorphisms $\nu^g:=\nu_{g,g^{-1},g}: \lambda(g,g^{-1})^g \to \lambda(g^{-1},g)$ for $g \in G$, suitable evaluation morphisms $\phi_{X_g}: \overline{X_g} \overset{\lambda}{\otimes} X_g \to \bf{1}$ and coevaluation morphisms $\rho_{X_g}: \textbf{1} \to X_g \overset{\lambda}{\otimes}  \overline{X_g}$ satisfying the rigidity axioms are given by the following diagrams.

\begin{eqnarray} 
\label{eval}
\phi_{X_g}  =& \begin{tikzpicture}[scale=1.5, line width=1,baseline=-10]
\draw[looseness=1.5] (0,0) node[above] {$X_g^{*}$} to [out=-90, in=-90] (2,0) node[above] {$\phantom{\overline{X_g}}X_g\phantom{\overline{X_g}}$};
\draw[looseness=1.5, white, line width=10] (1,0) to [out=-90, in=-90] (3,0);
\draw (1,0) node[above] {$\lambda(g,g^{-1})^{*}$};
\draw[looseness=1.5] (1,0)  to [out=-90, in=-90] (3,0);
\draw (3,0) node[above] {$\phantom{\lambda(g)^{*}}\lambda(g^{-1},g)\phantom{\lambda(g)^{*}}$};
\draw[fill=white] (2.7,-.1) rectangle node {\scriptsize $\nu^g$} (3.1, -.5);
\draw (3.1, -.2) node[right]  {$^{-1}$};
\end{tikzpicture} \\
\rho_{X_g} =& 
\begin{tikzpicture}[scale=1.5,line width=1,baseline=0]
\draw[looseness=2] (0,0) node[below] {$\phantom{X_g^{*}}X_g\phantom{X_g^{*}}$} to [out=90, in=90] (1,0) node[below] {$X_g^{*}$};
\draw (1.875,0) node[below] {$\lambda(g,g^{-1})^{*}$};
\draw[looseness=2] (2,0) to [out=90, in=90] (3,0)  ;
\draw (3.125,0) node[below] {$\phantom{\lambda(g)^{*}}\lambda(g,g^{-1})\phantom{\lambda(g)^{*}}$};
\end{tikzpicture}
\label{coeval}
\end{eqnarray}

\begin{remark}
The crossing in $\phi_{X_g}$ is opposite from the crossing used in \cite[Equation 3.7]{DGPRZ} because of the convention we have chosen for the $G$-crossed braiding in Definition \ref{def:gcrossedbraided}, see Remark \ref{rem:reverse zesting}.
\end{remark}

\subsubsection{Pivotality, trace, and sphericality of \texorpdfstring{$G$}{G}-crossed associative zesting}
\label{sec:zestedpivotality}
Without loss of generality, we may assume that $\cC$ is strict pivotal. Under zesting the pivotal structure consisting of tensor natural isomorphisms 
\[ \psi_{X_g}: X_g \to \overline{\overline{X_g}} \]
is given by the following picture
\begin{align}
    \label{defn:zestedpivotal}
\psi_{X_g}&= 
\begin{tikzpicture}[line width=1,baseline=30]
\draw (1,0) node[below] {$X_g^{**}$}--(1,3) node[above] {$X_g$};
\draw[white, line width=10, looseness=1.5] (-.5,0)--(-.5,1) to [out=90,in=90] (2.5,1) --(2.5,0);
\draw[looseness=1.5] (-.5,0) node[below] {$\lambda(g,g^{-1})$}--(-.5,1) to [out=90,in=90] (2.5,1) --(2.5,0)node[below] {$\lambda(g^{-1},g)^{*}$};
\draw[fill=white] (-.5-.33, .66) rectangle node {\small $\nu^{g^{-1}}$} (-.5+.33, 1.33);
\end{tikzpicture}.
\end{align}
\begin{remark}Once again the diagram is essentially the same as for braided zesting of pivotal fusion categories, except the crossing in $\psi_{X_g}$ is opposite from the crossing used in \cite[Equation 5.6]{DGPRZ} and the isomorphism $j$ does not appear.
\end{remark}

Now let $\dim_{\psi}(X_g)=\Tr^{\phi,\rho}(\psi_{X_g})$ be the left categorical trace of $\psi_{X_g} \in \Hom(X_g, \overline{\overline{X_g}})$ with respect to $\psi$. Then $\cC^{(\lambda,\nu)}$ will be spherical if $\dim_{\psi}(X_g)=\dim_{\psi}(\overline{X_g})$ for all $X_g \in \cC_g$ \cite[Definition 4.7.14]{EGNObook}.
We see that 

\begin{align}
\label{zesteddimobj}
\dim_{\psi}(X_g)&=
\scalebox{.7}{\begin{tikzpicture}[line width=1,baseline=30]
 \draw (1,0) node[left] {}--(1,3) node[left] {$X_g$};
    \draw[white, line width=10, looseness=1.5] (-.5,0)--(-.5,1) to [out=90,in=90] (2.5,1) --(2.5,0);
    \draw[looseness=1.5] (-.5,0) node[left] {\small $\lambda(g,g^{-1})$}--(-.5,1) to [out=90,in=90] (2.5,1) --(2.5,0)node[left] {};
    \draw[fill=white] (-.5-.33, .66) rectangle node {\small $\nu^{g^{-1}}$} (-.5+.33, 1.33);
    \begin{scope}[xshift=1cm,yshift=3cm,scale=2]
    \zestedcap
    \end{scope}
    \draw[looseness=1.5] (1,0) to [out=-90,in=-90](3,0);
    \draw[looseness=1.5] (-.5,0) to [out=-90,in=-90](4,0);
    \draw[looseness=1.5, white, line width=10] (2.5,0) to [out=-90,in=-90] (6,0) ;
    \draw[looseness=1.5] (2.5,0) to [out=-90,in=-90] (6,0) ;
    \draw[fill=white] (6-.33,-.66) rectangle node {\small $\nu^{g^{-1}}$} (6+.33,0);
    \draw (6.33, -.2) node[right]  {$^{-1}$};
    \foreach \x in {3,4,6}
    {\draw (\x,0)--(\x,3);}
    \draw (6,3) node[right]{\small $\lambda(g,g^{-1})$};
        \end{tikzpicture}}
        &= \dim(X_g)\theta_{\lambda(g,g^{-1})} 
\end{align}

and 

\begin{align}
\label{zesteddimobjdual}
\dim_{\psi}(\overline{X_g})&=
\scalebox{.7}{\begin{tikzpicture}[line width=1,baseline=30]
 \draw (1,0) node[left] {}--(1,3) node[left] {$X_g^*$};
 \draw[looseness=1.5] (1.5,0) node[right] {}--(1.5,3) to [out=90,in=90] (2.75,3)--(2.75,0) to [out=-90,in=-90] (1.5,0);
 \draw[<-, line width=.5] (2.75, 3.5)-- (3,4.25) node[above] {\small $\lambda(g,g^{-1})$};
    \draw[white, line width=10, looseness=1.5] (-.5,0)--(-.5,1) to [out=90,in=90] (2.5,1) --(2.5,0);
    \draw[looseness=1.5] (-.5,0) node[left] {\small $\lambda(g^{-1},g)$}--(-.5,1) to [out=90,in=90] (2.5,1) --(2.5,0)node[left] {};
      \draw[fill=white] (-.5-.33, .66) rectangle node {\small $\nu^g$} (-.5+.33, 1.33);
    \begin{scope}[xshift=1cm,yshift=3cm,scale=2]
    \zestedcap
    \end{scope}
    \draw[looseness=1.5] (1,0) to [out=-90,in=-90](3,0);
    \draw[looseness=1.5] (-.5,0) to [out=-90,in=-90](4,0);
    \draw[looseness=1.5, white, line width=10] (2.5,0) to [out=-90,in=-90] (6,0) ;
    \draw[looseness=1.5] (2.5,0) to [out=-90,in=-90] (6,0) ;
    \draw[fill=white] (6-.33,-.66) rectangle node {\small $\nu^g$} (6+.33,0);
    \draw (6.33, -.2) node[right]  {$^{-1}$};
    \foreach \x in {3,4,6}
    {\draw (\x,0)--(\x,3);}
    \draw (6,3) node[right]{\small $\lambda(g^{-1},g)$};
        \end{tikzpicture}}
        &= \dim(X_g^*)\theta_{\lambda(g^{-1},g)}.
\end{align}

where have used that $\cC_{pt}$ is spherical braided fusion and hence ribbon fusion to identify the twists $\theta_{\lambda(g,g^{-1})}$ and $\theta_{\lambda(g^{-1},g)}$. 

If $\cC$ is spherical then $\dim(X_g)=\dim(X_g^*)$ and we can conclude that its zesting is also spherical when 
\begin{align}
\label{sphericalcondition}
\theta_{\lambda(g,g^{-1})}=\theta_{\lambda(g^{-1},g)}
\qquad \text{ for all } g \in G.
\end{align}

\begin{remark}
Note that if $\cC$ is a unitary spherical $G$-crossed braided fusion category and $\lambda$ is a symmetric 2-cocycle then its $G$-crossed zestings are always spherical.
\end{remark}

Now one can check that the left trace of a morphism $s \in \Hom(X_g,X_g)$ becomes

\begin{align}
    \Tr^{\phi,\rho,\psi}(s) &= 
    \scalebox{.7}{\begin{tikzpicture}[line width=1,baseline=30]
 \draw (1,0) node[left] {}--(1,3);
    \draw[white, line width=10, looseness=1.5] (-.5,0)--(-.5,1) to [out=90,in=90] (2.5,1) --(2.5,0);
    \draw[looseness=1.5] (-.5,0) node[left] {\small $\lambda(g,g^{-1})$}--(-.5,1) to [out=90,in=90] (2.5,1) --(2.5,0)node[left] {};
    \draw[fill=white] (-.5-.33, .66) rectangle node {\small $\nu^{g^{-1}}$} (-.5+.33, 1.33);
    \begin{scope}[xshift=1cm,yshift=3cm,scale=2]
    \zestedcap
    \end{scope}
    \draw[looseness=1.5] (1,0) to [out=-90,in=-90](3,0);
    \draw[looseness=1.5] (-.5,0) to [out=-90,in=-90](4,0);
    \draw[looseness=1.5, white, line width=10] (2.5,0) to [out=-90,in=-90] (6,0) ;
    \draw[looseness=1.5] (2.5,0) to [out=-90,in=-90] (6,0) ;
    \draw[fill=white] (6-.33,-.66) rectangle node {\small $\nu^{g^{-1}}$} (6+.33,0);
    \draw (6.33, -.2) node[right]  {$^{-1}$};
    \foreach \x in {3,4,6}
    {\draw (\x,0)--(\x,3);}
    \draw (6,3) node[right]{\small $\lambda(g,g^{-1})$};
\draw[fill=white] (1-.33,2.66) rectangle node {$s$} (1+.33,3.33);
        \end{tikzpicture}} 
        &= \theta_{\lambda(g,g^{-1})} \Tr(s)
\end{align}
By Equation \ref{sphericalcondition} that when the zesting is spherical,

\[ \Tr^{\phi,\rho,\psi}(s)= \theta_{\lambda(g^{-1},g)} \Tr(s)\]

The right trace is drawn analogously and is equal to the right trace when the zested category is spherical. 

\section{Applications and Examples}\label{sec:appsandexes}
This section begins with an analysis of the transformation of Tambara-Yamagami categories under $\Z/N\Z$-crossed zesting. Then in Section \ref{sec:gcrossedmodulardata} we recall the definition of the $G$-crossed modular data associated with a spherical $G$-crossed braided fusion category together with some examples. Section \ref{sec:zestedmodulardata} discusses the transformation of this data under $G$-crossed zesting. 

\begin{example}[Theorem 4.9 of \cite{Galindo2022}]
\label{ex:TY}
Let $A$ be a finite abelian group, $\chi$ a non-degenerate symmetric bicharacter on $A$, and $\tau \in \{\pm\frac{1}{\sqrt{|A|}}\}$. The Tambara-Yamagami category
$\TY(A,\chi,\tau)$ has the structure of a $\Z/2\Z$-crossed braided fusion category 
$$\TY(A,\chi,\tau)=\TY(A,\chi,\tau)_0 \oplus \TY(A,\chi,\tau)_1$$ where the simple objects in the $\Z/2\Z$-graded components  $\TY(A,\chi,\tau)_0$ and $\TY(A,\chi,\tau)_1$ are the elements of $A$ and $m$ respectively, with fusion rules $a\otimes b=ab$ and $m\otimes m=\bigoplus_{a\in A}a$.

The choices of $\Z/2\Z$-crossed braided structures on $\TY(A,\chi,\tau)$ are in correspondence with pairs $(q,\alpha)$, where $q:A \to \mathds{k}^\times$ is a quadratic form and $\alpha \in \mathds{k}^\times$ with

\begin{align*}
\chi(a,b) = \frac{q(a)q(b)}{q(ab)} \quad \text{ for all } a,b \in A &\quad \text{ and } \quad \alpha^2= \tau (\sum_{a\in A} q(a)).
\end{align*}

The strict $\Z/2\Z$-action by inversion $T_{{1}}(a)=a^{-1}$ on $\TY(A,\chi,\tau)_0$ and $T_1(m)=m$ gives a $\Z/2\Z$-crossed braided structure with $\Z/2\Z$-crossed braiding

\begin{align}
c_{a,b} = \chi(a,b) \id_{ab} \quad &\quad c_{a,m}=c_{m,a} = q(a)\id_m  \quad &\quad c_{m,m} = \alpha (\bigoplus_{a \in A} q(a)^{-1} \id_a).  
\end{align} 
 
{Fix a choice of a quadratic form $q$ and $\alpha$ as above. Let us determine the possible (normalized) $2$-cocycles $\lambda\in Z^2(\Z/2\Z,A)$. If  $\lambda(1,1)=h$ then the $2$-cocycle condition implies $$\lambda(1,1)^1\otimes\lambda(0,1)=h^{-1}=\lambda(1,1)\otimes\lambda(1,0)=h.$$ This implies that $h^2=e$. Fix such an $h\in A$ (necessarily $h=e$ if $|A|$ is odd). Since $m\otimes h=m$ the zested fusion rules are unchanged. In particular, the zested category will be a Tambara-Yamagami category again, possibly with different $\Z/2\Z$-action maps $\mu$ and $\gamma$ and different $\Z/2\Z$-braiding.  
Notice that we may obtain a category with a non-strict action, but by \cite{Galindo2022} it will be equivalent to a category with a strict action as described above.

Next we  determine the possible values of the 3-cochain $\nu$, which is trivial except for $\nu_{1,1,1}=x\cdot \id_h$. Taking $g_i=1\in\Z/2\Z$ for $i=1,\ldots,4$ in (\ref{eq:associativeGzesting}) we find:
$x^2=\chi(h,h)=\frac{q(h)^2}{q(\mathbf{1})}=q(h)^2$.  So $x=\pm q(h)$.  If $h=e$ then $q(h)=1$, while if $h$ has order 2 then  $q(h)^2=\pm 1$.  Equipped with these choices we may employ  the results of section \ref{cyclic zesting} to construct the data.  Since we know that the result is again a Tambara-Yamagami category we leave this to the reader.} 
\end{example}

\begin{remark} In the previous example $A$ can be any finite abelian group. Explicit formulas for the skeletal data of $\TY(A,\chi, \tau)$ as a $\Z/2\Z$-crossed braided fusion category with action by inversion on $\TY(A,\chi, \tau)_0$ in the case where $A = (\Z/N\Z)^p$ for $N$ odd and $\text{gcd}(p,N)=1$ in terms of $\chi(a,b)=e^{2\pi iabp/N}$ can be found in \cite[Section XI.G.2]{BBCW}.
\end{remark}

\subsection{\texorpdfstring{$G$}{G}-crossed modular data 
}
\label{sec:gcrossedmodulardata}
Recall the modular data\footnote{We used the convention $S_{X,Y} \sim \Tr(c_{Y^*,X} \circ c_{X,Y^*})$ in \cite{DGPRZ}.} of a spherical braided (aka ribbon) fusion category consisting of the $S$- and $T$-matrices 
\begin{align}
\label{eq:modulardata}
S_{X,Y} = \frac{1}{D} \Tr(c_{Y, X^*} \circ c_{X^*,Y})= \frac{1}{D} & \begin{tikzpicture}[line width=1,scale=.5,baseline=.75cm]
\draw (0,0) \br (1,1.5);
\draw[white, line width=10] (1,0) \br (0, 1.5);
\draw (1,0) \br (0, 1.5);
\begin{scope}[yshift=1.5cm]
\draw (0,0) \br (1,1.5);
\draw[white, line width=10] (1,0) \br (0, 1.5);
\draw (1,0) \br (0, 1.5);
\end{scope}
\draw[looseness=2] (1,3) to [out=90, in =90] (2,3)--(2,0) to [out=-90, in=-90] (1,0);
\draw[looseness=2] (0,3) to [out=90, in =90] (-1,3)--(-1,0) to [out=-90, in=-90] (0,0);
\draw[->] (-1,1.5)node[left] {$X$} --(-1,1.25) ; 
\draw[->] (2,1.5) node[right] {$Y$}; 
\end{tikzpicture}  \\ T_{X,Y} = \frac{\delta_{X,Y}}{\dim(X)} \Tr(c_{X,X}) =  \frac{\delta_{X,Y}}{\dim(X)}    &\begin{tikzpicture}[line width=1,scale=.5,baseline=.375cm]
\draw (0,0) \br (1,1.5);
\draw[white, line width=10] (1,0) \br (0, 1.5);
\draw (1,0) \br (0, 1.5);
\draw[looseness=2] (1,1.5) to [out=90, in =90] (2,1.5)--(2,0) to [out=-90, in=-90] (1,0);
\draw[looseness=2] (0,1.5) to [out=90, in =90] (-1,1.5)--(-1,0) to [out=-90, in=-90] (0,0);
\draw[<-] (-1,.75) node[left] {$X$};
\end{tikzpicture} \quad =\theta_X \delta_{X,Y}
\end{align}
for $X,Y \in \Irr(\cC)$, where $D^2 = \sum_{X \in \Irr(\cC)} \dim(X)^2$ and $\theta_X$ is the twist of $X$.

The modular data are invariants of ribbon fusion categories under ribbon autoequivalences; in particular, the diagrams for $S$ and $T$ above make sense whether your category is strict or skeletal (or neither for that matter). The $S$- and $T$- matrices yield a representation of the modular group $SL(2,\mathbb{Z})$ and possess other symmetries which further constrain the form they can take. This makes the modular data an important tool for classifying modular fusion categories and handling examples, especially at low rank \cite{RSW}. \\

In a spherical $G$-crossed braided fusion category $\cC$ one can compute the trace of $c_{Y_h,(X_g^*)^h}\circ c_{X_g^*,Y_h}$ whenever it is well-defined, i.e. for those $X_g \in \cC_g$, $Y_h \in \cC_h$ with isomorphisms $X_g^h \cong X_g$ and $Y_h^g \cong Y_h$ \cite{Kirillov}. When $\cC$ is skeletal these isomorphisms become equalities and one can define a matrix $S^{(g,h)}_{X_g,Y_h}$ by labeling the usual Hopf link diagram of Equation \ref{eq:modulardata} with pairs $X_g$ and $Y_h$ satisfying $X_g^h=X_g$ and $Y_h^g=Y_g$, see \cite[Section VII]{BBCW}. In general, the isomorphisms $X_g^h \cong X_g$ and $Y_h^g \cong Y_h$ are necessary to define $S^{(g,h)}$, as originally considered in \cite{Kirillov} and depicted below in Equation \ref{eq:GcrossedS}. The matrices $S^{(g,h)}$ and $S= \oplus S^{(g,h)}$ fail to be invariant under the $G$-crossed braided autoequivalence of Definition \ref{def:G-equiv}, but when $\cC_e$ is modular $S$ still determines the fusion rules of $\cC$ through a $G$-crossed Verlinde formula. 

Similarly, when $X^g_g \cong X_g$ one can compute the ``twist" $\theta_{X_g}$. It is possible to arrange the twists into a $G$-crossed $T$-matrix so that $S$ and $T$ define a projective representation of the modular group \cite[Section VII]{BBCW}.  

Following \cite{Kirillov} and \cite{BBCW} we recall the definition of the $G$-crossed $S$-matrix and twists.

\begin{definition}
\label{def:gcrossedmodulardata}
Let $\cC$ be a spherical $G$-crossed braided fusion category with a fixed set of representatives $X_g$ of $\Irr(\cC_g)$ for all $g \in G$ and a fixed basis $\phi$ of $\Hom(X_g^h,X_g)$ for all $X_g \in \Irr(\cC_g)$ and $h \in G$. Let $\mathcal{V}_{(g,h)}:=\oplus_{X_g \in \Irr(\cC_g)} \Hom(X_g^h,X_g)$ and define $S^{(g,h)}: \mathcal{V}_{(h,g^{-1})} \to \mathcal{V}_{(g,h)}$ 
to be the linear transformation whose matrix entries are given by\footnote{In Kirillov's notation $\oplus_{X_g \in \Irr(\cC_g)} \Hom(X_g^{h^{-1}},X_g) = \mathcal{V}_{h^{-1},g}$ \cite{Kirillov}. } 

\begin{align}
\label{eq:GcrossedS}
      S^{(g,h)}_{X_g,Y_h} & =\frac{1}{D_e}\,
      \begin{tikzpicture}[line width=1,scale=.75,baseline=.75cm]
\draw (0,-1)--(0,0) \br (1,1.5);
\draw[white, line width=10] (1,0) \br (0, 1.5);
\draw (1,-1)--(1,0) \br (0, 1.5);
\begin{scope}[yshift=1.5cm]
\draw (0,0) \br (1,1.5);
\draw[white, line width=10] (1,0) \br (0, 1.5);
\draw (1,0) \br (0, 1.5);
\end{scope}
\begin{scope}[yshift=-.35cm]
\draw[fill=white] (-.35,-.35) rectangle node {$\phi$} (.35,.35);
\draw (.2, .25) node[right] {$^*$};
\begin{scope}[xshift=1cm]
\draw[fill=white] (-.35,-.35) rectangle node {$\psi$} (.35,.35);
\end{scope}
\end{scope}
\draw[looseness=2] (1,3) to [out=90, in =90] (2,3)--(2,-1) to [out=-90, in=-90] (1,-1);
\draw[looseness=2] (0,3) to [out=90, in =90] (-1,3)--(-1,-1) to [out=-90, in=-90] (0,-1);
\draw[<-] (-1,1) node[left] {$X_g$} --(-1,1.15); 
\draw[->] (2,1)node[right] {$Y_h$}; 
\end{tikzpicture} 
    \end{align}
    where $X_g \in \Irr(\cC_g)$, $Y_h \in \Irr(\cC_h)$ with $\phi: X_g^h \cong X_g$ and $\psi: Y_h^{g^{-1}} \cong Y_h$.
    
Let $\mathcal{V} = \bigoplus_{\{(g,h) | gh=hg\}} \mathcal{V}_{g,h}$.  Then the $G$-crossed $S$-matrix $S:\mathcal{V} \to \mathcal{V}$ is
\begin{align}
S &= \bigoplus_{\{(g,h) | gh=hg\}} S^{(g,h)}.
\end{align}
    
For $X_g \in \Irr(\cC_g)$ and $\pi:X_g^g \cong X_g$ the twist $\theta_{X_g}$ is given by 
    \begin{align}
    \label{eq:Gcrossedtwist}
      \theta_{X_g} & = \frac{1}{d_{X_g}}\,    \begin{tikzpicture}[line width=1,scale=.75,baseline=0]
\draw (0,-1)--(0,0) \br (1,1.5);
\draw[white, line width=10] (1,0) \br (0, 1.5);
\draw (1,-1)--(1,0) \br (0, 1.5);
\begin{scope}[yshift=-.35cm]
\draw (0,0)--(0,.35);
\begin{scope}[xshift=1cm]
\draw[fill=white] (-.35,-.35) rectangle node {$\pi$} (.35,.35);
\end{scope}
\end{scope}
\draw[looseness=2] (1,1.5) to [out=90, in =90] (2,1.5)--(2,-1) to [out=-90, in=-90] (1,-1);
\draw[looseness=2] (0,1.5) to [out=90, in =90] (-1,1.5)--(-1,-1) to [out=-90, in=-90] (0,-1);
\draw[->] (-1,.25) node[left] {$X_g$};
\end{tikzpicture}.
\end{align}
\end{definition}

\begin{remark}
There are a number of different conventions for $G$-crossed modular data used here and elsewhere in the literature. These are mostly cosmetic: orientations of diagrams, whether it is the overstrand or understand of a braid that gets acted on in a $G$-crossed braiding, which component is labeled by $g$ and which by $h$, etc. There also seems to be a split between which of $\oplus_{X_g \in \Irr(\cC_g)} \Hom(X_g^{h^{-1}},X_g)$ and $\oplus_{Y_h \in \Irr(\cC_h)} \Hom(Y_h^g,Y_h)$ is viewed as the source and which as the target of $S^{(g,h)}$. In Definition \ref{def:gcrossedmodulardata} we have largely followed the conventions in \cite{BBCW}.
\end{remark}
In the following examples $G$ is cyclic and we arrange the blocks of the $S$-matrix with respect to the lexicographic order on $G \cong \Z/N\Z \times \Z/N\Z$ so that the $(g,h)$-row and $(h,g^{-1})$-column of $S$ is $S^{(g,h)}$.

\begin{example}
\label{ex:Z3ST}
As a special case of the $G$-crossed braided fusion categories $\Vect_G^{\omega}$ in Example \ref{ex:vecgomega}, when the group $G=A$ is abelian the categories are parametrized by $\Vect_A^{q}$ where $q$ is a quadratic form. Take for example $A=\Z/3\Z$. The $\Z/3\Z$-crossed $S$-matrix and twists for $\Vect^{q}_{\Z/3\Z}$ is given by

\begin{align*}S_{\Vect^{q}_{\Z/3\Z}} =\left(
\begin{array}{ccccccccc}
 1 & 0 & 0 & 0 & 0 & 0 & 0 & 0 & 0 \\
 0 & 0 & 0 & 1 & 0 & 0 & 0 & 0 & 0 \\
 0 & 0 & 0 & 0 & 0 & 0 & 1 & 0 & 0 \\
 0 & 0 & 1 & 0 & 0 & 0 & 0 & 0 & 0 \\
 0 & 0 & 0 & 0 & 0 & q^2 & 0 & 0 & 0 \\
 0 & 0 & 0 & 0 & 0 & 0 & 0 & 0 & q \\
 0 & 1 & 0 & 0 & 0 & 0 & 0 & 0 & 0 \\
 0 & 0 & 0 & 0 & q & 0 & 0 & 0 & 0 \\
 0 & 0 & 0 & 0 & 0 & 0 & 0 & q^2 & 0 \\
\end{array}\right) 
\quad
\theta_0 = 1, \theta_1=\theta_2=q
\end{align*}

Different choices of $q$ yield inequivalent braided fusion categories: when $q \equiv 1$ $\Vect_{\Z/3\Z}$ is a symmetric fusion category, while when $q(a)= e^{2\pi ia/3}$ for $a \in {1,2}$ it is modular. On the other hand, $\Vect_{\Z/3\Z} \simeq \Vect^{q}_{\Z/3\Z}$ as $\Z/3\Z$-crossed braided fusion categories; one can check that they generate equivalent $\Z/3\Z$-equivariantizations. Observe however that the $\Z/3\Z$-crossed $S$-matrices for different values of $q$ are not equal up to a relabeling. 
\end{example}

\begin{example}
\label{ex:vecgomegaST}
Consider $\Vect_G^{\omega}$ as a $G$-crossed braided fusion category as in Example \ref{ex:vecgomega} where $G$ acts by conjugation and the $G$-crossed braiding $c_{g,h}: g \otimes h \to h \otimes h^{-1}gh$ is the identity. The block structures of the $G$-crossed $S$- and $T$-matrices do not admit a uniform description since they depend on the multiplication in $G$, but for commuting pairs $(g,h)$ one has

\begin{align*}
    S^{(g,h)}_{g,h} = \frac{1}{\sqrt{|G|}}, \quad &\quad  \theta_g= 1. 
\end{align*}
\end{example}

\begin{example}
\label{ex:TYmodulardata}
Consider the Tambara-Yamagami category $\TY(A,\chi,\tau)$ from Example \ref{ex:TY} with the strict $\langle g \rangle = \mathbb{Z}/2\mathbb{Z}$-action that sends $T_g(a)=-a$ and $T_g(m)=m$. We have

\begin{align*} S = \left(\begin{array}{c|c|c|c}
S^{(0,0)} & \Large{0} & \Large{0} & \Large{0} \\
\hline
 \Large{0} & \Large{0} & S^{(g,0)} & \Large{0} \\
\hline
 \Large{0} & S^{(0,g)} & \Large{0} & \Large{0} \\
 \hline
  \Large{0} & \Large{0} & \Large{0} & S^{(g,g)}
\end{array}\right)
,\quad
\theta_{a} = \frac{\chi(a,a)}{d_a}, \quad & \quad\theta_{m}= \frac{\alpha}{\sqrt{|A|}} \sum_{a \in A}q(a)^{-1}d_a
\end{align*}
where 
$S^{(0,0)}_{a,b}=\frac{d_ad_b}{\sqrt{|A|}}\chi(-a,b)\chi(b,-a)$, $S^{(0,g)}_{a,m}=S^{(g,0)}_{m,a}=\frac{q(a)^2d_m}{\sqrt{|A|}}$ for $a=-a$, and $S^{(g,g)}_{m,m}=\frac{\alpha^2}{\sqrt{|A|}}(\sum_{a \in A}q(a)^{-1})^2d_a$.
\end{example}

\subsection{Zested \texorpdfstring{$G$}{G}-crossed modular data}
\label{sec:zestedmodulardata}

In \cite[Theorem 5.7]{DGPRZ} we gave formulas for the modular data of a zested ribbon fusion category $\cC^{(\lambda, \nu, t,j,f)}$ in terms of the original modular data of $\cC$ and the ribbon zesting data $(\lambda, \nu, t, j, f)$. Naturally, one wants analogous formulas for the zested $G$-crossed modular data. However, it ends up being subtle to compute the $G$-crossed $S$-matrix of the zested category, as the following discussion shows.

\subsubsection{Zesting does not preserve the fixed points of the \texorpdfstring{$G$}{G}-action on \texorpdfstring{$\Irr(\cC)$}{Irr(C)}}

Consider the formula for $S^{(g,h)}_{X_g,Y_h}$ in Definition \ref{def:gcrossedmodulardata} above for $X_g$ and $Y_h$ with $X_g^{h} \cong X_g$ and $Y_h^g \cong Y_h$. In $\cC^{(\lambda,\nu)}$ it is no longer necessarily true that
\begin{align*} & \quad 
X_g^{h^{\lambda}} = X_g^{h} \otimes  \lambda(h,g) \otimes \lambda(g,h)^*
\text{ and } \quad
(Y_h)^{g^{\lambda}} = (Y_h)^{g} \otimes \lambda(g,h) \otimes \lambda(h,g)^* 
 \end{align*}
 are isomorphic to $X_g$ and $Y_h$ respectively for commuting $g,h$ unless there are isomorphisms $\lambda(g,h) \cong \lambda(h,g)$. In other words, unlike in the braided case it is not clear how to meaningfully compare entries of the $G$-crossed $S$-matrix before and after zesting: with respect to a fixed ordering of $G$ and $\Irr(\cC)$, the $G$-crossed $S$-matrix of $\cC^{(\lambda,\nu)}$ from Equation \ref{eq:GcrossedS} may not even have the same block structure as that of $\cC$ since there is no general relationship between $\Hom_{\cC}(X_g^{h}, X_g)$ and $\Hom_{\cC^{(\lambda,\nu)}}(X_g^{h^\lambda}, X_g)$ and a set of representative isomorphisms $X_g^{h} \cong X_g$ of the basis elements of $\mathcal{V}_{(g,h)}$ does not necessarily extend to a basis of $\mathcal{V}_{(g,h)}^{\lambda,\nu}:= \bigoplus_{X_g \in \Irr(\cC_g)} \Hom_{\cC^{(\lambda,\nu)}}(X_g^{h^\lambda},X_g)$.
 
\subsubsection{A zested \texorpdfstring{$G$}{G}-crossed \texorpdfstring{$S$}{S}-matrix and twists} 

However, one still has $(X_g^*)^{h^{\lambda}} \otimes Y_h^{(g^{-1})^{\lambda}} \cong X_g^* \otimes Y_h$. Define 
$\tau_{g,h}: (X_g^*)^{h^{\lambda}} \otimes Y_h^{(g^{-1})^{\lambda}} \to X_g^* \otimes Y_h$ by

\begin{align*}
\begin{tikzpicture}[scale=1.25,line width=1]
\draw (0,-1.5) node[below] {$X_g$}-- (0,2) node[above] {$(X_g^*)^h$};
\draw[fill=white] (-.4,-.4 ) rectangle node {$\phi$}(.4,.4) node[right] {$*$} ;
\draw[looseness=1.5] (3,2) node[above] {$\lambda(g^{-1},h)^*$} to [out=-90,in=-90] (6,2) node[above] {$\lambda(g^{-1},h)$};
\draw[looseness=1.5] (1.5,2) node[above] {$\lambda(h,g^{-1})$} to [out=-90,in=-90] (7.5,2) node[above] {$\lambda(h,g^{-1})^*$};
\draw[white, line width=10] (4.5,-1.5) -- (4.5,2);
\draw (4.5,-1.5) node[below] {$Y_h$}-- (4.5,2) node[above] {$(Y_h)^g$};
\draw[fill=white] (4.1,-.4 ) rectangle node {$\psi$}(4.9,.4) ;
\end{tikzpicture}
\end{align*}
where $\phi:X_g^{h} \to X_g$ and $\psi:Y_h^g \to Y_h$ are as in Definition \ref{def:gcrossedmodulardata}. 

Then $(\tilde{S}^{(\lambda,\nu)})^{(g,h)}_{X_g,Y_h}:= \Tr^{(\lambda,\nu)}\left (\tau_{g,h} \circ c_{Y_h, (X_g^*)^{h^{\lambda}}}\circ c_{X_g^*, Y_h} \right)$ is well-defined and satisfies
\begin{align}
(\tilde{S}^{\lambda,\nu})^{(g,h)}_{X_g,Y_h}= \frac{1}{D_e} &  
\begin{tikzpicture}[line width=1,scale=.5,baseline=0]
\draw (11,-4.5)--(11,3)  to [out=90, in=90] (14,3)--(14,-4.5);
\draw (1,-3) \br (2,1.5) -- (2,3) ;
\draw (0,-3) \br (1,1.5);
\draw[looseness=1.5] (2, -3) \br (3,1.5) to [out=90, in=90] (4,1.5)--(4,-3);
\draw[white, line width=10] (3,-3) \br (0, 1.5);
\draw (3,-3) \br (0, 1.5);
\begin{scope}[yshift=1.5cm]
\draw (0,0) \br (1,1.5);
\draw[white, line width=10] (1,0) \br (0, 1.5);
\draw (1,0) \br (0, 1.5);
\end{scope}
\draw[looseness=1.5] (5,-3) to [out=90, in=90] (6,-3);
\foreach \x in {0,1,2,3,4,5,6,8,9,10}{
\draw (\x,-3)-- (\x,-4.5);}
\draw[white, line width=10, looseness=.5] (-1,-4.5) to [out=90, in=90] (7,-4.5);
\draw[looseness=.5] (-1,-4.5) to [out=90, in=90] (7,-4.5);
\draw (0,3) to [out=90,in=90] (10,3)--(10,-3);
\draw (1,3) to [out=90,in=90] (9,3)--(9,-3);
\draw (2,3) to [out=90,in=90] (8,3)--(8,-3);
\draw (2,-4.5) to [out=-90,in=-90] (4,-4.5);
\draw (1,-4.5) to [out=-90,in=-90] (5,-4.5);
\draw[white, line width=10] (3,-4.5) to [out=-90, in=-90] (9,-4.5);
\draw (3,-4.5) to [out=-90, in=-90] (9,-4.5);
\draw (0,-4.5)  to [out=-90, in=-90] (10,-4.5);
\draw (-1,-4.5)  to [out=-90, in=-90] (11,-4.5);
\draw (6,-4.5) to [out=-90, in=-90] (8,-4.5);
\draw[white, line width=10] (7,-4.5) to [out=-90, in=-90] (14,-4.5);
\draw(7,-4.5) to [out=-90, in=-90] (14,-4.5);
\draw[fill=white] (13.5,-4.5) rectangle node {\small $\nu^k$} (14.5,-3.5) node[right] {$^{-1}$};
\draw[fill=white] (-1.5,-5.5) rectangle node {\small $\nu^k$} (-.5,-4.5);
\draw [fill=white] (-.33, -3) rectangle node {\small$\phi$} (.33,-2.33);
\draw (0.15,-2.33) node[right] {$^*$};
\draw [fill=white] (3-.33, -3) rectangle node {\small $\psi$} (3.33,-2.33);
\end{tikzpicture} 
=& 
S^{(g,h)}_{X_g,Y_h} \theta_{\lambda(k^{-1},k)}
\end{align}

where $k = h^{-1}g$ and we have abbreviated the isomorphisms $\nu^k:= \nu_{k, k^{-1},k}$ to save space.

The situation is not quite so bad with the twists, and one can check that there is a basis on $\mathcal{V}_{g,g}^{(\lambda,\nu)}$ given by $\pi \otimes \text{eval}_{\lambda(g,g)}$ 
\begin{align*}
\begin{tikzpicture}[scale=1, line width=1]
\onebox{X_g}{X_g^{g}}{\pi}
\draw[looseness=1.5] (1.5,2) node[above] {$\lambda(g,g)$} to [out=-90,in=-90] (3,2) node[above] {$\lambda(g,g)^*$};
\end{tikzpicture}
\end{align*}
that one can use to directly compare the twists before and after zesting. 

\begin{align}
\tilde{\theta}^{\lambda,\nu}_{X_g}= \frac{1}{\dim(X_g)}&
\begin{tikzpicture}[line width=1,scale=.5,baseline=0]
\draw (9,-1.5)--(9,3)  to [out=90, in=90] (11,3)--(11,-1.5);
\draw (2,1.5) -- (2,3) ;
\begin{scope}[xshift=1cm]
\onebox{}{}{\pi}
\draw[looseness=1.5] (1,1.5) node[above] {} to [out=-90,in=-90] (2,1.5) node[above] {};
\draw[looseness=1.5] (2,1.5) node[above] {} to [out=90,in=90] (3,1.5) node[above] {};
    \end{scope}
\begin{scope}[yshift=1.5cm]
\draw (0,-1.5)--(0,0) \br (1,1.5);
\draw[white, line width=10] (1,0) \br (0, 1.5);
\draw (1,0) \br (0, 1.5);
\end{scope}
\draw[looseness=1.5] (5,-3) to [out=90, in=90] (6,-3);
\foreach \x in {0,1,4}{
\draw (\x,0)-- (\x,-1.5);}
\draw (4,0)--(4,1.5);
\draw[white, line width=10, looseness=.9] (-1,-1.5) to [out=90, in=90] (5,-1.5);
\draw[looseness=.9] (-1,-1.5) to [out=90, in=90] (5,-1.5);
\draw (0,3) to [out=90,in=90] (8,3)--(8,-1.5);
\draw (1,3) to [out=90,in=90] (7,3)--(7,-1.5);
\draw (2,3) to [out=90,in=90] (6,3)--(6,-1.5);
\draw[white, line width=10] (4,-1.5) to [out=-90, in=-90] (9,-1.5);
\draw (4,-1.5) to [out=-90, in=-90] (6,-1.5);
\draw (0,-1.5)  to [out=-90, in=-90] (8,-1.5);
\draw (1,-1.5) to [out=-90, in=-90] (7,-1.5);
\draw (-1,-1.5)  to [out=-90, in=-90] (9,-1.5);
\draw[white, line width=10] (5,-1.5) to [out=-90, in=-90] (11,-1.5);
\draw(5,-1.5) to [out=-90, in=-90] (11,-1.5);
\draw[fill=white] (9.4,-3.6) rectangle node {\tiny $g^{-2}$} (10.6,-2.4) node[right] {$^{-1}$};
\draw[fill=white] (1.9,-0.6) rectangle node {\tiny $g^{-2}$} (3.1,.6);
\end{tikzpicture} 
= \theta_{X_g} \theta_{\lambda(g^2,g^{-2})}
\end{align}

One could think of $\tilde{S}$ and $\tilde{\theta}$ as ``zested $G$-crossed modular data" for $\cC^{(\lambda,\nu)}$ and compare it directly to that of $\cC$, although one must keep in mind that neither are invariants and that it may differ from the $G$-crossed modular data of $\cC^{(\lambda,\nu)}$ computed directly from Definition \ref{def:gcrossedmodulardata}. 

\bibliography{refs}{}
\bibliographystyle{plain}
\end{document}